\documentclass[a4paper]{amsart}

\usepackage{amssymb, amsmath, amsfonts, stmaryrd}
\usepackage[dvips]{graphicx}
\usepackage[dvips]{color}
\usepackage[all]{xy}
\usepackage[noend]{algorithmic}
\usepackage[ruled]{algorithm}
\mathchardef\ordinarycolon\mathcode`\:
\mathcode`\:=\string"8000
\begingroup \catcode`\:=\active
  \gdef:{\mathrel{\mathop\ordinarycolon}}
\endgroup

\newcommand{\mfm}{\mathfrak{m}}
\newcommand{\mfn}{\mathfrak{n}}
\newcommand{\mfo}{\mathfrak{o}}
\newcommand{\mfc}{\mathfrak{c}}

\newcommand{\xu}{\underline{x}}
\newcommand{\cle}{\preccurlyeq}
\newcommand{\cge}{\succcurlyeq}
\newcommand{\cgt}{\succ}

\newcommand{\ic}{\operatorname{IC}}
\newcommand{\PP}{\mathcal{P}}
\renewcommand{\ll}{\llbracket}
\newcommand{\rr}{\rrbracket}
\newcommand{\lp}{\left(\!\left(}
\newcommand{\rp}{\right)\!\right)}
\newcommand{\lt}{\operatorname{it}}

\newcommand{\Z}{\mathbb{Z}}
\newcommand{\Q}{\mathbb{Q}}
\newcommand{\R}{\mathbb{R}}
\newcommand{\E}{\mathbb{E}}
\newcommand{\F}{\mathbb{F}}

\renewcommand{\P}{\mathbb{P}}

\newcommand{\OO}{\mathcal{O}}
\newcommand{\FF}{\mathcal{F}}
\newcommand{\mm}{\mathfrak{m}}
\renewcommand{\SS}{\mathcal{S}}

\newcommand{\into}{\hookrightarrow}

\newcommand{\ord}{\operatorname{ord}}
\newcommand{\spec}{\operatorname{Spec}}
\newcommand{\proj}{\operatorname{Proj}}
\newcommand{\cent}{\operatorname{center}}
\newcommand{\supp}{\operatorname{supp}}
\newcommand{\edge}{\operatorname{edge}}
\newcommand{\disc}{\operatorname{disc}}

\newcommand{\aut}{\operatorname{Aut}}
\newcommand{\lift}[1]{{}^{\uparrow #1}}
\newcommand{\totfrac}{\operatorname{QF}}
\newcommand{\intclos}[1]{\operatorname{IC}(#1)}
\newcommand{\ratto}{\dashrightarrow}
\newcommand{\map}{\textbf{map}}
\newcommand{\return}{\textbf{return}}

\numberwithin{equation}{section}
\theoremstyle{plain}
\newtheorem{thm}{Theorem}[section]
\newtheorem{lem}[thm]{Lemma}
\newtheorem{cor}[thm]{Corollary}
\newtheorem{cnd}[thm]{Condition}
\theoremstyle{definition}
\newtheorem{dfn}[thm]{Definition}
\theoremstyle{remark}
\newtheorem{rem}[thm]{Remark}
\newtheorem{ex}[thm]{Example}
\newcounter{runexhlp}
\newenvironment{exrun}[1]{%
\newcounter{#1}\setcounter{#1}{\value{thm}}%
\begin{ex}\label{#1}}
{%
\end{ex}}
\newenvironment{excont}[1]{%
\setcounter{runexhlp}{\value{thm}}%
\setcounter{thm}{\value{#1}}%
\begin{ex}[continued]}
{%
\setcounter{thm}{\value{runexhlp}}%
\end{ex}}

\begin{document}

\title[Formal Desingularization of Surfaces]{Formal Desingularization of Surfaces\\-- The Jung Method Revisited --}

\author{Tobias Beck}
\address{Tobias Beck, Symbolic Computation Group, Johann Radon Institute for Computational and Applied Mathematics, Austrian Academy of Sciences, Altenbergerstra{\ss}e 69, A-4020 Linz, Austria}
\email{Tobias.Beck@ricam.oeaw.ac.at}
\thanks{The author was supported by the FWF (Austrian Science Fund) in the frame of the research project SFB F1303 (part of the  Special Research Program ``Numerical and Symbolic Scientific Computing'').}

\keywords{resolution of singularities, algebraic power series, quasi-ordinary polynomials}
\subjclass[2000]{Primary: 14Q10, 14J17, 13J05}
\date{\today}

\begin{abstract}
In this paper we propose the concept of \emph{formal desingularizations} as a substitute for the resolution of algebraic varieties. Though a usual resolution of algebraic varieties provides more information on the structure of singularities there is evidence that the weaker concept is enough for many computational purposes. We give a detailed study of the Jung method and show how it facilitates an efficient computation of formal desingularizations for projective surfaces over a field of characteristic zero, not necessarily algebraically closed. The paper includes a generalization of Duval's Theorem on rational Puiseux parametrizations to the multivariate case and a detailed description of a system for multivariate algebraic power series computations.
\end{abstract}

\maketitle
\tableofcontents


\section{Introduction}

Smooth varieties are (in general) well-understood. By contrast (or simply because of that) the objects of interest are often singular varieties. From the theoretical point of view, a remedy for this situation is the celebrated Theorem of Hironaka \cite{HH:1964} (or \cite{HH:2003} for a more modern treatment) on the resolution of singularities which is ubiquitous in algebraic geometry: If $X$ is a variety over a field of characteristic zero, then there always exists a smooth variety $Y$ and a proper birational morphism $\pi: Y \to X$. So for proving theorems and defining birational invariants, one can often argue on $Y$ rather than on $X$ and finally transfer the result back to the singular variety. This theorem has been made constructive by Villamayor \cite{OV:1989}, Bierstone-Milman \cite{EB_PM:1991} and others. There are also two implementations of the desingularization algorithm in \texttt{Singular} \cite{GMG_GP:2002}, one by Bodn{\'a}r and Schicho \cite{GB_JS:2000} and another one by Fr{\"u}bis-Kr{\"u}ger \cite{FK_GP:2004}. In principal this makes many theoretical results algorithmic, but any algorithm relying on a desingularization suffers from the high computational complexity of the desingularization process. There are also specialized constructive methods for the surface case (for a list see \cite{VC_JG_UO:1984} and \cite[Chp.\ 2]{JK:2007}), in particular, the Method of Jung which originates in \cite{HWEJ:1908} and has been further developed in \cite{RW:1935} and \cite{FH:1953}. But -- to the best of our knowledge -- there are no implementations available.

From the computational point of view, it is not always necessary to describe a desingularization completely. In the case of algebraic curves over a field $\E$, series expansions have proven to be an important algorithmic tool. Here the preimage of the singular locus w.r.t.\ a desingularization is a finite set of points. The idea is to describe the desingularization by power series expansions that determine ``formal neighborhoods'' of these points. If $\E$ has characteristic zero, Puiseux expansions can be used, and the Newton-Puiseux algorithm is implemented in many computer algebra systems including {\tt Magma} \cite{MAGMA}, \texttt{Maple} and \texttt{Singular}. The latter system also has an implementation of Hamburger-Noether expansions \cite{AC_JIF:2002} that provides a similar tool for positive characteristic. Applications are for example the computation of an integral basis of the function field \cite{MvH:1994} or Riemann-Roch spaces of divisors \cite{GH_DLB:1995}.

The purpose of this paper is to provide a similar tool for hypersurfaces of $\P_{\E}^3$, where $\E$ is a field of characteristic zero. We emphasize algorithmic aspects and proceed as follows:
In Section~\ref{sec.Concept} we define formal desingularizations for schemes of arbitrary dimension. They can be interpreted as sufficiently large sets of local parametrizations by formal maps. Formal desingularization offer a lot of flexibility because during computation one can always to switch to formally isomorphic schemes.
Then in Section~\ref{sec.Jung} we show how to compute them for surfaces using the method of Jung that depends crucially on the Theorem of Jung-Abhyankar. We define and use rational Puiseux parametrizations whose existence and computability we assume for that moment. We also give a description of the algorithm in mathematical pseudo-code.
Finally Section~\ref{sec.Series} shows in detail how to implement a system that represents and computes with multivariate algebraic power series in {\tt Magma}. (Locally smooth systems, as proposed in \cite{MA_TM_MR:1992}, were insufficient from the complexity point of view.) Folklore knew that the concept of rational Puiseux parametrizations introduced by Duval \cite{DD:1989} should be extensible to multivariate quasi-ordinary polynomials. We give a new and more elementary proof of that fact and show how to compute parametrizations using our representation.
We end with an open problem and an outlook in Section~\ref{sec.Conclusion}. In a short appendix we collect results from local commutative algebra for reference.

Before we proceed we recall and fix some notions. Let $\E$ be a field of characteristic zero and $X$ and $Y$ \emph{integral $\E$-schemes}. By $\E(X)$ and $\E(Y)$ we denote the respective function fields. A \emph{rational map} $\pi: Y \ratto X$ is given by a tuple $(V,\pi)$ s.t.\ $V \subseteq Y$ is open and $\pi: V \to X$ is a regular morphism. Note that we do not restrict to schemes of finite type here. In particular all regular morphisms are rational maps. Two tuples $(V_1,\pi_{1})$ and $(V_2,\pi_{2})$ are \emph{equivalent}, or define the same rational map, if $\pi_{1}|_{V_1 \cap V_2} = \pi_{2}|_{V_1 \cap V_2}$.

Assume that two maps send the generic point of $Y$ to $p \in X$ (its image is always defined for rational maps). Then $(V_1,\pi_{1})$ and $(V_2,\pi_{2})$ are equivalent iff the induced inclusions of fields $\OO_{X,p}/\mm_{X,p} \into \E(Y)$ are the same (where $\mm_{X,p} \subset \OO_{X,p}$ is the maximal ideal). In particular if $\pi$ is \emph{dense}, {\it i.e.}, $p$ is the generic point of $X$, we get an inclusion $\E(X) \into \E(Y)$ determining $\pi$.

Note, however, that not all such field inclusions yield rational maps under our assumption since we have not yet restricted to schemes of finite type over $\E$. {\it E.g.}, let $X := \spec \E[x]$, $Y := \spec \E[x]_{\langle x \rangle}$ and $\pi: Y \to X$ be the morphism induced by localization. Then $\pi$ induces an isomorphism of function fields $\E(X) \cong \E(Y)$. Nevertheless $\pi$ has no rational inverse. A rational map with inverse is called \emph{birational} (or also a \emph{birational transformation}).

Further it is easy to see that dense rational maps may be composed. A rational map has a \emph{domain of definition}, which is the maximal open set on which it can be defined (equivalently, the union of all such open sets).


\section{Formal Desingularizations}\label{sec.Concept}

For this section we denote by $X$ and $Y$ \emph{separated, integral schemes of finite type over $\E$}. Further we assume that both are of \emph{dimension $n$}. All (rational) maps will be relative over $\E$.

Let $(A,\mathfrak{m})$ be a valuation ring of $\E(X)$ over $\E$ (where $\mathfrak{m}$ is the maximal ideal). If $A$ is discrete of rank $1$ and the transcendence degree of $A/\mathfrak{m}$ over $\E$ is $n-1$ then it is called a \emph{divisorial valuation ring of $\E(X)$ over $\E$} or a \emph{prime divisor of $\E(X)$} (see, {\it e.g.}, \cite[Def.~2.6]{MS:1990}). It is an \emph{essentially finite}, regular, local $\E$-algebra of Krull-dimension $1$ ({\it i.e.}, the localization of a finitely generated $\E$-algebra at a prime ideal, see \cite[Thm.~VI.14.31]{OZ_PS:1960}).

Let $(A,\mathfrak{m})$ be a divisorial valuation ring of $\E(X)$ over $\E$. By \cite[Lem.~II.4.4.]{RH:1977} the inclusion $A \subset \E(X)$ defines a unique morphism $\spec \totfrac(A) \to X$ and therefore a rational map $\spec A \ratto X$ sending generic point to generic point. Composing this with the morphism obtained by the $\mathfrak{m}$-adic completion $A \to \widehat{A}$ we get a rational map $\spec \widehat{A} \ratto X$.

\begin{dfn}[Formal Prime Divisor]\label{dfn.FormalDiv}
Let $(A,\mathfrak{m})$ be a divisorial valuation ring of $\E(X)$ over $\E$. Assume that the rational map $\spec \widehat{A} \ratto X$ (as above) is actually a \emph{morphism} $\varphi: \spec \widehat{A} \to X$ ({\it i.e.}, defined also at the closed point). Then $\varphi$ is a representative for a class of schemes up to $X$-isomorphism. This class (and, by abuse of notation, any representative) will be called a \emph{formal prime divisor} on $X$.
\end{dfn}

Hence we may compose a representative $\varphi$ with an isomorphism $\spec B \to \spec \widehat{A}$ to get another representative for the same formal prime divisor. We have an isomorphism $\widehat{A} \cong \F_\varphi\ll t\rr$ with $\F_\varphi := \widehat{A}/\mathfrak{m}\widehat{A} \cong A / \mathfrak{m}$. In the sequel we will sometimes assume that $\widehat{A}$ is already of this form, {\it i.e.}, $\varphi: \spec \F_\varphi\ll t\rr \to X$. The isomorphism is an instance of \emph{Cohen's Structure Theorem} for regular rings, see Theorem~\ref{thm.Cohen}.

Formal prime divisors provide an algorithmic way for dealing with divisorial valuations; A formal prime divisor yields an inclusion of function fields $\E(X) \into \F_\varphi\lp t \rp$. Vice versa, by what was said above, $\varphi$ is determined by this inclusion. So it is this piece of information one has to represent. Composing this inclusion with the order function $\ord_t: \F_\varphi\lp t \rp \to \Z$ we get the corresponding divisorial valuation.

We want to single out a special class of formal prime divisors.

\begin{dfn}[Realized Formal Prime Divisors]\label{dfn.RealizedDiv}
Let $p \in X$ be a regular point of codimension $1$. The formal prime divisor $$\spec \widehat{\OO_{X,p}} \to X$$ (given by composing the canonic morphism $\spec \OO_{X,p} \to X$ with the morphism induced by the completion $\OO_{X,p} \to \widehat{\OO_{X,p}}$) is called \emph{realized}.
\end{dfn}

If $X$ is \emph{normal} then all generic points of closed subsets of codimension $1$ are necessarily regular \cite[Thm~II.8.22A]{RH:1977}. Therefore there is a one-one correspondence of realized formal prime divisors and prime Weil divisors. Another important fact is that we can compare formal prime divisors of birationally equivalent schemes under certain conditions.

\begin{lem}[Pullback of Formal Prime Divisors]\label{lem.DivPullback}
Let $X$ and $Y$ be $S$-schemes with structure morphisms $\rho_X: X \to S$, $\rho_Y: Y \to S$ and assume that $\rho_Y$ is \emph{proper}. Let $\pi: Y \ratto X$ be a \emph{birational transformation} of $S$-schemes (meaning that $\rho_Y = \rho_X  \pi$ as rational maps).

Then each formal prime divisor $\varphi: \spec \widehat{A} \to X$ lifts uniquely to a formal prime divisor on $Y$, {\it i.e.}, there is a unique formal prime divisor $\pi^* \varphi: \spec \widehat{A} \to Y$ s.t.\ $\pi  (\pi^* \varphi) = \varphi$ as rational maps.
\end{lem}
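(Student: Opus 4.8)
The plan is to exploit the valuative criterion of properness together with the field-theoretic characterization of rational maps recalled in the introduction. Write $A \subset \E(X)$ for the divisorial valuation ring underlying $\varphi$ and recall that $\varphi$ records, in particular, the induced inclusion $\E(X) \into \F_\varphi\lp t \rp$. Since $\pi: Y \ratto X$ is birational it induces an isomorphism $\E(X) \xrightarrow{\ \sim\ } \E(Y)$ of $S$-fields, so composing we obtain an inclusion $\E(Y) \into \F_\varphi\lp t \rp$, hence a rational map $\spec \F_\varphi\lp t \rp \ratto Y$ sending generic point to generic point and compatible over $S$. The claim is that (a) this extends to a morphism $\spec \F_\varphi\ll t\rr \to Y$, i.e.\ is defined at the closed point, and that (b) doing so yields a formal prime divisor on $Y$ over $\varphi$, and (c) that this lift is unique.

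First I would establish existence of the lift. The rational map $\spec \F_\varphi\lp t \rp \ratto Y$ together with the valuation ring $\F_\varphi\ll t\rr \subset \F_\varphi\lp t \rp$ gives the data of a commutative square
\[
\begin{CD}
\spec \F_\varphi\lp t \rp @>>> Y \\
@VVV @VV{\rho_Y}V \\
\spec \F_\varphi\ll t\rr @>>> S
\end{CD}
\]
where the bottom arrow is $\rho_X \circ \varphi$ (using that $\varphi$ itself is a morphism on all of $\spec\widehat A$, by hypothesis, and that $\pi$ is an $S$-map). Since $\F_\varphi\ll t\rr$ is a (complete discrete, hence) valuation ring and $\rho_Y$ is proper, the valuative criterion of properness (in the form for not-necessarily-Noetherian valuation rings, or simply because $\F_\varphi\ll t\rr$ is Noetherian here) yields a \emph{unique} morphism $\psi: \spec \F_\varphi\ll t\rr \to Y$ filling in the diagonal. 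Uniqueness of the diagonal is exactly assertion (c): any two lifts of $\varphi$ agree over $S$ and restrict to the same map on the generic point, hence coincide by the criterion.

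It remains to check (b): that $\psi$, or rather $\psi$ precomposed with the fixed isomorphism $\spec\widehat A \cong \spec \F_\varphi\ll t\rr$, genuinely represents a formal prime divisor on $Y$ in the sense of Definition~\ref{dfn.FormalDiv}. Here one must see that the valuation ring $B \subset \E(Y)$ obtained by pulling $A$ across the isomorphism $\E(X) \cong \E(Y)$ is again a divisorial valuation ring of $\E(Y)$ over $\E$: it is a rank-one discrete valuation ring by transport of structure, and the residue field and its transcendence degree over $\E$ are unchanged, so it is divisorial. Then $\widehat B \cong \F_\varphi\ll t\rr$ and the morphism $\spec\widehat B \ratto Y$ attached to $B$ as in the paragraph preceding Definition~\ref{dfn.FormalDiv} coincides with $\psi$ on the generic point, hence (being a morphism) equals $\psi$; in particular it is defined at the closed point, so $B$ defines a formal prime divisor $\pi^*\varphi$ on $Y$, and $\pi \circ (\pi^*\varphi) = \varphi$ as rational maps by construction. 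Finally $\F_{\pi^*\varphi} = \F_\varphi$, so nothing is lost.

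The main obstacle is the bookkeeping around item (b) — ensuring that the diagonal morphism produced abstractly by the valuative criterion is literally the canonical morphism associated to the transported valuation ring $B$, rather than merely ``a'' morphism with the right generic fibre. The cleanest way to dispatch this is to note that both maps $\spec\widehat B \to Y$ are $S$-morphisms agreeing on the generic point, and then invoke the uniqueness half of the valuative criterion once more (applied now to $\rho_Y$ proper and the valuation ring $\widehat B$) to conclude they are equal; this simultaneously recovers uniqueness of $\pi^*\varphi$ as a formal prime divisor and closes the argument.
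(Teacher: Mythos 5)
Your proof is correct and follows essentially the same route as the paper: both set up the valuative-criterion diagram for $\rho_Y$ with the discrete valuation ring $\widehat{A}$ (equivalently $\F_\varphi\ll t\rr$) as the test ring and invoke properness to get a unique filling morphism $\psi\colon \spec\widehat A \to Y$, which by the field-theoretic determination of rational maps must be the lift. You spell out point (b) — that the lift is genuinely a formal prime divisor on $Y$, by transporting the valuation ring across $\E(X)\cong\E(Y)$ — a little more explicitly than the paper does, but this is a minor expository difference, not a different argument.
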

\begin{proof}
Consider the commuting diagram\\
\centerline{\xymatrix{%
\spec \totfrac(\widehat{A}) \ar[r] \ar[d] & \spec \E(X) \ar[rr] \ar[d] & & \spec \E(Y) \ar[d] \\
\spec \widehat{A} \ar[r]^{\varphi} & X \ar[dr]_{\rho_X} \ar@{-->}[rr]^{\pi^{-1}} & & Y \ar[dl]^{\rho_Y} \\
& & S &
}}
where the vertical arrows are given by restricting identity maps to the germs at the generic points and the upper arrows are induced by $\varphi$ and $\pi^{-1}$ respectively. The two squares trivially commute and the triangle commutes because $\pi$ was assumed to be a birational transformation of $S$-schemes.

Now assume that a lift $\pi^* \varphi$ as in the claim exists, then we must have $\pi^* \varphi = \pi^{-1}  \varphi$ because $\pi$ is birational. This (a priori only \emph{rational}) map would fit into the following contracted diagram:\\
\centerline{\xymatrix{%
\spec \totfrac(\widehat{A}) \ar[r] \ar[d] & Y \ar[d]^{\rho_Y} \\
\spec \widehat{A} \ar[r]^{\rho_X  \varphi} \ar@{-->}[ur]^{\pi^* \varphi} & S
}}
Now there exists a unique \emph{regular} morphism $\psi: \spec \widehat{A} \to Y$ which fits into this diagram by applying the valuative criterion of properness to $\rho_Y$ (see \cite[Thm.~II.4.7]{RH:1977}). Since a rational map is uniquely determined by the inclusion of function fields we see that $\pi^* \varphi = \psi$.
\end{proof}

\begin{cor}[Pullback along Proper Morphisms]\label{cor.DivPullback}
Let $\pi: Y \to X$ be a proper, birational morphism. A formal prime divisor on $X$ lifts to a unique formal prime divisor on $Y$. Vice versa a formal prime divisor on $Y$ extends to a unique formal prime divisor on $X$, hence $\pi^*$ is a bijection.
\end{cor}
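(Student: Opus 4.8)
The plan is to deduce both directions from Lemma~\ref{lem.DivPullback} by choosing the base scheme $S$ appropriately. For the first assertion, I would take $S := X$ with structure morphisms $\rho_X := \id_X$ and $\rho_Y := \pi$. Since $\pi$ is proper, $\rho_Y$ is proper, and $\pi: Y \ratto X$ is trivially a birational transformation of $X$-schemes because $\rho_X \pi = \pi = \rho_Y$ as rational maps. Lemma~\ref{lem.DivPullback} then applies verbatim and produces, for each formal prime divisor $\varphi: \spec \widehat{A} \to X$, a unique formal prime divisor $\pi^* \varphi: \spec \widehat{A} \to Y$ with $\pi (\pi^* \varphi) = \varphi$ as rational maps. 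This is exactly the ``lift'' statement.

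For the converse, the point is that a proper birational morphism $\pi: Y \to X$ between integral schemes of finite type over $\E$ is automatically dense (it sends generic point to generic point, since it is dominant), so $\pi$ induces an isomorphism of function fields $\E(Y) \cong \E(X)$. Hence given a formal prime divisor $\psi: \spec \widehat{A} \to Y$, composing the underlying rational map $\spec \widehat{A} \ratto Y$ with $\pi$ gives a rational map $\spec \widehat{A} \ratto X$ sending generic point to generic point, and this rational map is the one attached to the valuation ring $A$ viewed inside $\E(X) \cong \E(Y)$ — indeed $A$ is a divisorial valuation ring of $\E(Y)$ over $\E$, and via the function field isomorphism it is likewise a divisorial valuation ring of $\E(X)$ over $\E$. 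To check that this rational map is actually a morphism $\spec \widehat{A} \to X$ (defined at the closed point), I would simply note that it equals $\pi \circ \psi$, and $\psi$ is a morphism while $\pi$ is a morphism, so the composite is a morphism; thus it is a bona fide formal prime divisor on $X$, call it $\pi_* \psi$. Uniqueness of the extension is immediate since a formal prime divisor is determined by the induced inclusion of function fields (as recalled after Definition~\ref{dfn.FormalDiv}), and that inclusion is forced by $\psi$ together with the fixed isomorphism $\E(X) \cong \E(Y)$.

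Finally, to see that $\pi^*$ is a bijection I would check that the two constructions are mutually inverse: $\pi_*(\pi^* \varphi) = \varphi$ and $\pi^*(\pi_* \psi) = \psi$. Both identities follow formally from the fact that all the maps in sight are determined by their effect on function fields, which the relation $\pi (\pi^* \varphi) = \varphi$ (resp.\ $\pi \psi = \pi_* \psi$) pins down, together with uniqueness in Lemma~\ref{lem.DivPullback}. The only genuine subtlety — and the one step I would write out with care — is the verification that $\pi$ being a proper \emph{birational morphism} makes it a birational transformation of $X$-schemes in the precise sense required by Lemma~\ref{lem.DivPullback} (namely $\rho_Y = \rho_X \pi$ as rational maps with $S = X$), and dually that the function-field isomorphism $\E(X) \cong \E(Y)$ it induces is what legitimizes transporting the valuation ring $A$ between the two function fields; everything else is a direct invocation of the lemma and of the uniqueness-by-function-fields principle established in the introduction.
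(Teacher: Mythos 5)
Your proof is correct, and the first half is identical to the paper's: apply Lemma~\ref{lem.DivPullback} over $S := X$ with $\rho_X = \id_X$ and $\rho_Y = \pi$. For the second half the paper simply applies Lemma~\ref{lem.DivPullback} a second time, to $\pi^{-1}: X \ratto Y$ still over $S := X$; the properness hypothesis then lands on $\rho_{X} = \id_X$ and is vacuous, and the valuative criterion trivially singles out $\pi \circ \psi$ as the extension. You instead argue the converse directly: since $\psi$ and $\pi$ are both morphisms, $\pi\circ\psi$ is a morphism, and the isomorphism $\E(X)\cong\E(Y)$ lets you regard the same valuation ring $A$ as a divisorial valuation ring of $\E(X)$, so $\pi\circ\psi$ is the formal prime divisor attached to it. This bypasses the second invocation of the lemma (and of the valuative criterion) entirely and makes explicit why the extension is forced by the function-field-determination principle stated after Definition~\ref{dfn.FormalDiv}. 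Both routes are short; the paper's is more uniform (same lemma twice), yours is marginally more elementary for the $Y \to X$ direction and spells out the check that $\pi_*\psi$ is actually a formal prime divisor, which the one-line proof leaves implicit. Your bijectivity argument via uniqueness in Lemma~\ref{lem.DivPullback} is also sound.
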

\begin{proof}
This is obtained by applying Lemma~\ref{lem.DivPullback} to $\pi$ and $\pi^{-1}$ where $S := X$.
\end{proof}

We will apply the operator $\pi^*$ also to sets of formal prime divisors.

\begin{dfn}[Center and Support]\label{dfn.CenterSupp}
Let $\varphi: \spec \widehat{A} \to X$ be a formal prime divisor. We define its \emph{center}, in symbols $\cent(\varphi)$, to be the image of the closed point. Further the \emph{support} of a \emph{finite} set of formal prime divisors $\SS$ is defined as $\supp_X(\SS) := \overline{\{\cent(\varphi) \mid \varphi \in \SS \}}$, {\it i.e.}, the closure of the set of all centers.
\end{dfn}

\begin{ex}
Let $X := \spec A$ with $A := \Q[x,y,z]/\langle x^2+y^2-z^2 \rangle$ be the cone over the circle and $Y := \spec B$ with $B := \Q[x',y',z']/\langle x'^2+y'^2-1 \rangle$ the cylinder. The strict transform $\pi$ under the blow up of the origin has an affine chart $Y \to X$ given by the homomorphism $A \to B: x \mapsto x'z', y \mapsto y'z', z \mapsto z'$. The generic point of the exceptional divisor in $Y$ is the prime ideal $\langle z' \rangle$. In this case we have a trivial isomorphism $\widehat{B_{\langle z' \rangle}} \cong \totfrac(\Q[x',y']/\langle x'^2+y'^2-1 \rangle)\ll z' \rr$. Then a formal prime divisor is induced by the homomorphism \[A \to \totfrac(\Q[x',y']/\langle x'^2+y'^2-1 \rangle)\ll z' \rr: x \mapsto x'z', y \mapsto y'z', z \mapsto z'.\] We can compose this homomorphism with an arbitrary isomorphism of rings to get another representative of the same formal prime divisor. {\it E.g.}, since the circle is a rational curve we can change the coefficient field by $\totfrac(\Q[x',y']/\langle x'^2+y'^2-1 \rangle) \to \Q(s): x' \mapsto \tfrac{2s}{1+s^2}, y' \mapsto \tfrac{-1+s^2}{1+s^2}$ and map, say, $z' \mapsto t + t^2 + \dots$. Now a formal prime divisor $\varphi: \spec \Q(s)\ll t \rr \to X$ is induced by \[A \to \Q(s)\ll t \rr: x \mapsto \frac{2s(t + t^2 + \dots)}{1+s^2}, y \mapsto \frac{(-1+s^2)(t + t^2 + \dots)}{1+s^2}, z \mapsto t + t^2 + \dots.\] One finds $\cent_X(\varphi) = \langle x,y,z \rangle$ (which is the preimage of the prime ideal $\langle t \rangle$) and by construction we know that $\pi^*\varphi$ is realized with center $\langle z' \rangle$ in $Y$.
\end{ex}

Now we are in the situation to define formal desingularizations.

\begin{dfn}[Formal Description of a Desingularization]\label{dfn.FormalDesc}
Let $\pi: Y \to X$ be a \emph{desingularization}, {\it i.e.}, $\pi$ is proper, birational and $Y$ is regular. Let $\SS$ be a \emph{finite} set of formal prime divisors on $X$. We say that $\SS$ is a \emph{formal description} of $\pi$ iff
\begin{enumerate}
\item all divisors in $\pi^* \SS$ are realized,
\item $\pi^{-1}(\supp_X(\SS)) = \supp_Y(\pi^* \SS)$ and
\item the induced morphism $Y \setminus \supp_Y(\pi^* \SS) \to X \setminus \supp_X(\SS)$ is an isomorphism.
\end{enumerate}
\end{dfn}

The set $\SS$ itself consists of formal prime divisors on $X$ and makes no reference to the morphism $\pi$. By another definition we can \emph{avoid} mentioning an \emph{explicit} $\pi$.

\begin{dfn}[Formal Desingularization]\label{dfn.FormalDesing}
Let $\SS$ be a finite set of formal prime divisors on $X$. Then $\SS$ is called a \emph{formal desingularization} of $X$ iff there \emph{exists some} desingularization $\pi$ s.t.\ $\SS$ is a formal description of it.
\end{dfn}

Informally speaking the set $\SS$ makes it possible to treat divisors on $Y$ effectively, although we haven't explicitly represented $Y$ as a whole; Indeed, realized formal prime divisors correspond bijectively to usual prime divisors on the regular scheme $Y$. The set of formal prime divisors now divides into two classes: those within $\pi^* \SS$ and those with center in $Y \setminus \supp_Y(\pi^* \SS)$. The latter can be dealt with on the isomorphic scheme $X \setminus \supp_X(\SS)$. Therefore formal descriptions are an appropriate algorithmic tool to work with invertible sheaves on $Y$.

In the case of surfaces it is easy to see that the existentially quantified $\pi$ in the above definition is actually unique up to isomorphism. Therefore $\SS$ really identifies a desingularization in the common sense. Vice versa, every desingularization can be described formally by completing the stalks along the exceptional divisors.

\begin{thm}[Uniqueness of Surface Desingularization]\label{thm.UniqueSurfDesing}
Let $\SS$ be a formal desingularization of $X$. If $\pi_1: Y_1 \to X$ and $\pi_2: Y_2 \to X$ are two desingularizations described by $\SS$ (in the sense of Definition~\ref{dfn.FormalDesc}) then $Y_1$ and $Y_2$ are isomorphic as $X$-schemes.
\end{thm}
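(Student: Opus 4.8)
The plan is to show that $Y_1$ and $Y_2$ are birational as $X$-schemes and that the birational map extends to an isomorphism, using the properness of both and the fact that surfaces have no nontrivial birational self-maps away from a divisorial locus once we know where the exceptional sets sit. First I would record the easy structural facts: since $\pi_1$ and $\pi_2$ are proper birational, each induces an isomorphism of function fields $\E(X)\cong\E(Y_i)$, so we get a birational transformation $\psi := \pi_2^{-1}\pi_1: Y_1 \ratto Y_2$ of $X$-schemes. Both $Y_1$ and $Y_2$ are regular separated integral surfaces of finite type over $\E$, and $Y_2$ is proper over $X$ hence over the relevant base; I intend to apply the valuative criterion (as in Lemma~\ref{lem.DivPullback}, with $S := X$) to conclude that $\psi$ is defined in codimension $1$ on $Y_1$, i.e.\ its indeterminacy locus is finite.

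Next I would pin down that $\psi$ is in fact defined everywhere. The key input is condition (3) of Definition~\ref{dfn.FormalDesc}: both $\pi_i$ restrict to isomorphisms over $U := X \setminus \supp_X(\SS)$, so $\psi$ restricts to an isomorphism $\pi_1^{-1}(U) \xrightarrow{\ \sim\ } \pi_2^{-1}(U)$. Hence the indeterminacy locus of $\psi$ is contained in the finite set $\supp_{Y_1}(\pi_1^*\SS)$ (using condition (2), $\pi_1^{-1}(\supp_X\SS) = \supp_{Y_1}(\pi_1^*\SS)$, which for a surface is a finite union of curves and the finitely many singular points of $X$ lying under them; the relevant indeterminacy is at points). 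For a birational map between regular surfaces, any point of indeterminacy forces the map to contract a curve through it, or to be resolved by a blow-up whose exceptional curve maps to a genuine divisor on the target. The clean way to finish is: by the structure theory of birational maps of smooth surfaces (elimination of indeterminacy via blow-ups, \cite[Ch.~II]{RH:1977} / Zariski), if $\psi$ were not a morphism there would be a prime divisor $E$ on some common resolution $Z$ that is exceptional over $Y_1$ but not over $Y_2$ (or vice versa). I would translate this divisor into a formal prime divisor $\varphi$ on $X$ and argue it must lie in $\SS$ or not in $\SS$, deriving a contradiction with conditions (1)–(2) applied symmetrically to $\pi_1$ and $\pi_2$.

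To make that last step precise I would argue as follows. Let $\varphi$ be a formal prime divisor on $X$. By Corollary~\ref{cor.DivPullback}, $\pi_i^*\varphi$ is a well-defined formal prime divisor on $Y_i$, and $\varphi \in \SS$ iff $\pi_i^*\varphi \in \pi_i^*\SS$ iff $\pi_i^*\varphi$ is a \emph{realized} divisor whose center lies in $\supp_{Y_i}(\pi_i^*\SS)$; moreover $\cent_X(\varphi) \in \supp_X(\SS)$ precisely in that case, by condition (2). Now $\psi = \pi_2^{-1}\pi_1$ induces $\psi^*$ on formal prime divisors (compose the pullback/extension bijections of Corollary~\ref{cor.DivPullback}), and $\psi^*(\pi_1^*\varphi) = \pi_2^*\varphi$. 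So $\psi$ matches up realized-with-center-in-support divisors on the two sides and matches up everything else with the isomorphism over $U$; in particular $\psi$ does not contract any curve and does not blow up any point — if it contracted a curve $C \subset Y_1$ to a point of $Y_2$, the generic point of $C$ would give a realized formal prime divisor on $Y_1$ whose pushforward to $X$ has center in $\supp_X(\SS)$ (it is not in the iso locus), yet its image on $Y_2$ would not be realized, contradicting that $\pi_2^*$ of the corresponding $\varphi$ is realized. By symmetry $\psi^{-1}$ likewise contracts nothing. A birational map between regular surfaces that contracts no curve in either direction is an isomorphism (this is where I invoke the standard surface fact), completing the proof.

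The main obstacle I anticipate is the bookkeeping in the third paragraph: cleanly identifying "indeterminacy point of $\psi$ on $Y_1$" with "a curve contracted by $\psi^{-1}$ on $Y_2$" and then realizing that curve as a formal prime divisor $\varphi$ on $X$ that must simultaneously be in $\SS$ (because $\pi_2^*\varphi$ is realized with center in the support) and not be in $\SS$ (because $\pi_1^*\varphi$ is not realized, its center on $Y_1$ being a point). Getting the quantifiers and the symmetric roles of $\pi_1,\pi_2$ exactly right — and making sure the surface-specific input (elimination of indeterminacy, no curve contracted $\Rightarrow$ isomorphism) is the only non-formal ingredient — is the delicate part; everything else is a formal consequence of Corollary~\ref{cor.DivPullback} and the three conditions of Definition~\ref{dfn.FormalDesc}.
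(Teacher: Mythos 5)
Your proof is correct, but it takes a genuinely different route from the paper's. The paper first factors both $\pi_1$ and $\pi_2$ through a \emph{minimal} desingularization $X^{(0)} \to X$ (invoking Lipman), and then matches the two towers $Y_i \to X^{(0)}$ step by step by an explicit induction: at each stage the permissible centers for the next point blow-up are exactly $\supp(\SS^{(j)})$, the blow-up kills exactly one non-realized formal prime divisor, and after $\#\SS^{(0)}$ steps both towers have become isomorphisms. Your approach skips the minimal model entirely and instead argues directly on the induced birational transformation $\psi = \pi_2^{-1}\pi_1: Y_1 \ratto Y_2$: $\psi$ is an isomorphism over $U = X \setminus \supp_X(\SS)$; any curve contracted by $\psi$ must lie in $\supp_{Y_1}(\pi_1^*\SS)$, which by condition~(1) is precisely the union of the curves $\overline{\cent(\pi_1^*\varphi)}$ for $\varphi \in \SS$, so it is the closure of the center of some $\pi_1^*\varphi$ with $\varphi \in \SS$; but then $\pi_2^*\varphi$ would have a closed point as its center, contradicting condition~(1) on the $Y_2$ side. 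Symmetry and the standard fact that a birational map of regular proper surfaces over a base contracting no curve in either direction is an isomorphism finish the argument. Both proofs ultimately rest on the same surface factorization theory (Castelnuovo / Hartshorne~V.5), but yours avoids the minimal resolution and is arguably more transparent about where the three conditions of Definition~\ref{dfn.FormalDesc} enter, while the paper's has the advantage of being effective (it constructs the isomorphism step by step). One small inaccuracy worth noting: your parenthetical biconditional ``$\cent_X(\varphi) \in \supp_X(\SS)$ precisely in that case'' is not an equivalence — a formal prime divisor can have center in $\supp_X(\SS)$ (say at a closed point on a component) without belonging to $\SS$. This clause is not load-bearing, though; what actually carries the argument is the observation that $\supp_{Y_1}(\pi_1^*\SS)$ is a \emph{finite union of irreducible curves, each the closure of the center of an element of $\pi_1^*\SS$}, so an irreducible curve contained in it \emph{must} coincide with one of those closures — that is the precise reason the contracted curve gives rise to an element $\varphi \in \SS$.
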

\begin{proof}
Let $b^{(0)}: X^{(0)} \to X$ be a minimal desingularization (see \cite[Cor.\ 27.3]{JL:1969}) of $X$ and let $\SS^{(0)}$ be obtained from $(b^{(0)})^*\SS$ by subtracting all realized formal prime divisors. Then $\pi_1$ and $\pi_2$ factor through $X^{(0)}$ yielding a commuting diagram\\
\centerline{\xymatrix{%
Y_1 \ar[dr]_{\pi_1^{(0)}} \ar@{-->}[rr]^{\left(\pi_2^{(0)}\right)^{-1}  \pi_1^{(0)}} & & Y_2 \ar[dl]^{\pi_2^{(0)}} \\
& X^{(0)} &
}}
and $\pi_1^{(0)}$ and $\pi_2^{(0)}$ are both described by $\SS^{(0)}$.

Moreover these maps are proper, birational morphisms between regular surfaces, so they factor into a finite sequence of point blow ups (see \cite[Cor.\ V.5.4]{RH:1977} which holds also for the case of a non-closed ground field). The set of possible centers for the first blow up is exactly $\supp_{X^{(0)}}(\SS^{(0)})$. Choose a center, compute the blow up $b^{(1)}: X^{(1)} \to X^{(0)}$ and set $\SS^{(1)}$ to be the set $(b^{(1)})^*\SS^{(0)}$ excluding the unique formal prime divisor that is turned realized and centered along the exceptional divisor. Again $\pi_1^{(0)}$ and $\pi_2^{(0)}$ factor through morphisms $\pi_1^{(1)}: Y_1 \to X^{(1)}$ and $\pi_2^{(1)}: Y_2 \to X^{(1)}$ described by $\SS^{(1)}$. Going on like this and setting $l := \#\SS^{(0)}$, we find that $\pi_1^{(l)}$ and $\pi_2^{(l)}$ are described by $\SS^{(l)} = \emptyset$ and hence are isomorphisms by Definition~\ref{dfn.FormalDesc}.
\end{proof}

It is not so clear whether a similar statement holds in higher dimensions when minimal resolutions are not available.

\begin{rem}[Formal Desingularization of Reduced Schemes]\label{rem.DesingReduced}
The above definitions can and will be used in a more general setting. Namely, if $X = \bigcup_i X_i$ is the decomposition of a \emph{reduced, equidimensional} (no longer integral) scheme into irreducible closed subschemes then any morphism $\spec \F\ll t\rr \to X$ is actually a morphism to one of the $X_i$. We call it a formal prime divisor if the corresponding $\spec \F\ll t\rr \to X_i$ is a formal prime divisor. We call it again realized iff it corresponds to the completion at the germ of a regular codimension $1$ point in $X$ (not in $X_i$!). Lemma~\ref{lem.DivPullback} and Corollary~\ref{cor.DivPullback} remain valid in this setting (where a birational morphism between two reduced schemes is a morphism that induces birational morphisms on all irreducible components). The definitions of center, support and formal desingularization carry over straight forward.
\end{rem}


\section{The Method of Jung Revisited}\label{sec.Jung}

In this section we describe the desingularization of surfaces after Jung.

\subsection{Theory of the Method}

First we will view a projective surface as a certain covering of a smooth surface. Then we modify the covering such that after passing to the integral closure the remaining singularities are very simple and can be resolved by point blow ups.

\subsubsection{Projective Surfaces as Ramified Coverings of the Plane}\label{sec.RamCover}

Consider a projective hypersurface $X \subseteq \P_{\E}^3$. We want to view $X$ in a slightly different way. Let $p_0 \in \P_{\E}^3 \setminus X$ be a closed \emph{rational} point ({\it i.e.}, its residue field is isomorphic to $\E$) and $\pi: W \to V$ the linear projection from $p_0$ where $W :=  \P_{\E}^3 \setminus p_0$ and $V := \P_{\E}^2$. This projection defines a line bundle. Its restriction $\pi\mid_X: X \to V$ is a Noether normalization of $X$, {\it i.e.}, a \emph{finite morphism} onto the projective plane. We subsume the governing properties in the following definition:

\begin{dfn}[Ramified Coverings]
Let $\pi: W \to V$ be a \emph{line bundle} s.t.\ $V$ is a \emph{regular}, \emph{integral} surface over $\E$. Further let $X \subset W$ be a \emph{reduced hypersurface} s.t.\ $\pi\vert_X: X \to V$ is \emph{finite}. The tuple $(\pi, X)$ is then called a \emph{ramified covering}.
\end{dfn}

That is our notion of ramified covering comprises that we are dealing with surfaces and that the covering surface is embedded in a line bundle over the base. Now we want to define the ramification locus of such a covering, {\it i.e.}, the locus where the covering $\pi\vert_X$ is ``not locally trivial'' (more precisely, not {\'e}tale). Since $\pi$ is a line bundle and $\pi\vert_X$ is finite, we can find a covering $\{U_i\}_i$ of $V$ by affine open subsets s.t.\ $\OO_W(\pi^{-1}(U_i)) \cong \OO_V(U_i)[z]$ is a polynomial ring in one variable and $X$ is given by a monic, squarefree polynomial $f_i \in \OO_V(U_i)[z]$.

\begin{dfn}[Discriminant Curve]
Let $(\pi: W \to V, X)$ be a ramified covering. Let $\{U_i\}_i$ be a covering of $V$ as above and $f_i \in \OO_V(U_i)[z]$ a set of polynomials defining $X$. The ideals $\langle \disc_z(f_i) \rangle \subseteq \OO_V(U_i)$ define an invertible sheaf of ideals and the corresponding subscheme is called the \emph{discriminant curve} $D_{\pi\vert_X} \subseteq V$.
\end{dfn}

The discriminant curve is actually a concept independent of the concrete covering and embedding of $X$ into $W$. It depends only on $\pi\vert_X$. Further the covering is locally trivial except over $D_{\pi\vert_X}$.

\begin{figure}
\centerline{\hfill
\fbox{\includegraphics[width=5cm]{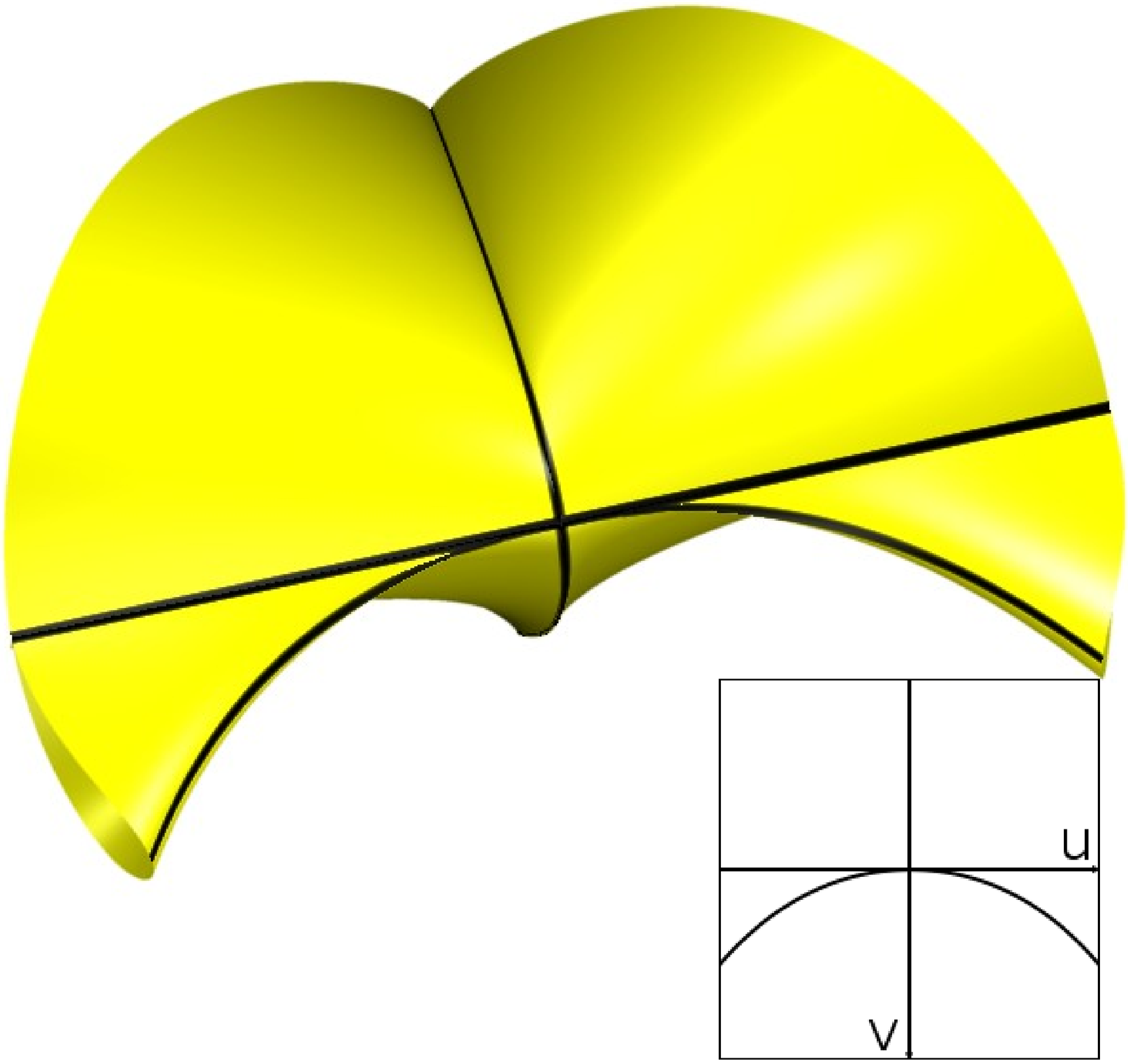}}\hfill
\fbox{\includegraphics[width=5cm]{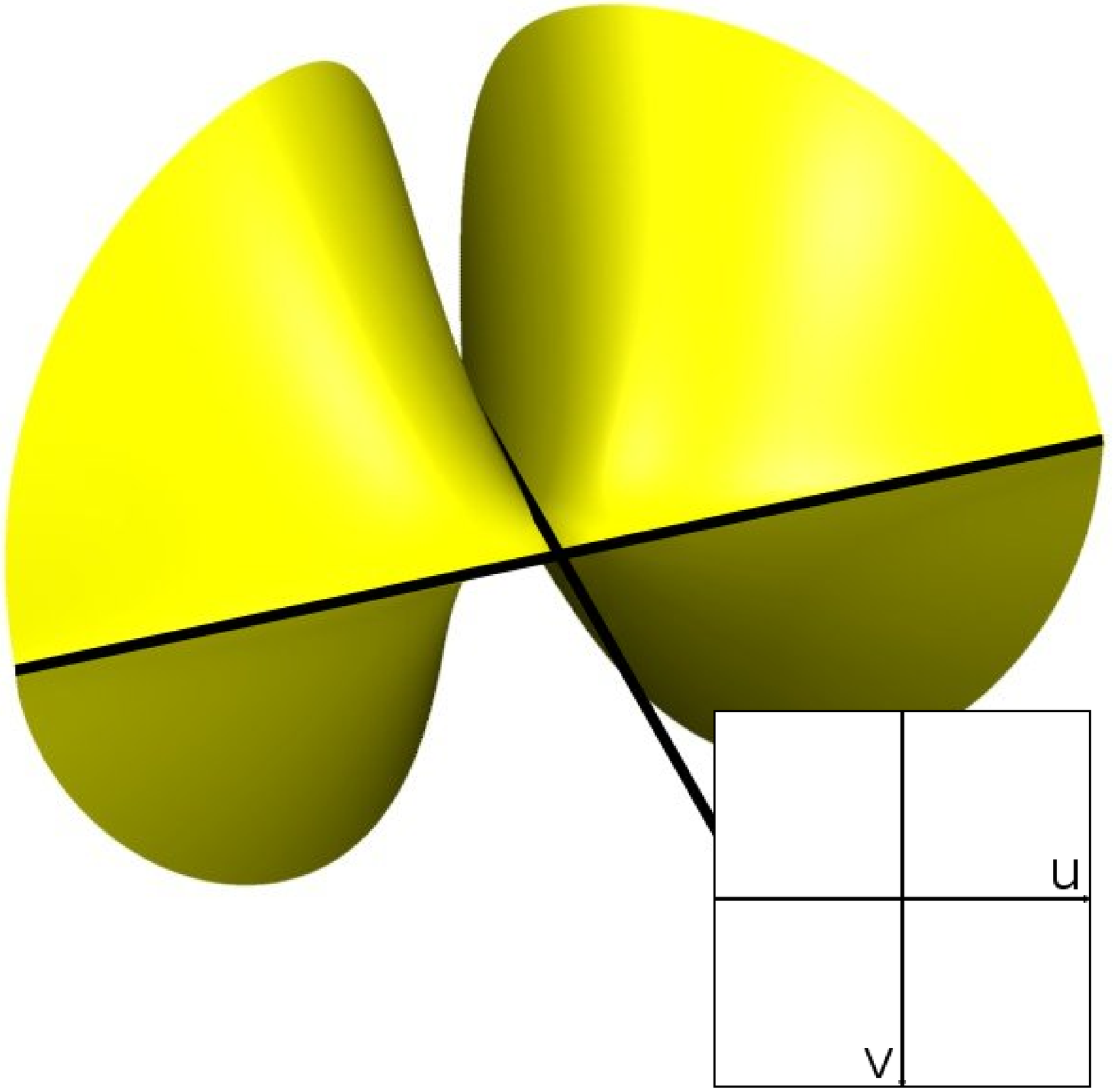}}\hfill
}
\caption{Embedded Desingularization of the Discriminant}\label{fig.EmbDiscDesing}\mbox{}\\[-1ex]
\parbox{\textwidth}{\tiny
The left side shows the local picture of the example surface $X$ together with its discriminant curve. On the right we depict a chart $X_0$ obtained from the embedded desingularization of the discriminant curve.
}
\end{figure}

\begin{exrun}{run.FormalDesing}
Consider the surface $X \subset \P_{\Q}^3$ given by $F = 0$ with $F := x_0^6 + 3x_0^4x_2x_3 + x_0^3x_1^2x_2 + 3x_0^2x_2^2x_3^2 + x_2^3x_3^3 \in \Q[x_0,x_1,x_2,x_3]$. Since $F$ is monic in $x_0$ the surface $X$ doesn't contain the point $p := (1:0:0:0)$. The line bundle defined by the projection of $X \setminus \{p\}$ to the plane $x_0 = 0$ is given by dehomogenizing with respect to $x_1$, $x_2$ and $x_3$.

In the last chart, the defining equation has the form $f := w^6 + 3vw^4 + u^2vw^3 + 3v^2w^2 + v^3 \in \Q[u,v][w]$ where we have mapped $x_0 \mapsto w$, $x_1 \mapsto u$, $x_2 \mapsto v$ and $x_3 \mapsto 1$. We have the local discriminant $\disc_w(f) = 729 u^8v^{12}(u^4 - 64v)$. Figure~\ref{fig.EmbDiscDesing} left displays the surface.
\end{exrun}

\subsubsection{Embedded Desingularization of the Discriminant Curve}\label{sec.EmbDesing}

We would like to give a more detailed study of the covering, but for general ramified coverings $(\pi, X)$ this is hard. The complexity of $X$ resp.\ of the covering map is partially reflected in the discriminant curve $D_{\pi\vert_X}$. Recall that a closed point $p$ is called \emph{normal crossing} for the embedded curve $D_{\pi\vert_X} \subseteq V$ if the curve is locally defined by $d_x^{e_x} d_y^{e_y} u \in \OO_{V,p}$ where $u$ is a unit, $\{d_x,d_y\}$ is a local system of parameters and $e_x,e_y \ge 0$. The whole curve is called normal crossing if it is normal crossing at every closed point.

\begin{dfn}[Nicely Ramified Coverings]
Let $(\pi, X)$ be a ramified covering. If $D_{\pi\vert_X}$ is normal crossing then we call $(\pi, X)$ \emph{nicely ramified}.
\end{dfn}

We can always modify a ramified covering to become nicely ramified. If we speak of \emph{normal crossing singularities} of the discriminant curve of a nicely ramified covering, we mean the closed points where two components intersect.

\begin{lem}[Simplification of Coverings]
Let $(\pi: W \to V, X)$ be a ramified covering. There is a \emph{proper}, \emph{birational} morphism $\rho: V' \to V$ s.t.\ the ramified covering $(\pi': W' \to V', X')$ is \emph{nicely ramified} (where $W' := W \times_V V'$ and $X' := X \times_W W'$). Further $\rho$ is given by a finite succession of blow ups in closed points.
\end{lem}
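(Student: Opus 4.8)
The plan is to reduce the statement to the classical fact that a plane curve over a field of characteristic zero admits an embedded resolution of singularities by a finite sequence of point blow-ups, and to check that this resolution makes the pulled-back covering nicely ramified. First I would observe that the condition we must achieve — that $D_{\pi'\vert_{X'}}$ be normal crossing — depends only on the discriminant curve as an embedded curve in the regular surface, since by the remark following the definition of the discriminant curve the formation of $D_{\pi\vert_X}$ is compatible with the base change $W' = W \times_V V'$, $X' = X \times_W W'$: locally the defining polynomials $f_i$ pull back to polynomials over $\OO_{V'}(U_i')$ and $\disc_z$ commutes with the ring homomorphism $\OO_V(U_i) \to \OO_{V'}(U_i')$. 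Hence $D_{\pi'\vert_{X'}} = \rho^{-1}(D_{\pi\vert_X})$ as subschemes (or at least $\rho^*$ of the invertible ideal sheaf cutting out $D_{\pi\vert_X}$), and it suffices to find a proper birational $\rho\colon V' \to V$, a composition of point blow-ups, such that the total transform of $D_{\pi\vert_X}$ is a normal crossing divisor on $V'$.

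Next I would invoke embedded resolution of curves on a smooth surface: since $V$ is a regular integral surface over a characteristic-zero field and $D_{\pi\vert_X} \subset V$ is an effective Cartier divisor (a curve), there is a finite sequence of blow-ups in closed points $V' = V_m \to V_{m-1} \to \cdots \to V_0 = V$ such that the total transform of $D_{\pi\vert_X}$ in $V'$ has simple normal crossings. This is standard (see, e.g., the surface case of embedded resolution; blowing up a non-normal-crossing point strictly drops a suitable invariant, so the process terminates); one has to be slightly careful that the statement in the references is usually over an algebraically closed field, but the blow-up centers are closed points, and normal crossing at a closed point $p$ is tested on $\OO_{V,p}$ via the existence of a regular system of parameters $\{d_x, d_y\}$ with $D$ locally $d_x^{e_x} d_y^{e_y} u$; this is stable under the faithfully flat base change to $\overline{\E}$, so the characteristic-zero embedded resolution over $\overline{\E}$, whose centers are automatically Galois-stable because the curve and the total transforms are defined over $\E$, descends to a sequence of point blow-ups over $\E$. (Alternatively one quotes the version of resolution for excellent two-dimensional schemes, which applies directly.)

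Finally I would assemble the pieces: take $\rho\colon V' \to V$ to be this composition of point blow-ups; it is proper and birational by construction. Set $W' := W \times_V V'$, which is again a line bundle over the regular integral surface $V'$, and $X' := X \times_W W'$, which is a reduced hypersurface in $W'$ with $\pi'\vert_{X'}\colon X' \to V'$ finite (finiteness is preserved by base change). So $(\pi', X')$ is a ramified covering, its discriminant curve is the total transform $\rho^{-1}(D_{\pi\vert_X})$, which is normal crossing by the choice of $\rho$, and therefore $(\pi', X')$ is nicely ramified.

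The main obstacle I expect is not the existence of embedded resolution of a plane curve, which is classical, but the two bookkeeping points around it: (a) verifying carefully that the discriminant curve behaves well under the base change — i.e. that $D_{\pi'\vert_{X'}}$ is exactly the pullback of the invertible ideal sheaf of $D_{\pi\vert_X}$, including the subtlety that after blowing up, $X'$ could fail to be reduced or the local polynomials could fail to stay squarefree over the generic point (they do not, since the generic fibre is unchanged by a birational base change), and (b) the descent of the resolution from $\overline{\E}$ to the non-closed field $\E$, which works precisely because all centers are closed points and are canonically determined (hence Galois-invariant), so their Galois orbits are $\E$-rational closed subschemes that can be blown up over $\E$.
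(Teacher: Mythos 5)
Your proposal follows essentially the same route as the paper's proof: reduce to the classical embedded resolution of plane curves by point blow-ups, then check that forming the discriminant curve commutes with the induced base change $W' = W \times_V V'$, $X' = X \times_W W'$. The paper states the first step by citing embedded desingularization of curves and explicitly ``leaves to the reader'' the compatibility of the discriminant with base extension; you fill in that verification as well as the descent from $\overline{\E}$ to $\E$ (which the cited reference already handles for excellent two-dimensional schemes), so your write-up is the same argument with the details spelled out.
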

\begin{proof}
The theorem on embedded desingularization of curves (see \cite[Thm.\ 1.47]{JK:2007}) shows the existence of the morphism $\rho$; Indeed it says that after a finite number of blow ups in the singular points of the reduced curve the pullback of $D_{\pi\vert_X}$ is normal crossing. For showing the lemma it remains to prove that constructing the discriminant curve commutes with base extension. This is left to the reader.
\end{proof}

\begin{excont}{run.FormalDesing}
The curve defined by $\disc_w(f)$ has a complicated singularity at the origin which needs to be resolved. One of the chart maps is given by $u \mapsto uv$, $v \mapsto u^2v^3$ and transforms $\disc_w(f)$ to $729 u^{34}v^{47}(u^2v - 64)$ which describes a curve with a normal crossing intersection at the origin.

The embedded desingularization of the discriminant curve can be applied to the surface $X$ (by mapping $w \mapsto w$) to obtain a surface with chart $X_0$. We obtain the new local equation $f_0 = w^6 + 3u^2v^3w^4 + u^4v^5w^3 + 3u^4v^6w^2 + u^6v^9$ with $\disc_w(f_0) = 729 u^{34}v^{47}(u^2v - 64)$, see Figure~\ref{fig.EmbDiscDesing} right.
\end{excont}

\subsubsection{Desingularization of Toroidal Surface Singularities}\label{sec.ToroidalSing}

The structure of nicely ramified coverings depends crucially on the celebrated Theorem of Jung-Abhyankar (see Theorem~\ref{thm.JungAbhyank}). A polynomial fulfilling the conditions of the theorem is called \emph{quasi-ordinary}. In its original form the theorem is not precise enough for our purposes, for example, because the statement doesn't involve the coefficient fields of the power series solutions. In the sequel $\E\ll\xu^{\Gamma}\rr$ will denote the ring of power series with coefficients in a field $\E$, variables $x_1,\dots,x_n$ and exponents in $\Gamma \cap \R_{\ge 0}^n$ (the non-negative orthant of a full rational lattice). We need the concept of rational parametrizations (for a refined version see Definition~\ref{dfn.RatParFine}):

\begin{dfn}[Parametrizations]\label{dfn.RatParCoarse}
Let $f \in \E\ll x_1,\dots,x_n\rr[z]$ be quasi-ordinary. {\it I.e.}, $f$ is monic, squarefree and s.t.\ $\disc_z(f) = x_1^{e_1} \cdots x_n^{e_n} u(\xu)$ where $u(0,\dots,0) \neq 0$.

We call $(\sigma, \alpha)$ with $\sigma \in \aut(\E'\ll x_1,\dots,x_n\rr \mid \E')$ and $\alpha \in \E'\ll\xu^{\Gamma}\rr$ a \emph{parametrization} of $f$ if $\E \subseteq \E'$, $\Z^n \subseteq \Gamma$ and $\sigma\lift{z}(f)(\alpha)=0$.
Let $g \vert f$ be an irreducible factor s.t.\ $\sigma\lift{z}(g)(\alpha)=0$. We call $(\sigma, \alpha)$ \emph{rational} if the induced homomorphism $\ic(\E\ll x_1,\dots,x_n\rr[z]/\langle g \rangle) \to \E'\ll\xu^{\Gamma}\rr$ which maps $z \mapsto \alpha$ and $\gamma \mapsto \sigma(\gamma)$ for $\gamma \in \E\ll x_1,\dots,x_n\rr$ is an isomorphism.
A set of rational parametrizations for $f$ is called \emph{complete} if it is in bijective correspondence with the irreducible factors of $f$.
\end{dfn}

Here we used $\sigma\lift{z}$ for the lifting of the automorphism to the polynomial ring by coefficient-wise application. We will show later, that we can actually compute such rational parametrizations.

\begin{thm}[Existence of Rational Parametrizations]\label{thm.ExistRatPar}
Let $f \in \E\ll x_1,\dots,x_n\rr [z]$ be a \emph{quasi-ordinary} polynomial. Then a complete set of rational parametrizations of $f$ exists and can be computed. Moreover, if actually $\disc_z(f) = x_1^{e_1} \cdots x_m^{e_m} u(\xu)$ where $m \le n$ and $u(0,\dots,0) \neq 0$ then the exponent lattices of the power series rings will be of the form $\Gamma \times \Z^{n-m}$ for some $m$-dimensional rational lattice $\Gamma$.
\end{thm}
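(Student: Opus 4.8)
The plan is to prove the existence part by induction on the number $m$ of variables that actually appear in the discriminant, reducing to the well-understood univariate Puiseux situation, and to handle the rationality (field-theoretic) refinement by carefully tracking the residue field extensions along the way. The key classical input is the Theorem of Jung--Abhyankar (Theorem~\ref{thm.JungAbhyank}): after a monomial substitution $x_i \mapsto x_i^{d}$ for a suitable common $d$, each irreducible factor of $f$ splits, so there is an automorphism $\sigma$ of a power series ring over an extension field $\E'$ and a fractional power series $\alpha$ with $\sigma\lift{z}(f)(\alpha) = 0$. So the first step is to invoke this theorem to produce \emph{some} (not yet rational, not yet field-minimal) parametrization for each branch; the statement about the shape $\Gamma \times \Z^{n-m}$ of the exponent lattice will follow because the variables $x_{m+1},\dots,x_n$ do not divide the discriminant and hence appear with integer exponents only in all the branches — the quasi-ordinary structure is ``trivial'' in those directions.

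Next I would address rationality, which is the heart of the theorem. The naive Jung--Abhyankar construction works over the algebraic closure $\overline{\E}$ (or at least over a large enough finite extension and with $d$-th roots of unity adjoined), so the resulting parametrizations are typically defined over a field that is too big and, moreover, the Galois action permutes them. The idea, following Duval's argument in the curve case, is to let $\gal(\overline{\E}/\E)$ (together with the action permuting the choices of roots) act on the set of parametrizations coming from a single irreducible factor $g \mid f$, and to show that the orbits are exactly in bijection with the $\overline{\E}$-irreducible factors of $g$, so that summing/descending over each orbit produces a single parametrization defined over the correct residue field $\E' = $ the field of definition of that branch, realizing the isomorphism $\ic(\E\ll\underline{x}\rr[z]/\langle g\rangle) \xrightarrow{\sim} \E'\ll\underline{x}^\Gamma\rr$. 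Concretely: the integral closure on the left is a normal local domain whose fraction field is a finite extension of $\E(\!(\underline{x})\!)$; a parametrization $z \mapsto \alpha$, $\gamma \mapsto \sigma(\gamma)$ gives a homomorphism into a normal ring, which is automatically injective (both sides are domains and it is nonzero on the fraction-field level), and one checks surjectivity by a degree/ramification count — the index $[\Gamma : \Z^n]$ (or rather its $m$-dimensional part) together with $[\E':\E]$ must match the degree of $g$ over $\E(\!(\underline{x})\!)$, and Jung--Abhyankar guarantees this when one picks $d$ minimally.

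For the computability assertion I would point out that every step above is effective: computing the discriminant and its monomialization is effective; the Jung--Abhyankar exponent $d$ can be bounded (e.g.\ via the degree of $f$ and the structure of the discriminant) and the required splitting is a finite field-extension computation plus a Hensel/Newton-polygon lifting, which is exactly what the multivariate Newton--Puiseux procedure of Section~\ref{sec.Series} will carry out; and the Galois descent to the minimal field $\E'$ is a standard effective-linear-algebra task (computing the fixed field of a finite group acting on a finite-dimensional space). I would defer the detailed algorithm and its correctness to the later sections, where the power-series representation is set up, and here only record that no step requires anything beyond factorization over $\E$ and linear algebra over finite extensions of $\E$.

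The main obstacle I anticipate is the rationality/descent step: one must show not merely that \emph{a} parametrization exists over \emph{some} extension, but that the natural map out of the integral closure is an \emph{isomorphism} onto a power series ring over the \emph{right} field with the \emph{right} exponent lattice. This forces one to identify precisely which extension field $\E'$ and which lattice $\Gamma$ occur — equivalently, to understand the integral closure $\ic(\E\ll\underline{x}\rr[z]/\langle g\rangle)$ well enough to see it is again a (fractionary) power series ring (its regularity, and the fact that it is a monomial extension in the quasi-ordinary setting). Making this clean and coordinate-free — rather than buried in explicit root computations — is where the ``new and more elementary proof'' the introduction promises will have to do its work; I would expect to isolate a lemma saying that the integral closure of a quasi-ordinary local ring is itself of the form $\E'\ll\underline{x}^\Gamma\rr$, and then the theorem becomes a matter of exhibiting the tautological parametrization and descending it along the Galois action.
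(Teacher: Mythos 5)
Your proposal is a genuinely different route from the paper, and it has a gap at the rationality step that I want to flag. The paper does \emph{not} prove existence of rational parametrizations by first producing parametrizations over $\overline{\E}$ and then performing Galois descent. Instead, the proof of Theorem~\ref{thm.ExistRatPar} simply points to the constructive machinery of Section~\ref{sec.Series}: the recursive multivariate Newton--Puiseux algorithm \texttt{ParamRec} (Algorithm~\ref{alg.ParamRec}), together with Lemma~\ref{lem.BoundExts} (an upper bound $\sum_i [\E_i:\E_0]\,\#(\Gamma_i/\Gamma_0) \le \deg_z f$ on the extensions produced), Lemma~\ref{lem.Completeness} (every fractionary root of $f$ is associated to one of the computed parametrizations via an extension $\tau$ of $\sigma^{-1}$ to the larger lattice), and Corollary~\ref{cor.RatPar} (the degree count forces equality, hence rationality and bijectivity with the irreducible factors). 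The lattice shape $\Gamma\times\Z^{n-m}$ then follows by coupling Lemma~\ref{lem.Completeness} with Theorem~\ref{thm.JungAbhyank}, which already says the roots live in $\overline{\E}\ll x_1^{1/d},\dots,x_m^{1/d},x_{m+1},\dots,x_n\rr$.

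The gap in your plan is that Duval's trick, which you invoke, is not a Galois-descent argument in the sense of passing to $\overline{\E}$, forming orbits, and then taking a fixed field or ``summing over an orbit.'' A single irreducible factor $g$ of $f$ need not have \emph{any} root $\alpha$ with coefficients in the minimal residue field $\E'$: what exists is a root of a \emph{twisted} polynomial $\sigma\lift{z}(g)$, where $\sigma$ is a diagonal scaling automorphism $\xu^{\mathfrak{m}}\mapsto \sigma(\mathfrak{m})\xu^{\mathfrak{m}}$ in $\hom(\Gamma_0,\E'^*)$. The whole point of Duval's method (and its multivariate generalization here, via the B\'ezout-coefficient choice $ub+\sum v_ic_i=1$ at each Newton-polygon step) is to construct $\sigma$ and $\alpha$ \emph{simultaneously and incrementally}, so that at every stage the root of the current edge equation can be taken in a field of minimal degree, with $\sigma$ absorbing the discrepancy. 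Your sketch never explains how the automorphism $\sigma$ arises or why a suitable one exists, nor how ``descending along the Galois action'' would concretely produce a pair $(\sigma,\alpha)$ over $\E'$ rather than merely establishing an abstract isomorphism. The paper avoids all of this by working bottom-up and closing the argument with a degree count rather than a descent. (Your observation about the $\Z^{n-m}$ factor of the lattice is correct, and your inductive framing on $m$ is harmless but not what the paper does; the recursion is over Newton-polygon steps, not over the number of ramified variables.)
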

\begin{proof}
For the first statement see Algorithm~\ref{alg.Param} and Corollary~\ref{cor.RatPar} of Section~\ref{sec.AlgSeries}. In Lemma~\ref{lem.Completeness} the relation between the computed parametrizations and the fractionary power series roots of $f$ is explored. Together with Theorem~\ref{thm.JungAbhyank} this gives the statement about the exponent lattice.
\end{proof}

\begin{excont}{run.FormalDesing}
The transformed polynomial $f_0 \in \Q[u,v][w]$ is quasi-ordinary. In this case a complete set of rational parametrizations is given by only a single parametrization $(\sigma, \alpha)$ with $\sigma: \Q\ll u,v\rr \to \Q\ll u,v\rr: u \mapsto -8u, v \mapsto -v$ and \[\alpha := - 8u^{\tfrac{6}{6}}v^{\tfrac{9}{6}} + 8u^{\tfrac{8}{6}}v^{\tfrac{10}{6}} - 4u^{\tfrac{10}{6}}v^{\tfrac{11}{6}} + u^{\tfrac{14}{6}}v^{\tfrac{13}{6}} - \tfrac{1}{2}u^{\tfrac{18}{6}}v^{\tfrac{15}{6}} + \tfrac{5}{16}u^{\tfrac{22}{6}}v^{\tfrac{17}{6}} - \tfrac{7}{32}u^{\tfrac{26}{6}}v^{\tfrac{19}{6}} + \dots\]
We will see later that the simple form of $\sigma$ is no coincidence. We further have $\alpha \in \Q\ll (u,v)^{\Gamma} \rr$ with $\Gamma := \Z (0,\tfrac{1}{2}) + \Z (\tfrac{1}{3}, \tfrac{1}{6})$. This lattice is shown in Figure~\ref{fig.Lattices} below.
\end{excont}

Complete sets of rational parametrizations describe very explicitely the structure of the integral closure of $\E\ll x_1, \dots, x_n \rr[z]/\langle f \rangle$.

\begin{lem}[Decomposition Induced by Rational Parametrizations]\label{lem.RatParDecomp}
Let $f \in \E\ll x_1, \dots, x_n \rr[z]$ be a \emph{quasi-ordinary} polynomial and $\PP = \{(\sigma_i, \alpha_i)\}_{i}$ with $(\sigma_i, \alpha_i) \in \aut(\E_i\ll x_1, \dots, x_n \rr \mid \E_i) \times \E_i\ll\xu^{\Gamma_i}\rr$ a complete set of rational parametrizations of $f$. Then the homomorphisms \[\psi_i: \E\ll x_1, \dots, x_n \rr[z]/\langle f \rangle \to \E_i\ll\xu^{\Gamma_i}\rr: \gamma \mapsto \sigma_i(\gamma) \text{ for } \gamma \in \E\ll x_1, \dots, x_n \rr \text{ and } z \mapsto \alpha_i \] can be composed to a homomorphism \[\psi: \E\ll x_1, \dots, x_n \rr[z]/\langle f \rangle \to \prod_i \E_i\ll\xu^{\Gamma_i}\rr: a \mapsto (\dots,\psi_i(a),\dots)\] which lifts to an isomorphism $\psi: \intclos{\E\ll x_1, \dots, x_n \rr[z]/\langle f \rangle} \to \prod_i \E_i\ll\xu^{\Gamma_i}\rr$.
\end{lem}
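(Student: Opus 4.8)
The statement asserts that the product of the parametrization homomorphisms induces an isomorphism from the integral closure of $R := \E\ll\xu\rr[z]/\langle f\rangle$ onto $\prod_i \E_i\ll\xu^{\Gamma_i}\rr$. My plan is to reduce to the irreducible case and then analyze a single rational parametrization. Since $f$ is squarefree, $R$ decomposes as a product $\prod_i R_i$ with $R_i := \E\ll\xu\rr[z]/\langle g_i\rangle$ over the irreducible factors $g_i$ of $f$, and integral closure commutes with finite products, so $\intclos{R} = \prod_i \intclos{R_i}$. Matching up the bijection between parametrizations and irreducible factors from the definition of completeness, it suffices to show that for each $i$ the map $\psi_i$ lifts to an isomorphism $\intclos{R_i} \xrightarrow{\ \sim\ } \E_i\ll\xu^{\Gamma_i}\rr$. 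But that is almost exactly the \emph{definition} of a rational parametrization (Definition~\ref{dfn.RatParCoarse}): $(\sigma_i,\alpha_i)$ is rational precisely when the induced homomorphism $\intclos{R_i} \to \E_i\ll\xu^{\Gamma_i}\rr$, $z\mapsto\alpha_i$, $\gamma\mapsto\sigma_i(\gamma)$, is an isomorphism.

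\medskip

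So the real content to verify is that the set-map from the definition, which is defined on $\intclos{R_i}$, is compatible with the homomorphism $\psi_i$ defined on $R_i$ — i.e., that the square

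\centerline{\xymatrix{
R_i \ar[r]^-{\psi_i} \ar[d] & \E_i\ll\xu^{\Gamma_i}\rr \\
\intclos{R_i} \ar[ur] &
}}

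commutes, where the vertical arrow is the canonical inclusion. This is immediate: both $\psi_i$ and the map of Definition~\ref{dfn.RatParCoarse} send $z\mapsto\alpha_i$ and $\gamma\mapsto\sigma_i(\gamma)$ for $\gamma\in\E\ll\xu\rr$, and these generate $R_i$ as a ring, so the two agree on the image of $R_i$. Hence the extension to $\intclos{R_i}$ provided by the definition is a lift of $\psi_i$, and by hypothesis it is an isomorphism. Taking the product over $i$ gives that $\psi$ lifts to an isomorphism $\intclos{R}\to\prod_i\E_i\ll\xu^{\Gamma_i}\rr$, as claimed.

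\medskip

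A few small points need care. First, one must check the target rings $\E_i\ll\xu^{\Gamma_i}\rr$ are normal (integrally closed in their fraction fields), so that an isomorphism $\intclos{R_i}\to\E_i\ll\xu^{\Gamma_i}\rr$ is even possible and the lift is forced to land there; this follows since $\E_i\ll\xu^{\Gamma_i}\rr$ is, after the monomial change of variables trivializing the lattice $\Gamma_i$, isomorphic to an ordinary power series ring $\E_i\ll y_1,\dots,y_n\rr$, which is regular hence normal. Second, one should note that $\psi_i$ is well-defined, i.e.\ that $\psi_i(f)=0$: this is exactly the relation $\sigma_i\lift{z}(g_i)(\alpha_i)=0$ together with $g_i\mid f$ (and in fact $\psi_i$ kills all factors $g_j$, $j\neq i$ — but that is not needed, only that it kills $g_i$, which is the factor matched to $(\sigma_i,\alpha_i)$). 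Third, that $R_i$ is a domain with fraction field a finite extension of $\E\ll\xu\rr[z]/\cdots$ of degree $\deg g_i$, so that ``integral closure'' is taken in the fraction field of $R_i$ and everything is finite over $\E\ll\xu\rr$; this is standard since $g_i$ is monic.

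\medskip

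I do not expect a genuine obstacle here: the lemma is essentially a bookkeeping statement unpacking Definition~\ref{dfn.RatParCoarse} and the definition of completeness, plus the fact that integral closure commutes with finite products and that the completed monomial rings are regular. The only mildly nontrivial ingredient is the normality of $\E_i\ll\xu^{\Gamma_i}\rr$, which is handled by the monomialization/Cohen-structure observation above; everything else is formal.
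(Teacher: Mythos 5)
Your overall reduction is sound, but the first step as stated has a genuine gap. You write that ``since $f$ is squarefree, $R := \E\ll\xu\rr[z]/\langle f\rangle$ decomposes as a product $\prod_i R_i$'' over the irreducible factors $g_i$. This is a Chinese Remainder Theorem argument, and it \emph{fails} here: over the local ring $\E\ll\xu\rr$ two distinct monic irreducible factors need not be comaximal. For a concrete quasi-ordinary example, take $f = (z - x_1)(z - x_1 - x_1 x_2)$, whose discriminant is $(x_1 x_2)^2$; then $\langle z - x_1,\ z - x_1 - x_1 x_2\rangle = \langle z - x_1,\ x_1 x_2\rangle$ is a proper ideal, so $R$ is a local-like ring with nontrivial intersection of minimal primes and does \emph{not} split as a product. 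Geometrically the branches of a quasi-ordinary germ typically all pass through the origin, so CRT is the wrong tool.

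The \emph{conclusion} you want, $\intclos{R} \cong \prod_i \intclos{R_i}$, is nevertheless correct, but it needs an argument at the level of the total quotient ring, not of $R$ itself. The paper does exactly this: it constructs the idempotents $e_1 := h_2/(h_1+h_2)$, $e_2 := h_1/(h_1+h_2)$ in $\totfrac(R)$ for a coprime splitting $f = h_1 h_2$ (here ``coprime'' means in $\totfrac(\E\ll\xu\rr)[z]$, which is a PID, so the denominator $h_1+h_2$ is a non-zero-divisor), checks $e_1 + e_2 = 1$, $e_1 e_2 = 0$, $e_i^2 = e_i$, and observes that being idempotent forces $e_i \in \intclos{R}$, whence the integral closure splits even though $R$ does not. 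Equivalently you could invoke the standard fact that for a reduced Noetherian ring the normalization decomposes as the product of the normalizations of the quotients by the minimal primes — but either way you must say \emph{something}; ``CRT because squarefree'' is not it.

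A smaller point: your justification for normality of $\E_i\ll\xu^{\Gamma_i}\rr$ (that a monomial change of variables turns it into $\E_i\ll y_1,\dots,y_n\rr$) is also not correct in general — the monoid $\Gamma_i \cap \R_{\ge 0}^n$ is generated by a lattice basis only when the associated toric variety is smooth, which is precisely \emph{not} the interesting case in this paper. Normality does hold ($\E_i\ll\xu^{\Gamma_i}\rr$ is the completion of a normal affine toric ring), but in fact you don't need to prove it at all: the definition of a rational parametrization already asserts that the induced map from $\intclos{R_i}$ to $\E_i\ll\xu^{\Gamma_i}\rr$ is an isomorphism, so normality of the target is automatic. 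Once you repair the product decomposition, the rest of your argument — matching parametrizations to irreducible factors via completeness, observing that the definitional isomorphism agrees with $\psi_i$ on generators — is correct and matches the paper's intent.
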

\begin{proof}[Sketch of Proof]
By the definition of rational parametrizations it is enough to show
\[\intclos{\E\ll x_1, \dots, x_n \rr[z]/\langle f \rangle} \cong \textstyle\prod_i \intclos{\E\ll x_1, \dots, x_n \rr[z]/\langle f_i \rangle}\] where the righthand side runs over all irreducible factors $f_i$ of $f$. By \cite[Exer.\ 2.26]{DE:1995} this can be shown using orthogonal idempotents. More precisely, if $f = h_1 h_2$ where $h_1$ and $h_2$ are factors without common divisor then set $e_1 := h_2/(h_1+h_2)$ and $e_2 := h_1/(h_1+h_2)$ to be elements in $\totfrac(\E\ll x_1, \dots, x_n \rr[z]/\langle f \rangle)$. (For this one checks that the denominators are no zero-divisors.) Then one easily computes $e_1 + e_2 = 1$, $e_1 e_2 = 0$ and $e_i = e_i(e_1 + e_2) = e_i^2 + e_1e_2 = e_i^2$. The idempotency relations imply in particular that $e_1, e_2 \in \intclos{\E\ll x_1, \dots, x_n \rr[z]/\langle f \rangle}$. So the integral closure splits: \[\intclos{\E\ll x_1, \dots, x_n \rr[z]/\langle f \rangle} \cong e_1 \intclos{\E\ll x_1, \dots, x_n \rr[z]/\langle f \rangle} \times e_2 \intclos{\E\ll x_1, \dots, x_n \rr[z]/\langle f \rangle}\] Finally one shows $e_i \intclos{\E\ll x_1, \dots, x_n \rr[z]/\langle f \rangle} \cong \intclos{\E\ll x_1, \dots, x_n \rr[z]/\langle h_i \rangle}$.
\end{proof}

\begin{cor}[Singularities of Ramified Coverings]\label{cor.ToroidalSing}
Let $(\pi, X)$ be a ramified covering, $\nu: \widetilde{X} \to X$ the \emph{normalization morphism}. Then the (isolated) singular points of $\widetilde{X}$ lie over the singularities of the (reduced) discriminant curve. Moreover, over normal crossing singularities the singularities of $\widetilde{X}$ are \emph{toroidal}.
\end{cor}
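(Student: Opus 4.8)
The plan is to localize and then formally complete at a closed point $p\in V$, turning the question into the study of the integral closure described by Lemma~\ref{lem.RatParDecomp}. Fix a monic squarefree $f\in\OO_{V,p}[z]$ defining $X$ over a neighbourhood of $p$ and set $d:=\disc_z(f)$, a nonzero element. Since $\widehat{\OO_{V,p}}$ is a complete regular local ring of dimension $2$ containing a field, Cohen's structure theorem (Theorem~\ref{thm.Cohen}) gives $\widehat{\OO_{V,p}}\cong\kappa(p)\ll x_1,x_2\rr$ with $\kappa(p)$ the residue field at $p$, and $f$ stays monic and squarefree there because the injection $\OO_{V,p}\into\widehat{\OO_{V,p}}$ sends $d$ to a nonzero element. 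As the ring $\OO_{V,p}[z]/\langle f\rangle$ is essentially of finite type over $\E$, hence excellent, normalization commutes with $\mm_{V,p}$-adic completion; therefore $\intclos{\kappa(p)\ll x_1,x_2\rr[z]/\langle f\rangle}$ is canonically the product $\prod_q\widehat{\OO_{\widetilde X,q}}$ over the finitely many points $q$ of $\widetilde X$ above $p$, each factor a local ring. It thus suffices to identify these factors via Theorem~\ref{thm.ExistRatPar} and Lemma~\ref{lem.RatParDecomp}, choosing the Cohen isomorphism so that $x_1,x_2$ map to a convenient regular system of parameters at $p$.

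For the first assertion I would show that $\widetilde X$ is regular over $V\setminus\mathrm{Sing}\big((D_{\pi\vert_X})_{\mathrm{red}}\big)$. If $p\notin D_{\pi\vert_X}$ then $d$ is a unit, so $\OO_{V,p}[z]/\langle f\rangle$ is {\'e}tale over the regular ring $\OO_{V,p}$, hence regular, and $\nu$ is an isomorphism near the fibre over $p$. If $p$ is a regular point of $(D_{\pi\vert_X})_{\mathrm{red}}$, choose $x_1$ cutting out the reduced discriminant at $p$; then $d=x_1^{e_1}u$ with $u(0,0)\neq 0$, so $f$ is quasi-ordinary with $m=1$ in the notation of Theorem~\ref{thm.ExistRatPar}, and that theorem together with Lemma~\ref{lem.RatParDecomp} gives $\intclos{\kappa(p)\ll x_1,x_2\rr[z]/\langle f\rangle}\cong\prod_i\E_i\ll(x_1,x_2)^{\Gamma_i\times\Z}\rr$ with each $\Gamma_i$ a one-dimensional lattice containing $\Z$, i.e.\ $\Gamma_i=\tfrac1{d_i}\Z$; hence each factor is the regular ring $\E_i\ll x_1^{1/d_i},x_2\rr$. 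So the singular locus of $\widetilde X$ lies over $\mathrm{Sing}\big((D_{\pi\vert_X})_{\mathrm{red}}\big)$, a finite set; since $\nu$ and $\pi\vert_X$ are finite, this singular locus is itself finite, i.e.\ consists of isolated points.

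For the toroidal statement I would take $p$ to be a normal crossing singularity and choose Cohen coordinates so that $(D_{\pi\vert_X})_{\mathrm{red}}$ is given by $x_1x_2$ near $p$; then $d=x_1^{e_1}x_2^{e_2}u$ with $e_1,e_2\ge 1$ and $u(0,0)\neq 0$, so $f$ is quasi-ordinary with $m=n=2$. Theorem~\ref{thm.ExistRatPar} and Lemma~\ref{lem.RatParDecomp} then give $\intclos{\kappa(p)\ll x_1,x_2\rr[z]/\langle f\rangle}\cong\prod_i\E_i\ll(x_1,x_2)^{\Gamma_i}\rr$ with each $\Gamma_i$ a two-dimensional rational lattice containing $\Z^2$, and by the reduction above these factors are precisely the $\widehat{\OO_{\widetilde X,q}}$ for $q$ above $p$. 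Finally I would note that $\E_i\ll(x_1,x_2)^{\Gamma_i}\rr$ is the $\mm$-adic completion of the semigroup algebra $\E_i[\Gamma_i\cap\R_{\ge0}^2]$, whose monoid is finitely generated (Gordan's lemma), saturated in $\Gamma_i$, and spans the strictly convex two-dimensional cone $\R_{\ge0}^2$ (it contains $\Z_{\ge0}^2$ since $\Z^2\subseteq\Gamma_i$); hence $\spec\E_i[\Gamma_i\cap\R_{\ge0}^2]$ is a normal affine toric surface with a single torus-fixed point, and its completion at that point is exactly the local model of a toroidal surface singularity.

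The step I expect to be the main obstacle is the reduction to the formal picture: justifying carefully that normalization commutes with $\mm_{V,p}$-adic completion (via excellence of $\OO_{V,p}[z]/\langle f\rangle$), that the resulting factors really are the completed local rings of $\widetilde X$ at the points over $p$, and that the Cohen isomorphism can be chosen compatibly with a regular system of parameters cutting out the reduced discriminant. Once this is in place, the corollary is essentially a read-off of Theorem~\ref{thm.ExistRatPar} and Lemma~\ref{lem.RatParDecomp}, the toroidality of the factors being the standard description of completed affine semigroup rings attached to strictly convex rational polyhedral cones.
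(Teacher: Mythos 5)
Your proof is correct and follows the paper's argument essentially verbatim: complete formally at $p$ via Cohen's structure theorem, use the fact that normalization commutes with completion to identify $\widehat{\OO_{\widetilde X,q}}$ with the factors in Lemma~\ref{lem.RatParDecomp}, and then read off regularity (when $m\le 1$) or toroidality (when $m=2$) from the exponent-lattice description of Theorem~\ref{thm.ExistRatPar}. The only cosmetic differences are that you justify the commutation of normalization with completion via excellence while the paper cites Lemma~\ref{lem.NormCompl} (Zariski's Main Theorem), and that you leave implicit the deduction that regularity of $\widehat{\OO_{\widetilde X,q}}$ forces regularity of $\OO_{\widetilde X,q}$ (the paper notes faithful flatness here).
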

\begin{proof}
Let $q \in \widetilde{X}$ be a closed point and $p := \pi(\nu(q))$. Assume that $p$ either does not lie at all on $D_{\pi\vert_X}$, or lies on a regular point of the reduced curve, or is a normal crossing singularity. In each of these cases $D_{\pi\vert_X}$ can locally be defined by $d_x^{e_x}d_y^{e_y} u \in \OO_{V,p}$ where $\{d_x,d_y\}$ is a local system of parameters and $e_x \ge 0$, $e_y \ge 0$. There is an isomorphism between the completion of $\OO_{V,p}$ and $\F\ll x,y\rr$ (where $\F$ is the residue field of $\OO_{V,p}$) which maps $d_x \mapsto x$ and $d_y \mapsto y$, see Theorem~\ref{thm.Cohen}. We can as well assume that $\widehat{\OO_{V,p}} = \F\ll x,y\rr$, $d_x = x$ and $d_y = y$.

The completion of the fiber of $\pi\vert_X$ can be defined by the vanishing of a polynomial $f \in \F\ll x,y\rr[z]$ which is quasi-ordinary. Building the integral closure commutes with completion, see Lemma~\ref{lem.NormCompl}. Then Lemma~\ref{lem.RatParDecomp} implies that the completion of $\OO_{\widetilde{X},q}$ is isomorphic to a power series ring with fractionary exponents. But those are the completions of the distinguished stalks of affine toric surfaces, hence, all such points $q$ can at most be toroidal.

If $p$ does not lie on $D_{\pi\vert_X}$ then $e_x = e_y = 0$. If it is a regular point of $D_{\pi\vert_X}$ then, say, $e_x > 0$ and $e_y = 0$. By Theorem~\ref{thm.ExistRatPar}, in both cases, the power series rings in the parametrizations have exponent lattices of the form $(\tfrac{1}{e} \Z) \times \Z$. These rings are regular and by faithful flatness $\OO_{\widetilde{X},q}$ must be regular itself.
\end{proof}

\begin{excont}{run.FormalDesing}
For our example this means that the integral closure of $\Q\ll u,v\rr[w]/\langle f_0 \rangle$ is isomorphic to $\Q\ll (u,v)^{\Gamma} \rr$. This fits well to the picture in Figure~\ref{fig.EmbDiscDesing} that suggests that the surface $X_0$, though singular, has only a single analytic branch at the origin.
\end{excont}

Modifying a ramified covering to become nicely ramified is constructive, since embedded desingularization of curves is. Passing to the normalization of a scheme of finite type over $\E$ is constructive, since computing the integral closure of finitely generated $\E$-algebras is (see, {\it e.g.}, \cite{TdJ:1998}). By the above corollary we are left with the task of desingularizing normal toroidal surface singularities.

These singularities have first been studied by Jung \cite{HWEJ:1908}. As noted in \cite[Lect.\ 2, \S 2]{VC_JG_UO:1984} a normal toric surface can be desingularized by a finite sequence of point blow ups. This property can be transferred to toroidal singularities by Lemma~\ref{lem.BlowCompl}. So we could obtain a desingularization by computing a sequence of point blow ups of the normalization $\widetilde{X}$ and, hence, Jung's method is already constructive.

However, computing normalizations is not cheap. Lemma~\ref{lem.RatParDecomp} shows that rational parametrizations anyway describe the integral closure. In the following section we will therefore follow a different approach, always with formal desingularizations in mind. We will also benefit in other places from the additional flexibility obtained by applying formal isomorphisms.

The final lemma of this section has its origins in Hirzebruch \cite{FH:1953} (see also \cite{HBL:1971}) who first gave explicit formulas for the desingularization of toric surface singularities using continued fractions. Afterwards the arrival of toric geometry ({\it cf.} \cite{WF:1993}) has introduced new terminology and means of description. Recall that the \emph{dual} $\Gamma^{\vee}$ of a full lattice $\Gamma \subset \Q^n$ is the set of all linear forms $\mfn \in \Q^n$ with $\mfn(\mfm) \in \Z$ for all $\mfm \in \Gamma$.

\begin{figure}
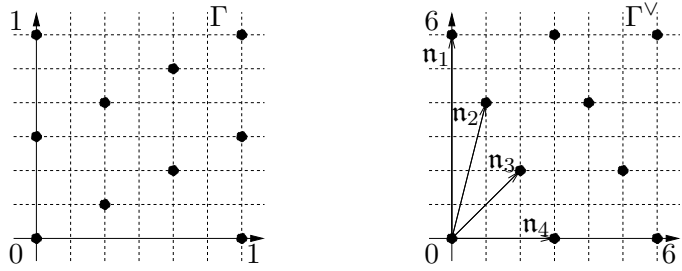

\centerline{\hfill
\input{lattice.030.tex}\hfill
\input{lattice-dual.030.tex}\hfill
}\caption{Lattice and Dual Lattice}\label{fig.Lattices}\mbox{}\\[-1ex]
\parbox{\textwidth}{\tiny
We show the non-negative quadrant of a lattice $\Gamma$ and its dual $\Gamma^\vee$ ({\it i.e.}, the set of all linear forms to $\Z$ identified via scalar multiplication of vectors). In the dual we have inscribed an minimal sequence of generators $\mfn_1,\dots,\mfn_4$. Indices are chosen s.t.\ subsequent pairs correspond to neighboring vectors.
}
\end{figure}

\begin{lem}[Formal Desingularization of Toric Surfaces]\label{lem.ToricSing}
Let $\spec \E[\xu^\Gamma]$ for some rational lattice $\Gamma \subset \Q^2$ be a toric surface and $\mfn_1,\dots,\mfn_l \in \Gamma^{\vee} \cap \R_{\ge 0}^2$ an \emph{ordered and minimal} sequence of monoid generators as in Figure~\ref{fig.Lattices}. Then the set of morphisms $\psi_i: \spec \E(s)\ll t\rr \to \spec \E[\xu^\Gamma]$ given by the $\E$-algebra homomorphisms \[\E[\xu^\Gamma] \to \E(s)\ll t\rr: \xu^{\mfm} \mapsto s^{\mfn_i(\mfm)} t^{\mfn_{i+1}(\mfm)}\] for $1 \le i \le l-2$ is a formal desingularization of $\spec \E[\xu^\Gamma]$.
\end{lem}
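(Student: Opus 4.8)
The plan is to exhibit the classical toric desingularization of $\spec \E[\xu^\Gamma]$ explicitly, read off the exceptional prime divisors, complete along them, and check that the resulting formal prime divisors are exactly the $\psi_i$. First I would recall the toric dictionary: the cone $\tau := \R_{\ge 0}^2$ with lattice $\Gamma^\vee$ corresponds to $\spec \E[\xu^\Gamma]$ (where $\E[\xu^\Gamma]$ is the monoid algebra of $\tau^\vee \cap \Gamma = \Gamma \cap \R_{\ge 0}^2$). Taking the minimal ordered monoid generators $\mfn_1, \dots, \mfn_l$ of $\tau \cap \Gamma^\vee$ as in Figure~\ref{fig.Lattices}, the fan $\Delta$ whose maximal cones are the smooth cones $\langle \mfn_i, \mfn_{i+1} \rangle$ for $1 \le i \le l-1$ refines $\tau$ and is regular (this is precisely the Hirzebruch/continued-fraction resolution, see \cite{FH:1953}, \cite{WF:1993}); hence the associated toric morphism $\rho: Y \to \spec \E[\xu^\Gamma]$ is a proper birational morphism with $Y$ smooth. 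It is an isomorphism over the torus, and its exceptional locus is the union of the torus-invariant prime divisors $D_i$ corresponding to the rays $\R_{\ge 0}\mfn_i$ that are \emph{interior} to $\tau$, i.e.\ for $2 \le i \le l-1$; that is $l-2$ exceptional prime divisors, matching the range $1 \le i \le l-2$ in the statement.

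Next I would identify the completion of $Y$ along each exceptional $D_{i+1}$ (for $1 \le i \le l-2$). Working in the affine chart $U_i := \spec \E[(\langle\mfn_i,\mfn_{i+1}\rangle^\vee) \cap \Gamma]$, smoothness of the cone $\langle\mfn_i,\mfn_{i+1}\rangle$ means this chart is $\A^2_\E = \spec \E[s,t]$ where the coordinates $s,t$ are the monomials dual to $\mfn_i, \mfn_{i+1}$; concretely the inclusion $\E[\xu^\Gamma] \into \E[s,t]$ sends $\xu^\mfm \mapsto s^{\mfn_i(\mfm)} t^{\mfn_{i+1}(\mfm)}$. The divisor $D_{i+1}$ is the coordinate line $\{t = 0\}$ (the ray $\R_{\ge0}\mfn_{i+1}$ is the one ``new'' ray bounding this chart from the side of increasing index), so the local ring at its generic point is $\E(s)[t]_{\langle t\rangle}$ with residue field $\E(s)$, and its $\mathfrak{m}$-adic completion is $\E(s)\ll t\rr$. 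Composing $\spec \E(s)\ll t\rr \to \spec \OO_{Y, D_{i+1}} \to Y \to \spec \E[\xu^\Gamma]$ gives exactly the homomorphism $\E[\xu^\Gamma] \to \E(s)\ll t\rr: \xu^\mfm \mapsto s^{\mfn_i(\mfm)} t^{\mfn_{i+1}(\mfm)}$, i.e.\ $\psi_i$. Thus each $\psi_i$ is the realized formal prime divisor on $Y$ attached to $D_{i+1}$, pushed forward to $\spec \E[\xu^\Gamma]$; in particular each $\psi_i$ genuinely is a formal prime divisor in the sense of Definition~\ref{dfn.FormalDiv} (the rational map $\spec\widehat{A}\ratto \spec\E[\xu^\Gamma]$ extends over the closed point because $\rho$ is a morphism).

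It then remains to verify the three conditions of Definition~\ref{dfn.FormalDesc} for $\SS := \{\psi_1,\dots,\psi_{l-2}\}$ with the desingularization $\rho: Y \to \spec\E[\xu^\Gamma]$. Condition (1): by Corollary~\ref{cor.DivPullback} applied to the proper birational $\rho$, $\rho^*\psi_i$ is the unique formal prime divisor on $Y$ lying over $\psi_i$, and by the construction above that is the realized divisor at $D_{i+1}$ — realized because $D_{i+1}$ is a regular codimension-$1$ point of the smooth surface $Y$. Condition (2): $\supp_{\spec\E[\xu^\Gamma]}(\SS)$ is the closure of the centers $\cent(\psi_i)$; each center is the image under $\rho$ of the generic point of $D_{i+1}$, all of which map to the unique torus-fixed point (the origin), so $\supp_{\spec\E[\xu^\Gamma]}(\SS) = \{\text{origin}\}$, while $\supp_Y(\rho^*\SS) = \bigcup_{i=2}^{l-1} D_i$, which is exactly $\rho^{-1}(\text{origin})$ — the standard fact that the exceptional fiber of a toric resolution over the fixed point is the union of the interior invariant divisors. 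Condition (3): $\rho$ restricts to an isomorphism over the open torus $\spec\E[\xu^\Gamma]\setminus\{\text{origin}\}$ (away from the origin $\spec\E[\xu^\Gamma]$ is already smooth and $\rho$ is an iso there), which is precisely $\rho: Y\setminus\supp_Y(\rho^*\SS) \xrightarrow{\sim} \spec\E[\xu^\Gamma]\setminus\supp_{\spec\E[\xu^\Gamma]}(\SS)$.

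The main obstacle is not any single hard argument but assembling the toric bookkeeping cleanly: one must be careful that the chart coordinate playing the role of ``$t$'' really is the one cutting out an \emph{exceptional} (interior) divisor rather than a strict transform of a boundary divisor, that the numbering $\mfn_i \leftrightarrow \mfn_{i+1}$ of ``neighboring'' generators matches the maximal cones of the subdivision, and that the minimality of the generating sequence is what makes the subdivision the \emph{minimal} resolution (so that there are $l-2$, not more, exceptional divisors). Once the correspondence between the index $i$, the chart $U_i$, and the exceptional divisor $D_{i+1}$ is pinned down, everything else is the standard toric resolution of surface cyclic quotient singularities together with Corollary~\ref{cor.DivPullback}.
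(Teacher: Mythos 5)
Your proposal is correct and follows essentially the same approach as the paper: both construct the toric resolution from the fan subdividing $\R_{\ge 0}^2$ along the rays $\R_{\ge 0}\mfn_i$, identify the $l-1$ affine charts $\spec\E[s,t]$ with the embedding $\xu^{\mfm}\mapsto s^{\mfn_i(\mfm)}t^{\mfn_{i+1}(\mfm)}$, and complete along the exceptional divisors $\{t=0\}$ in charts $1,\dots,l-2$ to obtain $\E(s)\ll t\rr$. Your write-up is more explicit than the paper's (which leaves the verification of the three conditions of Definition~\ref{dfn.FormalDesc} to the reader), but the underlying argument is the same.
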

\begin{proof}
Desingularizations of toric schemes can be constructed using special fans in the dual cone and lattice $\Gamma^{\vee} \cap \R_{\ge 0}^2$ (see, {\it e.g.}, \cite{WF:1993,DC:2000}). Applying this construction to the fan whose one-dimensional faces are given by the $\mfn_i$ one obtains a desingularization $\pi$. In fact the morphisms $\pi_i: \spec \E[s,t] \to \spec \E[\xu^\Gamma]$ given by $\E[\xu^\Gamma] \to \E[s,t]: \xu^{\mfm} \mapsto s^{\mfn_i(\mfm)} t^{\mfn_{i+1}(\mfm)}$ for $1 \le i \le l-1$ are isomorphic to the restriction of $\pi$ to an open covering by affine charts.

The exceptional locus ({\it i.e.}, the $\pi$-preimage of the isolated singularity) is a finite union of divisors. We get a formal description of $\pi$ by completing the local rings along these divisors. They are given, for example, by the prime ideals $\langle t \rangle \subset \E[s,t]$ in each of the charts $1,\dots,l-2$. For the completions of the local rings we then have trivially $\widehat{\E[s,t]_{\langle t\rangle}} \cong \E(s)\ll t\rr$.
\end{proof}

\begin{rem}[Minimal Desingularization by Point Blow Ups]\label{rem.MinimalDesing}
The desingularization described by the above set of morphisms is the \emph{minimal} one obtained (up to isomorphism) by subsequently blowing up isolated singular points. To show this one observes that every single such blow up gives (non-affine) toric surfaces that are described by fans in $\Gamma^{\vee} \cap \R_{\ge 0}^2$. Elementary arguments about the lattice and its dual show that the one-dimensional faces of these lattices are always given by one of the vectors $\mfn_i$. On the other hand any fan defined by a proper subset of the generators is associated to a still singular surface.
\end{rem}

\begin{excont}{run.FormalDesing}
This lemma can be applied as follows. The normalization $\widetilde{X_0}$ of $X_0$ has an isolated singularity which is formally isomorphic to the distinguished germ of a toric surface with coordinate ring $\Q[(u,v)^{\Gamma}]$.

The lemma says that this toric surface is formally desingularized by mapping $(u,v)^{\mfm} \mapsto s^{\mfn_{i}(\mfm)}t^{\mfn_{i+1}(\mfm)} \in \Q(s)\ll t\rr$ for $i \in \{1,2\}$ with $\mfn_i$ as in Figure~\ref{fig.Lattices}. Let's look at this for $i=1$ in terms of algebra generators $v^{1/2}, u^{1/3}v^{1/6}, u \in \Q[(u,v)^{\Gamma}]$. We have to map
\begin{align*}
v^{\tfrac{1}{2}} & \mapsto s^{\langle (0,6), (0,1/2) \rangle} t^{\langle (1,4), (0,1/2) \rangle} = s^3 t^2, \\
u^{\tfrac{1}{3}}v^{\tfrac{1}{6}} & \mapsto s^{\langle (0,6), (1/3,1/6) \rangle} t^{\langle (1,4), (1/3,1/6) \rangle} = s t, \\
u & \mapsto s^{\langle (0,6), (1,0) \rangle} t^{\langle (1,4), (1,0) \rangle} = t.
\end{align*}

Composing with the homomorphism induced by the rational parametrization $(\sigma, \alpha)$ we get homomorphisms $\varphi_i: \Q[u,v][w]/\langle f_0 \rangle \to \Q(s)\ll t \rr$:
\begin{align*}
\varphi_1: & ~u \mapsto -8s^{6}t, v \mapsto -t^{4},
w \mapsto - 8s^6t^7 + 8s^8t^8 - 4s^{10}t^9 + s^{14}t^{11} - \tfrac{1}{2}s^{18}t^{13} + \dots, \\
\varphi_2: & ~u \mapsto -8st^2, v \mapsto -s^4t^2,
w \mapsto - 8s^7t^5 + 8s^8t^6 - 4s^9t^7 + s^{11}t^9 - \tfrac{1}{2}s^{13}t^{11} + \dots
\end{align*}
The two induced morphisms $\spec \Q(s)\ll t \rr \to X_0$ are the formal prime divisors centered at the origin that become realized on a desingularization of $\widetilde{X_0}$ by point blow ups.

But this is not yet a formal desingularization of $X_0$. As can be seen in Figure~\ref{fig.EmbDiscDesing}, the singular locus of $X_0$ is the union of the $u$-axis and the $v$-axis. So there should also be formal prime divisors supported on these lines. Let's first consider $u=0$. By mapping $u \mapsto t$ and $v \mapsto s$ we transform $f_0$ to $w^6 + 3s^3t^2w^4 + s^5t^4w^3 + 3s^6t^4w^2 + s^9t^6 \in \Q(s)[t][w]$. Now we compute a univariate rational parametrization $(\sigma, \alpha)$: \[\sigma: t \mapsto \tfrac{-8}{s^5} t,~ \alpha = - \tfrac{8}{s^5} \gamma t^{\tfrac{1}{2}} - \tfrac{8}{s^5} t^{\tfrac{2}{3}} + \tfrac{4}{s^8} \gamma t^{\tfrac{5}{6}} + \tfrac{1}{s^{11}} \gamma t^{\tfrac{7}{6}} + \tfrac{1}{2s^{14}} \gamma t^{\tfrac{3}{2}} + \dots \in \Q(s)[\gamma]\ll t^{(1/6)\Z} \rr\] Here $\gamma$ is an algebraic element satisfying $\gamma^2 + s^3 = 0$. Canceling exponent denominators this yields a homomorphism $\varphi_0: \Q[u,v][w]/\langle f_0 \rangle \to \Q(s)[\gamma]\ll t \rr$:
\[
\varphi_0: u \mapsto \tfrac{-8}{s^5} t^6, v \mapsto s, w \mapsto - \tfrac{8}{s^5} \gamma t^3 - \tfrac{8}{s^5} t^4 + \tfrac{4}{s^8} \gamma t^5 + \tfrac{1}{s^{11}} \gamma t^7 + \tfrac{1}{2s^{14}} \gamma t^9 + \dots
\]
This homomorphism corresponds to completing the germ at a generic point in $\widetilde{X_0}$ above $u=0$ by Lemma~\ref{lem.NormCompl}. This germ is not modified by subsequent point blow ups. So we get a further formal prime divisor. A last one is supported on $v=0$:
\[
\varphi_3: u \mapsto s, v \mapsto -64s^{10}t^6, w \mapsto - 512s^{16}t^{9} + 512s^{18}t^{10} - 256s^{20}t^{11} + 64s^{24}t^{13} + \dots
\]

Note that this procedure explicitly produces the residue fields at the generic points of the exceptional divisors in a desingularization. They correspond to the coefficient fields of the power series. Finally, composing all four formal prime divisors with the morphism $X_0 \to X$ which was obtained by the desingularization of the discriminant curve, we get a part of a formal desingularization of $X$.
\end{excont}

\subsection{A Divide and Conquer Approach}

Now we want to cast the theory of the previous paragraph into explicit algorithms. We want to give a formal description of a Jung desingularization of a ramified covering $(\pi: W \to V, X)$. Note that such a desingularization is always relative to an embedded desingularization $\rho: V_\rho \to V$ of the discriminant curve $D_{\pi\vert_X}$. Because then $(\pi_\rho: W_\rho \to V_\rho, X_\rho)$ for $W_\rho := W \times_V V_\rho$ and $X_\rho := X \times_W W_\rho$ is a nicely ramified covering and we can define the Jung desingularization $\Pi_\rho: Y_\rho \to X$ to be obtained by normalizing $X_\rho$ and successively blowing up singular points, see Remark~\ref{rem.MinimalDesing}.

If we wanted to avoid redundant blow ups, we could already fix the embedded desingularization $\rho$ to be minimal, {\it i.e.}, obtained by blowing up a point of the discriminant curve only when it is a non-normal crossing singularity. For computational reasons, we choose a slightly different desingularization, see Remark~\ref{rem.AlmostMinimal} below.
Also, the definition of a formal description $\SS$ of $\Pi_\rho$ leaves a bit of choice for $\SS$, {\it e.g.}, one may always add formal prime divisors which are realized on the desingularization. We get rid of this ambiguity by requiring that $\supp_X(\SS) = \pi^{-1}(D_{\pi\vert_X})$ and say we compute a \emph{formal description of $\Pi_{\rho}$ above $D_{\pi\vert_X}$}.

\begin{rem}[Divide and Conquer Paradigm]\label{rem.DivAndConqPara}
We will desingularize the discriminant curve and compute the formal prime divisors for the surface at the same time. The substitutions involved in computing the curve desingularization make the defining equations more complicated. Therefore our paradigm must be to compute formal prime divisors ``as early'' as possible. In other words, if we know that the surface (resp.\ its normalization) remains unchanged in a subset (up to isomorphism) by further blow ups of the discriminant curve we immediately compute the formal prime divisors centered in that set.

Let $\rho: V_\rho \to V$ be an embedded desingularization of the discriminant curve. Our divide and conquer approach (in particular Algorithm~\ref{alg.DesingRecursive} below) relies on the following facts:
\begin{itemize}
\item
Let $p \in D_{\pi\vert_X}$ be a point s.t.\ $\rho$ is not an isomorphism at $p$. Let $\rho_0 : V' \to V$ be the blow up at $p$ and $(\pi': W' \to V', X')$ the induced ramified covering. Then $\rho$ factors as $\rho = \rho_1 \rho_0$ where $\rho_1 : V_{\rho} \to V'$ is an embedded desingularization of $D_{\pi'\vert_{X'}}$, $X_{\rho} \to X$ factors through $X' \to X$ and also $\pi'^{-1}(D_{\pi'\vert_{X'}})$ must be equal to the support of the pullback of $\SS$. So it is equivalent to compute the formal description of $\Pi_{\rho_1}$ above $D_{\pi'\vert_{X'}}$.
\item
Now $\rho_0$ is an isomorphism everywhere except at $p$. We can split the computation of the formal desingularization $\Pi_{\rho}$ into two parts; the computation of formal prime divisors which are not centered above $p$ on the one hand and those which are centered above $p$, or equivalently, whose pullbacks (see Corollary~\ref{cor.DivPullback}) are centered above the exceptional divisor $\rho_0^{-1}(p)$. Computing the latter will be delegated to a recursive call.
\item
When blowing up the (not necessarily rational) point $p$ we may first apply a morphism to the projection plane that induces a formal isomorphism at $p$ because of Lemma~\ref{lem.NormCompl} and Lemma~\ref{lem.BlowCompl}, compare Remark~\ref{rem.FormalIsos} below.
\end{itemize}
\end{rem}

Finally, Example~\ref{run.FormalDesing} has shown that in the case of nicely ramified coverings we have to compute formal prime divisors in two ways: Those which are centered above the components of the discriminant curve are obtained using Lemma~\ref{lem.RatParDecomp} with $n=1$, and those which are centered above the normal crossings of the discriminant curve using a combination of Lemma~\ref{lem.RatParDecomp} with $n=2$, Lemma~\ref{lem.ToricSing} and Lemma~\ref{lem.BlowCompl}.

\subsection{The Algorithm}

In the following algorithmic descriptions we allow subsets of a set $A$, which in our notation will be elements of $2^A$, as data types. These will either be finite sets or they will be finitely generated ideals of a ring $A$. So it is clear that they can be represented. For simplicity of notation, we also allow passing of homomorphisms from polynomial rings in a finite number of variables. They can obviously be represented by the images of their generators. We assume that we can represent power series which will be explained later in Section~\ref{sec.AlgSeries}. If $\phi: A \to B$ is a homomorphism of rings, we write again $\phi\lift{w}: A[w] \to B[w]$ for its lifting to the corresponding polynomial rings by coefficient-wise application and $\map_\phi: 2^A \to 2^B$ for the function on subsets defined by element-wise application.

Let $X \subset \P_{\E}^3$ be a closed hypersurface. Following Section~\ref{sec.RamCover}, we first have to produce a ramified covering. This is done in algorithm {\tt DesingGlobal}. By $\overline{\E}$ (resp.\ $\overline{\E(s)}$) we denote the algebraic closure of $\E$ (resp.\ of the rational function field). If it shows up in the return type of a signature we actually mean that the result involves some finite field extension (of transcendence degree $1$), {\it i.e.}, we do not rely on a system for computing with algebraic closures.

The algorithm will be formulated for reduced surfaces, see Remark~\ref{rem.DesingReduced}. This is due to the fact that we will transform our surface by extending the base field in certain steps and cannot assure that the transformed surface remains integral, even when the original surface was, see Remark~\ref{rem.FormalIsos} below.

\begin{algorithm}[H]
\caption{$\texttt{DesingGlobal}(F : \E[x_0,\dots,x_3]) : 2^{\E[x_0,\dots,x_3] \to \overline{\E(s)}\ll t\rr}$}\label{alg.DesingGlobal}
\begin{algorithmic}[1]
\REQUIRE A squarefree homogeneous polynomial $F \neq 0$.
\ENSURE  A finite set of homomorphisms $\E[x_0,\dots,x_3] \to \F\ll t\rr$ factoring through $\E[x_0,\dots,x_3]/\langle F \rangle$ s.t.\ the induced morphisms $\spec \F\ll t\rr  \to \proj \E[x_0,\dots,x_3]/\langle F \rangle$ are a formal desingularization.
\STATE Let $\phi: \E[x_0,\dots,x_3] \to \E[x_0,\dots,x_3]$ be a linear automorphism s.t.\ $\phi(F)(1,0,0,0) \neq 0$;
\STATE $\psi_1: \E[x_0,\dots,x_3] \to \E[u,v][w]: x_0 \mapsto w, x_1 \mapsto 1, x_2 \mapsto u, x_3 \mapsto v$;
\STATE $S := \map_{\rho \mapsto \rho \psi_1 \phi}\texttt{DesingLocal}(\psi_1 \phi(F), \langle 0 \rangle)$;
\STATE $\psi_2: \E[x_0,\dots,x_3] \to \E[u,v][w]: x_0 \mapsto w, x_1 \mapsto v, x_2 \mapsto 1, x_3 \mapsto u$;
\STATE $S := S \cup \map_{\rho \mapsto \rho \psi_2 \phi}\texttt{DesingLocal}(\psi_2 \phi(F), \langle v \rangle)$;
\STATE $\psi_3: \E[x_0,\dots,x_3] \to \E[u,v][w]: x_0 \mapsto w, x_1 \mapsto u, x_2 \mapsto v, x_3 \mapsto 1$;
\STATE $S := S \cup \map_{\rho \mapsto \rho \psi_3 \phi}\texttt{DesingLocal}(\psi_3 \phi(F), \langle u, v \rangle)$;
\STATE \return\ $S$;
\end{algorithmic}
\end{algorithm}

In line $1$ we choose a linear automorphism of $\P_{\E}^3$ (represented by $\phi$) s.t.\ the preimage of $X$ under this automorphism is Noether normalized by a projection onto the plane $x_0 = 0$.

\begin{rem}
The automorphism maps $(1,0,0,0)$ to a point $p \not\in X$. In an actual implementation one should find $p$ s.t.\ most of its coordinates are zero and the rest are small integers. This preserves sparsity in $\phi(F)$ and keeps coefficients small.
\end{rem}

For convenience of description, we will actually assume that $\phi$ is the identity. Then the ramified covering $(\pi: W \to V, X)$ is given as follows: We set $W := \P_{\E}^3 \setminus \{ (1:0:0:0) \}$, $V$ is the plane $x_0 = 0$, $\pi$ is the corresponding linear projection and $X$ is defined by the vanishing of $F$ monic in $x_0$.

The algorithm produces a set of homomorphisms $S$ representing a formal desingularization. Therefore we cover $V$ by open subsets $x_i \neq 0$ (given by the $\psi_i$) for $1 \le i \le 3$ and call algorithm {\tt DesingLocal} for each of those in lines $2$ to $7$. The latter algorithm produces formal desingularizations of the respective affine subsets. Because of the huge overlaps we add \emph{focus} ideals to each call.

\begin{algorithm}[H]
\caption{$\texttt{DesingLocal}(f : \E[u,v][w], \FF: 2^{\E[u,v]}) : 2^{\E[u,v][w] \to \overline{\E(s)}\ll t\rr}$}\label{alg.DesingLocal}
\begin{algorithmic}[1]
\REQUIRE A squarefree polynomial $f \neq 0$, monic in $w$, and a focus ideal $\FF$.
\ENSURE  A finite set of homomorphisms $\E[u,v][w] \to \F\ll t\rr$ factoring through $\E[u,v][w]/\langle f \rangle$ s.t.\ the induced morphisms $\spec \F\ll t\rr  \to \spec \E[u,v][w]/\langle f \rangle$ form the subset of a formal desingularization which is centered above the closed subset defined by $\FF$.
\STATE $d := \texttt{SquareFreePart}(\disc_w(f))$; $E := \texttt{IrredFactors}(d)$;
\STATE $S := \bigcup_{e \in E \text{ with } \FF \subseteq \langle e \rangle} \texttt{DivisorsAboveCurve}(f, e)$;
\FOR{$(u_0,v_0) \in \texttt{ZeroSet}(\FF + \langle d, \partial d / \partial u, \partial d / \partial v \rangle)$}
\STATE $\psi: \E[u,v] \to \E'[u,v]: u \mapsto u + u_0, v \mapsto v + v_0$;
\STATE $S := S \cup \map_{\rho \mapsto \rho \psi\lift{w}}\texttt{DesingRec}(\psi\lift{w}(f), \{\psi(e) \mid e \in E \text{ and } \psi(e)(0,0) = 0\})$;
\ENDFOR
\STATE \return\ $S$;
\end{algorithmic}
\end{algorithm}

In line $1$ we compute the defining equation $d$ of the \emph{reduced} discriminant curve and its factors $E$. For all $e \in E$ the prime ideal $\langle e \rangle$ corresponds to the generic point $p \in V$ of an irreducible component of the discriminant curve.

No matter how the desingularization $\rho: V_{\rho} \to V$ of the discriminant curve looks like, it is always a succession of point blow ups. Therefore $p$ will be isomorphically contained in $V_\rho$. The same holds for points in the normalization of $X$ lying above $p$. According to our paradigm in Remark~\ref{rem.DivAndConqPara} we compute the formal prime divisors centered above these $p$ already at this stage of the algorithm, see line $2$, by calling algorithm {\tt DivisorsAboveCurve} (see Algorithm~\ref{alg.DivisorsAboveCurve}) for each $e \in E$ which is in focus.

In line $3$ we compute the singular locus of the reduced discriminant curve, more precisely, that part which is in focus. For each of its closed points $p$ we want to find the formal prime divisors centered above $p$. Therefore we apply the homomorphism $\psi$ of line $4$ (which corresponds to moving $p$ to the origin) and then call algorithm {\tt DesingRecursive} in line $5$.

\begin{rem}[Exploiting Formal Isomorphisms]\label{rem.FormalIsos}
At this point we would like to mention that $\psi$ may involve an algebraic field extension, namely, by the residue field $\E'$ of $p$. Therefore $\psi$, which looks like a mere translation, is not an isomorphism. Nevertheless the induced morphism of schemes $\spec \E'[u,v] \to \spec \E[u,v]$ is formally isomorphic at $p$ by Corollary~\ref{cor.PtCompl}. (Note that we can choose a system $f_1,f_2 \in \E[u,v]$ of generators of the maximal ideal corresponding to $p$ s.t.\ $f_1 \in \E[u]$ is irreducible and the image of $f_2$ in $(\E[u]/\langle f_1 \rangle)[v]$ is also irreducible.) But for computing a formal desingularization, we may well pass to a formally isomorphic scheme as a consequence of Lemma~\ref{lem.NormCompl} and Lemma~\ref{lem.BlowCompl}. Note that the introduction of a field extension may further split the defining equation, {\it i.e.}, $\psi\lift{w}(f)$ may be reducible even though $f$ is not. Also the discriminant factors might split again. This is another (more important) reason for computing the formal prime divisors above the components of the discriminant already in line $2$.
\end{rem}

\begin{rem}[Auxiliary Functions]\label{rem.AuxFunc}
The above algorithm depends on a couple of auxiliary functions which we are not giving in detail, their names are mainly self-explanatory: {\tt SquareFreePart} should compute the squarefree part of a polynomial and {\tt IrredFactors} is supposed to produce the set of irreducible factors of a polynomial. A comment on {\tt ZeroSet} is in order. It expects a zero-dimensional ideal $\FF \subseteq \E[x_1,\dots,x_n]$. It should return a finite set of $n$-tuples s.t.\ for each maximal ideal $\mathfrak{f}$ containing $\FF$ there is exactly one tuple $(\xi_1, \dots, \xi_n) \in (\E')^n$ s.t.\ $\E \subseteq \E'$ and the induced homomorphism $\E[x_1,\dots,x_n] \to \E': x_i \mapsto \xi_i$ lifts to an isomorphism from $\E[x_1,\dots,x_n]/\mathfrak{f}$.
\end{rem}

\begin{rem}
Computing the squarefree part and factorization was done in Algorithm~\ref{alg.DesingLocal} to keep the number of parameters small. Of course it would fit more naturally in Algorithm~\ref{alg.DesingGlobal} to avoid multiple computations.
\end{rem}

The recursive algorithm {\tt DesingRecursive} now implements the \emph{divide and conquer} paradigm, see again Remark~\ref{rem.DivAndConqPara}.

\begin{algorithm}[H]
\caption{$\texttt{DesingRecursive}(f : \E[u,v][w], E : 2^{\E[u,v]}) : 2^{\E[u,v][w] \to \overline{\E(s)}\ll t\rr}$}\label{alg.DesingRecursive}
\begin{algorithmic}[1]
\REQUIRE A squarefree polynomial $f \neq 0$, monic in $w$, and a set of polynomials $E$ s.t.\ $e(0,0) = 0$ for all $e \in E$ and $\prod_{e \in E}e$ is the squarefree part of $\disc_w(f)$ in the local ring $\E[u,v]_{\langle u,v \rangle}$.
\ENSURE  A finite set of homomorphisms $\E[u,v][w] \to \F\ll t\rr$ factoring through $\E[u,v][w]/\langle f \rangle$ s.t.\ the induced morphisms $\spec \F\ll t\rr  \to \spec \E[u,v][w]/\langle f \rangle$ form the subset of a formal desingularization which is centered above the origin.
\IF{$\texttt{IsNormalCrossing}(E)$}
\STATE \return\ $\texttt{DivisorsAboveCrossing}(f,E)$;
\ENDIF
\STATE $\phi_u: \E[u,v] \to \E[u,v]: u \mapsto uv, v \mapsto v$; $\phi_v: \E[u,v] \to \E[u,v]: u \mapsto v, v \mapsto uv$;
\STATE $f_u := \phi_u\lift{w}(f)$; $f_v := \phi_v\lift{w}(f)$;
\STATE $E_u := \map_{e \mapsto \phi_u(e)/v^{\ord(e)}}(E)$; $E_v := \map_{e \mapsto \phi_v(e)/v^{\ord(e)}}(E)$;
\STATE $S := \map_{\rho \mapsto \rho \phi_u\lift{w}}\texttt{DivisorsAboveCurve}(f_u, v)$;
\FOR{$(u_0,v_0) \in \texttt{ZeroSet}(\langle \prod_{e \in E_u}e, v \rangle)$}
\STATE $\psi: \E[u,v] \to \E'[u,v]: u \mapsto u + u_0, v \mapsto v + v_0$;
\STATE $S := S \cup \map_{\rho \mapsto \rho \psi \lift{w} \phi_u \lift{w}} \texttt{DesingRecursive}(\psi\lift{w}(f_u),$ \\
\mbox{}\hspace{5cm}$\{\psi(e) \mid e \in E_u \text{ and } \psi(e)(0,0) = 0\} \cup \{v\})$;
\ENDFOR
\IF{exists $e \in E_v$ s.t.\ $e(0,0) = 0$}
\STATE $S := S \cup \map_{\rho \mapsto \rho \phi_v \lift{w}}\texttt{DesingRecursive}(f_v,$ \\
\mbox{}\hspace{5cm}$\{e \mid e \in E_v \text{ and } e(0,0) = 0\} \cup \{v\})$;
\ENDIF
\STATE \return\ $S$;
\end{algorithmic}
\end{algorithm}

In line $1$ we check, whether the origin of the reduced discriminant curve is a normal crossing. If this is the case we know (see Corollary~\ref{cor.ToroidalSing}) that the points above the origin in the normalization are toroidal singularities that can be desingularized by a succession of blow ups in isolated singular points. The corresponding formal prime divisors are computed by algorithm {\tt DivisorsAboveCrossing} (see Algorithm~\ref{alg.DivisorsAboveCrossing}) and returned.

Otherwise we have to modify the discriminant curve by blowing up the origin. The two charts of the blow up are computed in lines $3$ to $5$: We determine the defining equations $f_u$ and $f_v$ of the transformed surface and also modify the discriminant factors accordingly.

Note that the homomorphisms are such that the exceptional divisor is given by $v=0$ in \emph{both} charts. Over this exceptional divisor there are centered a couple of formal prime divisors. By the same reasoning as for Algorithm~\ref{alg.DesingLocal} (see Remark~\ref{rem.FormalIsos}) we compute them immediately by calling {\tt DivisorsAboveCurve} in line $6$.

Now we have to consider the points on the exceptional divisor which are singular. In one of the charts they are given by the set of line $7$. As in Algorithm~\ref{alg.DesingLocal} we move these points to the origin and go into recursion, see lines $8$ and $9$. Now there is possibly one singular point left to consider, namely, the origin of the other chart. It is treated in line $11$.

To complete the algorithm, it remains to show how to compute the formal prime divisors which are centered above irreducible components or normal crossings of the reduced discriminant curve.

\subsubsection{Divisors above Generic Points of the Discriminant Curve}

We now give an algorithm for computing the formal prime divisors which are centered above generic points of the discriminant curve. This is easy by what we have developed so far.

\begin{algorithm}[H]
\caption{$\texttt{DivisorsAboveCurve}(f : \E[u,v][w], e : \E[u,v]) : 2^{\E[u,v][w] \to \overline{\E(s)}\ll t\rr}$}\label{alg.DivisorsAboveCurve}
\begin{algorithmic}[1]
\REQUIRE A squarefree polynomial $f \neq 0$, monic in $w$, and an irreducible factor $e$ of $\disc_w(f)$.
\ENSURE  A finite set of homomorphisms $\E[u,v][w] \to \F\ll t\rr$ factoring through $\E[u,v][w]/\langle f \rangle$ s.t.\ the induced morphisms $\spec \F\ll t\rr  \to \spec \E[u,v][w]/\langle f \rangle$ form the subset of a formal desingularization which is centered above $\langle e \rangle$.
\STATE $\F_0 := \totfrac(\E[u,v]/e)$; Let $u_0, v_0 \in \F_0$ be the natural images of $u, v$;
\IF{$\partial e / \partial v \neq 0$}
\STATE $\phi: \E[u,v] \to \F_0[t]: u \to u_0, v \to v_0 + t$;
\ELSE
\STATE $\phi: \E[u,v] \to \F_0[t]: u \to u_0 + t, v \to v_0$;
\ENDIF
\STATE $S := \emptyset$;
\FOR{$(\sigma, \alpha) \in \texttt{Param}(\phi\lift{w}(f))$}
\STATE 
$\psi: \F\ll t^{(1/e)\Z}\rr \to \F\ll t\rr: \gamma \mapsto \texttt{Evaluate}(\gamma, (e), t)$;
\COMMENT{assuming $\alpha \in \F\ll t^{(1/e)\Z}\rr$}
\STATE $S := S \cup \{\E[u,v][w] \to \F\ll t\rr : u \mapsto \psi\sigma\phi(u), v \mapsto \psi\sigma\phi(v), w \mapsto \psi(\alpha) \}$;
\ENDFOR
\STATE \return\ $S$;
\end{algorithmic}
\end{algorithm}

In lines $1$ to $5$ we construct a homomorphism $\phi: \E[u,v] \to \F_0[t]$ inducing an isomorphism from the completed localization at $\langle e \rangle$ to $\F_0\ll t \rr$, see Corollary~\ref{cor.PtCompl}. Therefore the completions of the localizations of $\intclos{\E[u,v][w]/\langle f \rangle}$ at prime ideals above $\langle e \rangle$ are isomorphic to those of $\intclos{\F_0\ll t\rr/\langle \phi\lift{w}(f) \rangle}$ above $\langle t \rangle$ by Lemma~\ref{lem.NormCompl}. Lemma~\ref{lem.RatParDecomp} tells us how these completions can be computed using a complete set of rational parametrizations. This is done in lines $7$ to $9$ using the results of a call to {\tt Param} (see Algorithm~\ref{alg.Param} in Section~\ref{sec.RatParams}). The homomorphism $\psi$ defined via {\tt Evaluate} (see Algorithm~\ref{alg.Evaluate}) is just mapping $t^{1/e} \mapsto t$ for cosmetic reasons, {\it i.e.}, getting rid of denominators.

\subsubsection{Divisors above Normal Crossings of the Discriminant Curve}

First we fill a gap in Algorithm~\ref{alg.DesingRecursive} and show how to test the normal crossing property for a set of curves.

\begin{algorithm}[H]
\caption{$\texttt{IsNormalCrossing}(E : 2^{\E[u,v]}) : \texttt{Boolean}$}\label{alg.IsNormalCrossing}
\begin{algorithmic}[1]
\REQUIRE A set of squarefree polynomials $E$ s.t.\ $e(0,0) = 0$ for all $e \in E$.
\ENSURE  \texttt{true} iff the curves defined by $E$ are \emph{considered} normal crossing at the origin.
\IF{$E$ is not of the form $\{v, e\}$}
\STATE \return\ \texttt{false};
\ELSE
\STATE \return\ $\partial e/\partial u (0,0) \neq 0$;
\ENDIF
\end{algorithmic}
\end{algorithm}

Note that in general $E$ describes a set of curves with normal crossing at the origin when $E = \{e_1, e_2\}$ and $\det(\partial (e_1, e_2)/\partial (u,v)) (0,0) \neq 0$. But this algorithm returns {\tt true} only for the special situation that one of the curves is actually $v=0$.

\begin{rem}[Almost Minimal Jung Desingularizations]\label{rem.AlmostMinimal}
Although Algorithm~\ref{alg.IsNormalCrossing} does in fact test for normal crossings of a special form the overall algorithm will eventually terminate; Computing the point blow ups as in Algorithm~\ref{alg.DesingRecursive} guarantees that the exceptional divisor is always of the form $v=0$ in both charts. It is known (see \cite[Thm.\ 1.47]{JK:2007}) that one can compute an embedded desingularization by a finite number of point blow ups. If one of the normal crossings is not of the above form, then our algorithm computes an additional blow up but terminates at the next level. In other words, the computed Jung desingularization belongs to an \emph{almost minimal} embedded desingularization of the discriminant curve. The benefit is that the homomorphism in line $2$ of the next algorithm is so easy to compute. One could also use usual normal crossings and compute more complicated homomorphisms.
\end{rem}

\begin{algorithm}[H]
\caption{$\texttt{DivisorsAboveCrossing}(f : \E[u,v][w], E : 2^{\E[u,v]}) : 2^{\E[u,v][w] \to \overline{\E}(s)\ll t\rr}$}\label{alg.DivisorsAboveCrossing}
\begin{algorithmic}[1]
\REQUIRE A squarefree polynomial $f \neq 0$, monic in $w$, and a set of polynomials $E = \{v, e\}$ s.t.\ $e(0,0) = 0$, $\partial e/\partial u (0,0) \neq 0$ and $ve$ is the squarefree part of $\disc_w(f)$ in the local ring $\E[u,v]_{\langle u,v \rangle}$.
\ENSURE  A finite set of homomorphisms $\E[u,v][w] \to \F\ll t\rr$ factoring through $\E[u,v][w]/\langle f \rangle$ s.t.\ the induced morphisms $\spec \F\ll t\rr  \to \spec \E[u,v][w]/\langle f \rangle$ form the subset of a formal desingularization which is centered above the origin.
\STATE Let $g := e(z,v') - u' \in \E[u',v'][z]$;
\STATE $\phi: \E[u,v] \to \E\ll u', v'\rr: v \mapsto v', u \mapsto \texttt{ImplicitFunction}(g)$;
\STATE $S := \emptyset$;
\FOR{$(\sigma, \alpha) \in \texttt{Param}(\phi\lift{w}(f))$}
\STATE Let $\mfn_1, \dots, \mfn_l \in \Z^2$ be the sequence of generators of $\Gamma^{\vee} \cap \R_{\ge 0}^2$;\\
\COMMENT{assuming $\alpha \in \E'\ll (u',v')^{\Gamma}\rr$ and ordering generators as in Figure~\ref{fig.Lattices}}
\FOR{$1 \le i \le l-2$}
\STATE $\psi: \E'\ll(u',v')^{\Gamma}\rr \to \E'(s)\ll t \rr: \gamma \mapsto \texttt{Evaluate}(\gamma, (\mfn_i, \mfn_{i+1}), (s, t))$;\\
\COMMENT{where $s,t \in \E'(s)\ll t \rr$}
\STATE $S := S \cup \{ \E[u,v][w] \to \F\ll t\rr : u \mapsto \psi \sigma \phi(u), v \mapsto \psi \sigma \phi(v), w \mapsto \psi(\alpha) \}$;
\ENDFOR
\ENDFOR
\STATE \return\ $S$;
\end{algorithmic}
\end{algorithm}

This algorithm expects that the discriminant curve of the input surface has a normal crossing at the origin, more precisely, the discriminant of the defining equation is $v^{d_1}e^{d_2}$ times a local unit. We want to compute bivariate parametrizations above the origin, but $f$ is not yet normal crossing. We first have to apply a formal isomorphism $\phi$, see line $2$, that maps $v \mapsto v'$ and $e \mapsto u'$, because then the discriminant becomes $v'^{d_1}u'^{d_2}$ up to a unit and the defining equation $\phi\lift{w}(f)$ is quasi-ordinary. To this end the image of $u$ must fulfill the equation $e(\phi(u),v') - u' = 0$ which has a unique solution by the implicit function theorem and is computed by a call to {\tt ImplicitFunction} (see Algorithm~\ref{alg.ImplicitFunction} in Section~\ref{sec.Expand}).

By the same reasoning as for Algorithm~\ref{alg.DivisorsAboveCrossing} the completions of the localizations of $\intclos{\E[u,v][w]/\langle f \rangle}$ at prime ideals above the origin can be computed by bivariate rational parametrization using a call to {\tt Param}, see line $4$. Now assume such a completion is given by $\E'\ll (u',v')^{\Gamma} \rr$.

We know how to compute a formal description of a special desingularization of $\spec \E'[(u',v')^{\Gamma}]$, namely, the one obtained by a minimal number of point blow ups, see Lemma~\ref{lem.ToricSing} and Remark~\ref{rem.MinimalDesing}. We compute the corresponding homomorphisms $\E'\ll(u',v')^{\Gamma}\rr \to \E'(s)\ll t\rr: (u',v')^{\mfm} \mapsto s^{\mfn_i(\mfm)} t^{\mfn_{i+1}(\mfm)}$ by calls to {\tt Evaluate} in line $7$. This desingularization commutes with completion by Lemma~\ref{lem.BlowCompl}, hence, we get a formal desingularization of the toroidal singularity by composing these homomorphisms with the one given by the parametrization, compare line $8$.


\section{Computing with Multivariate Algebraic Power Series}\label{sec.Series}

In order to implement the above algorithms we have to represent multivariate, fractionary, algebraic power series, {\it i.e.}, power series with coefficients in a field $\E$, variables $x_1,\dots,x_n$ and exponents in $\Gamma \cap \R_{\ge 0}^n$ (the non-negative orthant of a full rational lattice) that are roots of non-zero polynomials in $\E[x_1,\dots,x_n][z]$. In the sequel we denote this ring by $\E\ll\xu^{\Gamma}\rr$ (hence, this notation from now means \emph{algebraic}, not formal power series). We must be able to perform a couple of operations:

\begin{enumerate}
\item We have to expand power series up to arbitrary order.
\item We have to compute the power series arising in complete sets of rational parametrizations for quasi-ordinary polynomials and solve equations fulfilling the conditions of the implicit function theorem.
\item Let $\hom(\Gamma,\E^*)$ denote the (commutative, multiplicative) group of homomorphisms from the lattice $\Gamma$ to the multiplicative group $\E^*$ with group operation given by multiplication in the codomain. With $\sigma \in \hom(\Gamma,\E^*)$ we associate the automorphism $\E\ll\xu^{\Gamma}\rr  \to \E\ll\xu^{\Gamma}\rr$ mapping $\xu^{\mfm} \mapsto \sigma(\mfm)\xu^{\mfm}$ and $c \mapsto c$ for $c \in \E$. This action of $\hom(\Gamma,\E^*)$ on $\E\ll\xu^{\Gamma}\rr$ should be computable.
\item If $\alpha \in \E_1\ll\xu^{\Gamma_1}\rr$ is a power series, $\mfn_1, \dots, \mfn_l$ are vectors in $\Gamma_1^{\vee} \cap \R_{\ge 0}^m$ and $(\xi_1, \dots, \xi_l)$ are power series in $\E_2\ll\xu^{\Gamma_2}\rr$ where $\E_2$ is a field extension of $\E_1$, we want to compute the image of $\alpha$ under the homomorphism $\phi: \E_1\ll\xu^{\Gamma_1}\rr \to \E_2\ll\xu^{\Gamma_2}\rr$ which maps $\xu^{\mfm} \mapsto \prod_{1 \le i \le l} \xi_i^{\mfn_i(\mfm)}$ and $c \mapsto c$ for $c \in \E_1$ if such $\phi$ is well-defined.
\end{enumerate}

\begin{rem}[Origin of Requirements]
Requirement (1) just means that (in order to return values) we need a representation in finite terms of series which a priori are infinite objects.

Requirement (2) is obvious. We have to compute rational parametrizations in Algorithms~\ref{alg.DivisorsAboveCurve} and \ref{alg.DivisorsAboveCrossing} in order to compute formal prime divisors in the case of nicely ramified coverings.  The first line of the latter algorithm also implies solving an equation that fulfills the conditions of the implicit function theorem.

Further it would be nice to express certain homomorphisms $\E_1\ll\xu^{\Gamma_1}\rr \to \E_2\ll\xu^{\Gamma_2}\rr$. In general these are given by specifying the images of generators $\xu^{\mfm}$ in a consistent way. But such general homomorphisms are not easy to compute. Instead we concentrate on the two special cases (3) and (4).

Computing the action induced by lattice homomorphisms becomes relevant when computing rational parametrizations. So we need (3) to accomplish (2).

In line $7$ of Algorithm~\ref{alg.DivisorsAboveCrossing}, we find a transformation which is a special case
of requirement (4). Another special case of that requirement is substitution of power series $\xi_i \in \E_2\ll\xu^{\Gamma_2}\rr$ into a polynomial or power series $g \in \E_1\ll x_1,\dots,x_n\rr$ which is an instance with $\Gamma_1 = \Gamma_1^{\vee} = \Z^n$ and $\mfn_i$ the standard basis vectors. Such substitutions occur when we compute the composite of two homomorphisms, {\it e.g.}, in line $9$ of Algorithm~\ref{alg.DivisorsAboveCurve}.

Also note that an even more special case of the last requirement is effectivity of addition and multiplication (with $g = x_1 + x_2$ resp.\ $g = x_1 x_2$).
\end{rem}

Computations with algebraic power series usually involve studying the support and the \emph{Newton Polygon} of the defining equation (see \cite{RW:1978,JM:1995,FB_FRJ:2001,JvdH:2006}), which live in $\Q^{n} \times \Z$ resp.\ $\R^{n+1}$. In order to allow for a nice implementation, we will consider a \emph{flattened} support by assuming that $\Q^n$ is ordered as a group and considering $\Q^{n} \times \Z$ a ``plane''. In this setting, one can do a fair amount of ``convex geometry''.

In fact we will work with two different orderings. Let $q_1,q_2 \in \Q^n$. We have a partial ordering on $\Q^n$ by comparing the total degrees $\lvert q_1 \rvert$ and $\lvert q_2 \rvert$ ({\it i.e.}, the sum of their components) via $\le$ as rational numbers. Further we assume a total ordering $\cle$ which is a refinement of it, {\it i.e.}, $\lvert q_1 \rvert \le \lvert q_2 \rvert$ implies $q_1 \cle q_2$. For a power series $0 \neq \alpha = \sum_{\mfm} \alpha_{\mfm} \xu^{\mfm} \in \E\ll\xu^{\Gamma}\rr$ we can therefore define two supports and two orders, namely, $\supp_{\le}(\alpha) := \{\lvert \mfm \rvert \in \Q \mid \alpha_{\mfm} \neq 0\}$, $\supp_{\cle}(\alpha) := \{\mfm \in \Q^n \mid \alpha_{\mfm} \neq 0\}$, $\ord_{\le}(\alpha) := \min_{\le}(\supp_{\le}(\alpha))$ and $\ord_{\cle}(\alpha) := \min_{\cle}(\supp_{\cle}(\alpha))$. W.r.t.\ the finer ordering, we will also need the initial term $\lt_{\cle}(\alpha) := \alpha_{\ord_{\cle}(\alpha)} \xu^{\ord_{\cle}(\alpha)}$. The degree compatibility is important because it implies that we can easily expand series up to an arbitrary order w.r.t\ the fine ordering if and only if we can expand up to arbitrary total degree.

We also have to study polynomials with power series coefficients. Consider \[0 \neq g = \sum_{0 \le i \le d} \left(\sum_{\mfm} g_{\mfm,i} \xu^{\mfm}\right) z^i \in \E_0\ll\xu^{\Gamma_0}\rr[z].\] Its \emph{support} is $\supp_{\cle}(g) := \{(\mfm,i) \in \Q^n \times \Z \mid g_{\mfm,i} \neq 0\}$. 
We define linear maps \[\varphi_{\mfn}: \Q^n \times \Z \to \Q^n: (\mfm,i) \mapsto \mfm + i \mfn\] for all $\mfn \in \Q_{\ge 0}^n$ and set $\ord_{\cle,\mfn}(g) := \min_{\cle} \{\varphi_{\mfn}(\mfm,i) \mid (\mfm,i) \in \supp_{\cle}(g)\}$. We say that $\supp_{\cle}(g)$ has an \emph{non-trivial} edge of slope $\mfn$ iff $\varphi_{\mfn}$ attains its minimum for \emph{at least two} arguments in $\supp_{\cle}(g)$. Further we define the $\mfn$-th \emph{edge equation} as \[\edge_{\cle,\mfn}(g) := \mathop{\sum_{(\mfm,i) \in \supp_{\cle}(g) \text{ and}}}_{\varphi_{\mfn}(\mfm,i) = \ord_{\cle,\mfn}(g)} g_{\mfm,i} \xu^{\mfm} z^i.\] Then $\supp_{\cle}(g)$ has a non-trivial edge of slope $\mfn$ iff the $\mfn$-th edge equation is not a single term, {\it i.e.}, it is also non-trivial.

Note that $\supp_{\cle}(g)$ has at most finitely many non-trivial edges. Indeed, assume $g = \sum_{l \le i \le k} \gamma_i z^i$ with $\gamma_{l} \neq 0$ and $\gamma_{k} \neq 0$ and let $\mfo_i := \ord_{\cle}(\gamma_i)$ whenever $\gamma_i \ne 0$. Then the only possible slopes are $(i-j)^{-1}(\mfo_j-\mfo_i)$ for $i > j$ and $\mfo_j \cgt \mfo_i$ with $\gamma_i \ne 0$, $\gamma_j \ne 0$.
Denoting the slopes that occur by $\mfn_m$ we could define the ``Newton Polygon'' of $g$ as the set $\{(\mfm,i) \in \Q^n \times \Z \mid l \le i \le k \text{ and } \varphi_{\mfn_m}(\mfm) \cge \ord_{\cle,\mfn_m}(g) \text{ for all } m\}$.

\subsection{Representing Algebraic Power Series}\label{sec.AlgSeries}

In the sequel we suggest a representation which is suitable for the computer algebra system {\tt MAGMA} \cite{MAGMA} and facilitates all of the above operations. We only highlight the essential points. {\tt MAGMA}, like most other computer algebra systems, does not provide polynomials  $f \in \E[\xu^{\Gamma}]$ with fractionary exponents directly. For simplicity of reading, we nevertheless use such polynomials, an implementation being straightforward. We also assume that given $f$ we can ask for its coefficient field $\E$ and exponent lattice $\Gamma$ and that an implicit conversion mechanism is provided for $\E_1[\xu^{\Gamma_1}] \subseteq \E_2[\xu^{\Gamma_2}]$ whenever $\E_1 \subseteq \E_2$ and $\Gamma_1 \subseteq \Gamma_2$. Further it should be understood that a tuple defining a series may recursively depend on other series or even on polynomials with series coefficients. We represent algebraic power series using a \emph{hybrid lazy-exact approach} by finite, acyclic, directed and rooted graphs with nodes of two types:

\begin{description}
\item[Type A] We represent an algebraic power series $\alpha \in \E_2\ll\xu^{\Gamma_2} \rr$ by a tuple
\begin{gather}\label{eqn.TypeA}
(\alpha_0, f)
\end{gather}
where $\alpha_0 \in \E_2[\xu^{\Gamma_2}]$ is an initial segment of $\alpha$ w.r.t.\ $\cle$ and $f = \sum_{0 \le i \le d} \varphi_iz^i \in \E_1\ll\xu^{\Gamma_1}\rr[z]$ are such that $\E_1 \subseteq \E_2$, $\Gamma_1 \subseteq \Gamma_2$ and $f \neq 0$ is squarefree (when considered as a polynomial over $\totfrac(\E_1\ll\xu^{\Gamma_1}\rr)$) and vanishing on $\alpha$.

Such a node in general has as $d+1$ descendants, namely, the coefficients $\varphi_i$. As a special case we allow $f \in \E_1[\xu^{\Gamma_1}][z]$. In this case we store $f$ itself, there are no descendants and the node is terminal.

We also need a technical condition to ensure that $\alpha_0$ identifies $\alpha$ uniquely amongst the roots of $f$.

\begin{algorithm}[H]
\caption{$\texttt{Series}(\alpha_0 : \E_2[\xu^{\Gamma_2}], f : \E_1\ll\xu^{\Gamma_1}\rr[z]) : \E_2\ll\xu^{\Gamma_2}\rr$}\label{alg.Series}
\begin{algorithmic}[1]
\REQUIRE A tuple as in \eqref{eqn.TypeA} fulfilling Condition~\ref{cnd.ValidTypeA}.
\ENSURE  The power series $\alpha$ defined by it.
\STATE Encapsulate data and return object of type ``algebraic power series'';
\end{algorithmic}
\end{algorithm}

\item[Type B] The second type of node is given by a tuple
\begin{gather}\label{eqn.TypeB}
(\alpha, \underline{\mfn}, \underline{\xi})
\end{gather}
and represents the image $\beta := \phi(\alpha)$ under the homomorphism described in the third requirement.  It has $l+1$ descendants, namely, the $\xi_i$ and $\alpha$. Also in this case we need a technical condition necessary for $\phi$ being well-defined.

\begin{algorithm}[H]
\caption{$\texttt{Evaluate}(\alpha : \E_1\ll\xu^{\Gamma_1}\rr, \underline{\mfn} : (\Gamma_1^{\vee} \cap \R_{\ge 0}^n)^l, \underline{\xi} : \E_2\ll\xu^{\Gamma_2}\rr) : \E_2\ll\xu^{\Gamma_2}\rr$}\label{alg.Evaluate}
\begin{algorithmic}[1]
\REQUIRE A tuple as in \eqref{eqn.TypeB} fulfilling Condition~\ref{cnd.ValidTypeB}.
\ENSURE  The power series $\beta$ defined by it.
\STATE Encapsulate data and return object of type ``algebraic power series'';
\end{algorithmic}
\end{algorithm}
\end{description}

Nodes of type B facilitate explicitely the operation of requirement (4) from our list. The following sections describe how to implement the remaining requirements using algorithms based on an idea in \cite{TB_JS:2005a} which is similar in spirit to \cite{JvdH:2006}.

\begin{rem}[Representation Paradigm]\label{rem.RepParadigm}
The reason for calling this representation \emph{lazy-exact} is the following: In order to expand a series up to some order, we will recursively expand its descendants (maybe storing precomputed values) and then compute the approximation of the series under consideration in a \emph{lazy} fashion. On the other hand, using elimination theory and traversing the representation graph recursively, one can compute a (minimal) defining polynomial for any series represented as above. Therefore zero-equality and polynomiality/rationality are decidable. In this sense we speak of an \emph{exact} representation. Since in particular all polynomials (including $0$ and $1$) are representable we get a \emph{computationally effective} ring. All this (and a little bit more) is included in the {\tt MAGMA} implementation. Note, however, that it is advisable to avoid the effective decision algorithms because they depend on nested resultant computations and can be very expensive.
\end{rem}

\subsection{Expanding Algebraic Power Series}\label{sec.Expand}

Now we show how to expand power series in a lazy fashion. For more efficient computations with lazy power series, see \cite{JvdH:2002}. For expanding a power series $\alpha$ given by a tuple \eqref{eqn.TypeA}, we consider
\begin{gather}\label{eqn.DefpolTrans}
g := f(z+\alpha_0) = \sum_{i=0}^d \gamma_i z^i, \alpha_1 := \alpha - \alpha_0, \mfn := \ord_{\cle}(\alpha_1) \text{ and } \mfc := \ord_{\cle}(\gamma_1).
\end{gather}

First we want to find implications of the fact that $g(\alpha_1)=0$. The case $\alpha_1 = 0$ happens if and only if $g(0)=0$ or, equivalently, $\gamma_0 = 0$. Let's assume $\alpha_1 \neq 0$ and set $\alpha' := \lt_{\cle}(\alpha_1)$. Next we study the contribution of the terms $\gamma_i z^i$ to the result under the substitution $z \mapsto \alpha_1$. Whenever $\gamma_i \neq 0$ we find \[\gamma_i \alpha_1^i =\gamma_i (\alpha' + (\alpha_1 - \alpha'))^i = \lt_{\cle}(\gamma_i) \alpha'^i + \dots \text{ (higher order terms)}.\] The minimal order of these expressions is \[\min_{\cle} \{\underbrace{\ord_{\cle}(\lt_{\cle}(\gamma_i) \alpha'^i)}_{\ord_{\cle}(\gamma_i) + i \mfn} \mid 0 \le i \le d \text{ and } \gamma_i \neq 0\} = \ord_{\cle,\mfn}(g).\] Since $g(\alpha_1) = 0$ and $\alpha' \neq 0$ it follows that the terms of order $\ord_{\cle,\mfn}(g)$ must sum up to zero. In particular, there have to be at least two indices $i$ s.t.\ the terms $\lt_{\cle}(\gamma_i) \alpha'^i$ are of order $\ord_{\cle,\mfn}(g)$. In other words, the $\mfn$-th edge equation of $g$ must be non-trivial and $\alpha'$ must be a root of it.

If we can make sure that the data in \eqref{eqn.TypeA} implies that the $\mfn$-th edge is linear of the form $\lt_{\cle}(\gamma_1) z + \lt_{\cle}(\gamma_0) = 0$ then $\alpha'$ is uniquely determined and easy to compute.

\begin{cnd}[Valid Representations of Type A]\label{cnd.ValidTypeA}
With definitions as in \eqref{eqn.DefpolTrans} we require for valid representations of type A that either $\gamma_0 = 0$ or $\edge_{\cle,\mfn}(g)$ is linear.
\end{cnd}

Now substitute $\alpha_0 \mapsto \alpha_0 + \alpha'$, $\alpha_1 \mapsto \alpha_1 - \alpha'$ and $g \mapsto g(z+\alpha')$. A careful analysis of the exponent structure shows that under the above condition $\lt_{\cle}(\gamma_1)$ doesn't change and Condition~\ref{cnd.ValidTypeA} remains valid. Hence we can repeat the argument and compute successively as many terms as needed.

\begin{ex}\label{ex.Expand}
Let $f := z^6 - 3x_2z^4 - \tfrac{1}{64}x_1^2x_2^3z^3 + 3x_2^2z^2 - x_2^3$. Then $f$ has a power series root $\alpha$ starting with $\alpha_0 = x_2^{1/2} + \tfrac{1}{8} x_1^{2/3} x_2$. We work with the degree graded lexicographical ordering $\cle$. First we translate $f$ as in \eqref{eqn.DefpolTrans}:
\begin{align*}
f(z + \alpha_0) = & z^6 +
(- 6x_2^{1/2} + \tfrac{3}{4}x_1^{2/3}x_2)z^5 + \\
&(+ 12x_2 - \tfrac{15}{4}x_1^{2/3}x_2^{3/2} + \tfrac{15}{64}x_1^{4/3}x_2^{2})z^4 + \\
&(- 8x_2^{3/2} + 6x_1^{2/3}x_2^{2} - \tfrac{15}{16}x_1^{4/3}x_2^{5/2} + \tfrac{3}{128}x_1^{2}x_2^{3})z^3 + \\
&(- 3x_1^{2/3}x_2^{5/2} + \tfrac{9}{8}x_1^{4/3}x_2^{3} - \tfrac{9}{128}x_1^{2}x_2^{7/2} - \tfrac{9}{4096}x_1^{8/3}x_2^{4})z^2 + \\
&(- \tfrac{3}{8}x_1^{4/3}x_2^{7/2} + \tfrac{3}{64}x_1^{2}x_2^{4} + \tfrac{9}{2048}x_1^{8/3}x_2^{9/2} - \tfrac{9}{16384}x_1^{10/3}x_2^{5})z + \\
&(- \tfrac{3}{1024}x_1^{8/3}x_2^{5} + \tfrac{9}{16384}x_1^{10/3}x_2^{11/2} - \tfrac{7}{262144}x_1^{4}x_2^{6})
\end{align*}
We find that $f(z + \alpha_0)$ has a linear edge of slope $(\tfrac{4}{3},\tfrac{3}{2})$ and extract the edge equation $- \tfrac{3}{8}x_1^{4/3}x_2^{7/2}z - \tfrac{3}{1024}x_1^{8/3}x_2^{5}  = 0$. The solution is the next term $- \tfrac{1}{128}x_1^{4/3}x_2^{3/2}$ in the expansion of $\alpha$. Next we consider $f(z + \alpha_0 - \tfrac{1}{128}x_1^{4/3}x_2^{3/2})$ and find that it has a linear edge of slope $(\tfrac{8}{3},\tfrac{5}{2})$. Solving the edge equation $- \tfrac{3}{8}x_1^{4/3}x_2^{7/2} + \tfrac{3}{262144}x_1^{4}x_2^{6}=0$ we find the next term $\tfrac{1}{32768}x_1^{8/3}x_2^{5/2}$, and so on: \[\alpha = - x_2^{1/2} + \tfrac{1}{8}x_1^{2/3}x_2 - \tfrac{1}{128}x_1^{4/3}x_2^{3/2} + \tfrac{1}{32768}x_1^{8/3}x_2^{5/2} - \tfrac{1}{4194304}x_1^{4}x_2^{7/2} + \dots\]
\end{ex}

Obviously we do not need to know $g$ completely to do this computation. More precisely, we can expand $\alpha$ up to order less than $\mfo$, if we have approximated $g$ sufficiently well. All the terms of order less than $\mfo$ are determined by a linear edge equation of the form $\lt_{\cle}(\gamma_1)z+\dots = 0$. Therefore, it is sufficient to know the constant term up to order less than $\ord_{\cle,\mfo}(\lt_{\cle}(\gamma_1) z) = \mfc + \mfo$ with $\mfc$ as in \eqref{eqn.DefpolTrans}. Hence, for approximating $g$ it is sufficient to expand the coefficients of $f$ up to order less than $\mfc + \mfo$.

For example, Condition~\ref{cnd.ValidTypeA} is true when considering a power series defined by the implicit function theorem. Therefore solving such equations is trivial:

\begin{algorithm}[H]
\caption{$\texttt{ImplicitFunction}(g : \E_1\ll\xu^{\Gamma_1}\rr[z]) : \E_1\ll\xu^{\Gamma_1}\rr$}\label{alg.ImplicitFunction}
\begin{algorithmic}[1]
\REQUIRE A polynomial $g$ s.t.\ $g(0,\dots,0,0) = 0$ and $\partial g/\partial z(0,\dots,0,0) \neq 0$.
\ENSURE  The unique root $\alpha$ of $g$ s.t.\ $\alpha(0,\dots,0)=0$.
\STATE \return\ $\texttt{Series}(0, g)$;
\end{algorithmic}
\end{algorithm}

Indeed, if $g$ is equal to $\sum_{i=0}^d \gamma_i z^i$, $g(0,\dots,0,0) = 0$ and $\partial g/\partial z(0,\dots,0,0) \neq 0$ then $\ord_{\cle}(\gamma_0) \cgt 0$ (or $\gamma_0 = 0$) and $\ord_{\cle}(\gamma_1) = 0$. If $\gamma_0 \neq 0$ then $\edge_{\cle,\mfn}(g)$ must be linear where $\mfn = \ord_{\cle}(\gamma_0) = \ord_{\cle}(\alpha)$.

\begin{rem}[Universality of Type A]\label{rem.UniTypeA}
Note that for any algebraic power series $\alpha$ there is, by definition, a squarefree polynomial $f$ with polynomial coefficients vanishing on it. This implies that $f(z+\alpha)$ vanishes at $z=0$ with multiplicity one or that the constant coefficient of $f(z+\alpha)$ is zero whereas its linear coefficient does not vanish. This means that the initial term of the linear coefficient is fixed if we consider translations $f(z+\alpha_0)$ by sufficiently large initial segments $\alpha_0$ of $\alpha$. As a consequence we can always find an initial segment $\alpha_0$ s.t.\ Condition~\ref{cnd.ValidTypeA} is fulfilled and every algebraic power series is representable by a single node of type A.

So our representation is already universal. But computing efficiently the last operation of our requirement list is not easy in this representation. So for algorithmic purposes we also allow the second type of node.
\end{rem}

The previous discussion showed how to expand power series represented by a graph consisting of nodes of type A. Since $\cle$ is compatible with total degree this implies that we can expand such series up to arbitrary total degree. Now we consider a power series $\beta$ represented by a tuple \eqref{eqn.TypeB}. Indeed what comes next is best explained in the total degree ordering.

Assume we want to expand $\beta$ up to order less than $o \in \Q$. Therefore we first compute approximations $\widetilde{\xi}_i$ of the descendants $\xi_i$ up to order less than $o$. Then we apply the map $\xu^{\mfm} \mapsto \prod_{1 \le i \le l} \widetilde{\xi}_i^{\mfn_i(\mfm)}$ to each term in an expansion of $\alpha$, sum up the intermediate results and finally truncate at order $o$. For this truncation to be correct, we have to use a sufficiently large expansion of $\alpha$, say, up to order less than $o'$. The only remaining question is how to determine $o'$ from $o$.

To this end let
\begin{gather}\label{eqn.MultContraction}
\widetilde{\mfn} := \left( \sum_{1 \le i \le l} \ord_{\le}(\xi_i)~\mfn_i \right) = (\widetilde{\mfn}_1,\dots,\widetilde{\mfn}_n) \text{ and } c := \min_{\le}\{\widetilde{\mfn}_j \mid 1 \le j \le n\}
\end{gather}
where $\ord_{\le}(0) := \infty$. Let $\phi$ denote the homomorphism $\xu^{\mfm} \mapsto \prod_{1 \le i \le l} \xi_i^{\mfn_i(\mfm)}$ then
\begin{multline*}
\ord_{\le}(\phi(\xu^{\mfm})) = \ord_{\le} \left(\prod_{1 \le i \le l} \xi_i^{\mfn_i(\mfm)}\right) = \left( \sum_{1 \le i \le l} \ord_{\le}(\xi_i)~\mfn_i(\mfm) \right) = \widetilde{\mfn}(\mfm) \\
= \sum_{1 \le j \le n} \widetilde{\mfn}_j \mfm_j \ge \sum_{1 \le j \le n} c \mfm_j = c \lvert \mfm \rvert.
\end{multline*}
This calculation shows two things: First, if $c > 0$ then for any $\gamma = \sum_{\mfm} \gamma_\mfm \xu^{\mfm}$ the sum $\sum_{\mfm} \gamma_\mfm \phi(\xu^{\mfm})$ converges, so $\phi(\gamma)$ is well defined. This is an analogue to the usual condition when substituting formal power series into each other.

\begin{cnd}[Valid Representations of Type B]\label{cnd.ValidTypeB}
For valid representations of type B we require $c>0$ for $c$ defined as in \eqref{eqn.MultContraction}.
\end{cnd}

Second, under this condition, if $\alpha = \sum_{\mfm} \alpha_\mfm \xu^{\mfm}$ then the terms $\alpha_\mfm \xu^{\mfm}$ with $\lvert \mfm \rvert \ge o/c$ contribute terms to $\phi(\alpha)$ of order greater or equal $o$, so we can choose $o' := o/c$.

\begin{rem}[Contraction Constants]
We can give a nice theoretical meaning to the two technical conditions and the deduced algorithms. Namely, the values $\mfc$ and $c$ may be understood as additive resp.\ multiplicative \emph{contraction constants} for certain continuous maps between power series domains with the usual metrics. In the case of Condition~\ref{eqn.TypeA} this map is contractive only in a small enough neighborhood (determined by $\ord_{\cle}(\alpha-\alpha_0)$) of the root $\alpha$. Contractivity makes the represented series well-defined.

In both cases the constants determine how far the descendants of a node have to be expanded. An implementation could store them together with the representing tuples \eqref{eqn.TypeA} and \eqref{eqn.TypeB}. We would also like to mention that for an efficient implementation it is \emph{crucial to truncate intermediate results} (e.g.\ defining polynomials or approximations of descendants) as often as possible to a sufficient precision. The bounds are computed by similar calculations as above. We omit them from the presentation and show only the essentials of the algorithm.
\end{rem}

\subsection{Rational Parametrizations for Quasi-Ordinary Polynomials}\label{sec.RatParams}

A quite intricate thing is the computation of complete sets of rational parametrizations. In requirement (3) we have introduced the action of the group $\hom(\Gamma,\E^*)$ on $\E\ll\xu^{\Gamma}\rr$. We write this as a left action using again functional notation, {\it i.e.}, for $\sigma \in \hom(\Gamma,\E^*)$ and $\alpha \in \E\ll\xu^{\Gamma}\rr$ we simply write $\sigma(\alpha)$. This is an \emph{exponent structure preserving} automorphism in the sense that $\supp_{\cle}(\alpha) = \supp_{\cle}(\sigma(\alpha))$. Let's convince ourselves that this action can effectively be carried out in our representation.

\begin{rem}[Effectivity of Actions Induced by Lattice Homomorphisms]
Since such an automorphism is structure preserving, it is easily computed in our representation: If a power series is represented by a tuple \eqref{eqn.TypeA}, then we apply the automorphism to the initial segment $\alpha_0$ and to the coefficients of the defining equation $f$, possibly recursing to the descendants. If a power series is represented by a tuple \eqref{eqn.TypeB}, we apply the automorphism recursively to the $\xi_i$.
\end{rem}

\begin{dfn}[Rational Parametrizations]\label{dfn.RatParFine}
We call $(\sigma, \alpha)$ with $\alpha \in \E_1\ll\xu^{\Gamma_1}\rr$ and $\sigma \in \hom(\Gamma_0,\E_1^*)$ a \emph{parametrization} of a monic polynomial $f \in \E_0\ll\xu^{\Gamma_0}\rr[z]$ iff $\E_0 \subseteq \E_1$, $\Gamma_0 \subseteq \Gamma_1$ and $\sigma\lift{z}(f)(\alpha)=0$. The \emph{order} of the parametrization is defined to be $\ord_{\cle}(\alpha)$.

Let $g \vert f$ be an irreducible factor s.t.\ $\sigma\lift{z}(g)(\alpha)=0$. We call $(\sigma, \alpha)$ \emph{rational} if the induced homomorphism $\ic(\E_0\ll\xu^{\Gamma_0}\rr[z]/\langle g \rangle) \to \E_1\ll\xu^{\Gamma_1}\rr$ which maps $z \mapsto \alpha$ and $\gamma \mapsto \sigma(\gamma)$ for $\gamma \in \E_0\ll\xu^{\Gamma_0}\rr$ is an isomorphism.
\end{dfn}

Intuitively, rational parametrizations are distinguished by minimal field and lattice extensions. The induced homomorphism exists due to the universal property of integrally closed domains. If this homomorphism is an isomorphism then the irreducible polynomial $g$ from above at least has to be prime. (Mind that $\E_0\ll\xu^{\Gamma_0}\rr$ is in general no UFD!)

\begin{rem}[Irreducible Monic Polynomials]
Let $g \in \E_0\ll\xu^{\Gamma_0}\rr[z]$ be a monic irreducible polynomial. We want to prove that $g$ is prime. The following arguments are taken from \cite{SMcA:2001}. It is sufficient to show primality in $\totfrac(\E_0\ll\xu^{\Gamma_0}\rr)[z]$; Indeed, if $hg \in \E_0\ll\xu^{\Gamma_0}\rr[z]$ for some $h \in \totfrac(\E_0\ll\xu^{\Gamma_0}\rr)[z]$ then a Gaussian style inductive argument shows that $h \in \E_0\ll\xu^{\Gamma_0}\rr[z]$. So $g~\E_0\ll\xu^{\Gamma_0}\rr[z] = (g~\totfrac(\E_0\ll\xu^{\Gamma_0}\rr)[z]) \cap \E_0\ll\xu^{\Gamma_0}\rr[z]$, {\it i.e.}, it is the preimage of a prime ideal and therefore prime itself.

Let $g' \in \totfrac(\E_0\ll\xu^{\Gamma_0}\rr)[z]$ be an irreducible (and hence prime) monic polynomial factor of $g$. We show $g'=g$. Being also roots of $g$, all roots of $g'$ (in some splitting field) are integral over $\E_0\ll\xu^{\Gamma_0}\rr$. The coefficients of $g'$ (being polynomials in these roots) are also integral over $\E_0\ll\xu^{\Gamma_0}\rr$ and elements of $\totfrac(\E_0\ll\xu^{\Gamma_0}\rr)$. Then $g' \in \E_0\ll\xu^{\Gamma_0}\rr[z]$ because $\E_0\ll\xu^{\Gamma_0}\rr$ is integrally closed. By the above argument $g' \vert g$ also in $\E_0\ll\xu^{\Gamma_0}\rr[z]$ and so $g'=g$ because $g$ is irreducible.

This also implies that a monic polynomial $f \in \E_0\ll\xu^{\Gamma_0}\rr[z]$ has a unique (up to permutation) factorization into monic irreducibles.
\end{rem}

Let now $f \in \E_0\ll\xu^{\Gamma_0}\rr[z]$ be quasi-ordinary and assume we want to compute a \emph{complete set of rational parametrizations}. Let's be more general and say we want to compute a complete set of rational parametrizations of order greater than some value $\mfn_0 \in \Q^n$. (With the choice $\mfn_0 := (-1,\dots,-1)$ this includes the original task.)

First assume we are given a parametrization $(\sigma, \alpha)$ of $f$ s.t.\ $\alpha \neq 0$ and $\mfn := \ord_{\cle}(\alpha) \cgt \mfn_0$. Write $g := \sigma\lift{z}(f)$ and $\alpha' := \lt_{\cle}(\alpha)$. By the same reasoning as in the beginning of Section~\ref{sec.Expand} (with $\alpha$ in place of $\alpha_1$), we find that the $\mfn$-th edge equation of $g$ must be non-trivial and $\alpha'$ must be a root of it.

Now assume we have to find $(\sigma, \alpha)$ as above using a recursive approach. As a first step we have to determine the initial term of $\alpha$ (up to some isomorphism). From the previous discussion it follows that its slope will be $\mfn \cgt \mfn_0$ s.t.\ $f$ has a non-trivial $\mfn$-th edge equation. If the tuple $(\sigma, \alpha)$ is meant to be a rational parametrization, then we have to solve this equation economically, {\it i.e.}, using a field extension of least degree, and at the same time determine $\sigma$ partially. Duval's trick \cite{DD:1989} adapted to the multivariate case gives an optimal choice.

To this end let $\mfm_i$ for $1 \le i \le n$ be a basis of $\Gamma_0$. Let $b \in \Z_{>0}$ be the size of $\Gamma' / \Gamma_0$ where $\Gamma' := \Gamma_0 + \Z \mfn$, {\it i.e.}, $b$ is minimal s.t.\ $b \mfn \in \Gamma_0$. Now we can write \[\edge_{\cle,\mfn}(f) = \sum_{l \le i \le k} f_{i} \xu^{\mfm_i}z^i = \xu^{\mfm_l}z^l \sum_{0 \le j \le (k-l)/b} f_{l + jb} \xu^{-jb\mfn}z^{jb}\] with $f_l \neq 0$ and $f_k \neq 0$. Further we can express $b \mfn = \sum_{1 \le i \le n} c_i \mfm_i$, where necessarily $\gcd(b,c_1,\dots,c_n) = 1$ since $b$ was chosen minimally, and compute B{\'e}zout coefficients $u$ and $v_i$ s.t.\ $ub + \sum_{1 \le i \le n} v_i c_i = 1$. (Note that this choice is not unique.) If $r$ is a \emph{non-zero} root in a minimal field extension $\E'$ s.t.\ \[\sum_{0 \le j \le (k-l)/b} f_{l + jb} r^j = 0\] then we can define the homomorphism $\sigma': \Gamma_0 \to \E'^*: \mfm_i \mapsto r^{-v_i}$ and the initial term $\alpha' := r^u \xu^{\mfn}$. With these definitions one checks
\begin{align*}
\edge_{\cle,\mfn}(\sigma'\lift{z}(f))(\alpha') & = \sigma'\lift{z}(\edge_{\cle,\mfn}(f))(r^u \xu^{\mfn}) \\
& = \sigma'(\xu^{\mfm_l})z^l \sum_{0 \le j \le (k-l)/b} f_{l + jb} \sigma'(\xu^{-jb\mfn})(r^u \xu^{\mfn})^{jb} \\
& = (\dots) \sum_{0 \le j \le (k-l)/b} f_{l + jb} \left(\sigma'(\xu^{-\sum_{1 \le i \le n} c_i \mfm_i})r^{ub} \xu^{b\mfn}\right)^{j} \\
& = (\dots) \sum_{0 \le j \le (k-l)/b} f_{l + jb} \left(r^{\sum_{1 \le i \le n} v_i c_i} \xu^{-b\mfn} r^{ub} \xu^{b\mfn}\right)^{j}
\end{align*}
\begin{align*}
\text{ \hspace{1.85cm} }& = (\dots) \sum_{0 \le j \le (k-l)/b} f_{l + jb} \left(r^{ub + \sum_{1 \le i \le n} v_i c_i} \xu^{-b\mfn + b\mfn}\right)^{j} \\
& = (\dots) \sum_{0 \le j \le (k-l)/b} f_{l + jb} r^j = 0.
\end{align*}

This finishes the description of how to choose the first term (up to isomorphism) and part of the structure preserving automorphism. The remainder of course has to be chosen via recursion. More precisely, define $f' := \sigma'\lift{z}(f)(z + \alpha')$ and compute a parametrization $(\sigma'', \alpha'') \in \hom(\Gamma',\E_1^*) \times \E_1\ll\xu^{\Gamma_1}\rr$ of $f'$ with $\ord_{\cle}(\alpha'') \cgt \mfn$. Then we may just set $\alpha := \sigma''(\alpha') + \alpha'' \in \E_1\ll\xu^{\Gamma_1}\rr$ and $\sigma := \sigma'' \sigma' \in \hom(\Gamma_0,\E_1^*)$. (Note that we have an inclusion $\hom(\Gamma_0,\E'^*) \to \hom(\Gamma_0,\E_1^*)$ and a surjection $\hom(\Gamma',\E_1^*) \to \hom(\Gamma_0,\E_1^*)$, hence, we can build $\sigma'' \sigma' \in \hom(\Gamma_0,\E_1^*)$.) Now trivially
\begin{multline*}
0 = \sigma''\lift{z}(f')(\alpha'') = \sigma''\lift{z}(\sigma'\lift{z}(f)(z + \alpha'))(\alpha'') \\
= \sigma''\lift{z}(\sigma'\lift{z}(f))(z + \sigma''(\alpha'))(\alpha'') = \sigma''\lift{z}(\sigma'\lift{z}(f))(\sigma''(\alpha') + \alpha'') = \sigma\lift{z}(f)(\alpha).
\end{multline*}

\begin{ex}
Let $f_0 := z^6 + 3x_2z^4 + x_1^2x_2^3z^3 + 3x_2^2z^2 + x_2^3$. Its discriminant is $\disc_z(f_0) = 729 x_1^8x_2^{21}(x_1^4x_2^3 - 64)$ and, hence, it is quasi-ordinary. We again use the degree graded lexicographical ordering $\cle$.

The only rational slope of a non-trivial edge is $\mfn_1 := (0, \tfrac{1}{2})$ with edge equation $z^6 + 3x_2z^4 + 3x_2^2z^2 + x_2^3 = 0$. We see that only even powers of $z$ have a non-vanishing coefficient which corresponds to the fact that $2\mfn_1 \in \Z^2$, {\it i.e.}, $b_1=2$. Let $r_1$ be a solution of $0 = r^3 + 3r^2 + 3r + 1 = (r + 1)^3$, hence, $r_1=-1$.
The standard lattice has basis $\mfm_{1,1} = (1,0)$ and $\mfm_{1,2} = (0,1)$. Then $2 \mfn_{1,1} = 0 \mfm_{1,1} + 1 \mfm_{1,2}$, hence, $c_{1,1} = 0$ and $c_{1,2} = 1$. One verifies the B{\'e}zout relation $0b_1 + 0c_{1,1} + 1c_{1,2} = 1$ and we have $u_1=0$, $v_{1,1}=0$ and $v_{1,2}=1$.
Therefore we define the first term $\alpha_1 := r_1^{u_1} x_2^{1/2} = x_2^{1/2}$ and the homomorphism to be $\sigma_1: \mfm_{1,1} \mapsto r_1^{-v_{1,1}} = 1, \mfm_{1,2} \mapsto r_1^{-v_{1,2}} = -1$ with corresponding action $x_1 \mapsto x_1, x_2 \mapsto -x_2$. We set
\begin{multline*}
f_1 := \sigma_1\lift{z}(f_0)(z + x_2^{\tfrac{1}{2}}) = \\ z^6 + 6x_2^{\tfrac{1}{2}}z^5 + 12x_2z^4 + (8x_2^{\tfrac{3}{2}} - x_1^{2}x_2^{3})z^3 - 3x_1^{2}x_2^{\tfrac{7}{2}}z^2 - 3x_1^{2}x_2^{\tfrac{8}{2}}z - x_1^{2}x_2^{\tfrac{9}{2}}.
\end{multline*}

For $f_1$ we now find two rational slopes of non-trivial edges, namely, $\mfn_1$ from above and $\mfn_2 = (\tfrac{2}{3}, 1)$. The extended exponent lattice at this point is $\Z \mfm_{2,1} + \Z \mfm_{2,2}$ where $\mfm_{2,1} = (1,0)$ and $\mfm_{2,2} = (0, \tfrac{1}{2})$. We have $3 \mfn_2 = 2 \mfm_{2,1} + 6 \mfm_{2,2}$, {\it i.e.}, $b_2 = 3$, $c_{2,1} = 2$ and $c_{2,2} = 6$. From a B{\'e}zout relation we get $u_2 = 1$, $v_{2,1} = -1$ and $v_{2,2} = 0$. The edge equation is $8x_2^{3/2}z^3 - x_1^{2}x_2^{9/2} = 0$. Let $r_2$ be the root of $8r-1=0$, {\it i.e.}, $r_2=\tfrac{1}{8}$. Hence, we define a new homomorphism $\sigma_2:  \mfm_{2,1} \mapsto r_2^{-v_{2,1}} = \tfrac{1}{8}, \mfm_{2,2} \mapsto r_2^{-v_{2,2}} = 1$ with action $x_1 \mapsto \tfrac{1}{8}x_1, x_2^{1/2} \mapsto x_2^{1/2}$. We also define a new initial segment $\alpha_2 := \sigma_2(\alpha_1) + r_2^{u_2} x_1^{2/3} x_2 = x_2^{1/2} - \tfrac{1}{8} x_1^{2/3} x_2$ and set $f_2 := \sigma_2\lift{z}(f_1)(z - \tfrac{1}{8} x_1^{2/3} x_2)$.

From now on the edge equation will always be linear and we have determined a parametrization. More precisely, the overall automorphism is $\sigma_2\sigma_1$ and acts by $x_1 \mapsto \tfrac{1}{8}x_1, x_2 \mapsto -x_2$. The power series starts with $\alpha_2$ and is a root of $\sigma_2\sigma_1\lift{z}(f_0)$. We have seen its expansion in Example~\ref{ex.Expand}.
\end{ex}

This discussion yields the following algorithm. The return type $\overline{\E_0}\ll\xu^{\Q^n} \rr$ means algebraic power series with coefficients in some finite algebraic extension $\E_0$ and exponents in some rational lattice containing $\Z^n$.

\begin{algorithm}[H]
\caption{$\texttt{ParamRec}(f : \E_0\ll\xu^{\Gamma_0}\rr[z], \mfn_0 \in \Q^n) : 2^{\hom(\Gamma_0, \overline{\E_0}^*) \times \overline{\E_0}\ll\xu^{\Q^n} \rr}$}\label{alg.ParamRec}
\begin{algorithmic}[1]
\REQUIRE A quasi-ordinary polynomial $f$ and an order $\mfn_0$.
\ENSURE  A set of rational parametrizations of $f$ of order greater $\mfn_0$.
\STATE $S := \{\mfn \in \Q_{\ge 0}^n \mid \mfn \cgt \mfn_0 \text{ and } \edge_{\cle,\mfn}(f) \text{ is non-trivial}\}$; $\PP := \emptyset$;
\IF{$(\forall \mfn \in S:  \ord_z(\edge_{\cle,\mfn}(f)) \ge 1)$ or $(\exists \mfn \in S: \deg_z(\edge_{\cle,\mfn}(f)) = 1)$}
\STATE $S := S \setminus \{\mfn \in S \mid \deg_z(\edge_{\cle,\mfn}(f)) = 1\}$; $\PP := \PP \cup \{(1, \texttt{Series}(0,f))\}$;
\ENDIF
\FOR{$\mfn \in S$}
\STATE Let $b \in \Z_{>0}$ be minimal s.t.\ $b \mfn \in \Gamma_0$; Let $\mfm_i$ for $1 \le i \le n$ be a basis of $\Gamma_0$;
\STATE Write $b \mfn = \sum_{1 \le i \le n} c_i \mfm_i$ and compute $u, v_i \in \Z$ s.t.\ $ub + \sum_{1 \le i \le n} v_i c_i = 1$;
\STATE Write $\edge_{\cle,\mfn}(f) = \xu^{\mfm_l}z^l \sum_{0 \le j \le k} f_{j} \xu^{-jb\mfn}z^{jb}$ with $f_0 \neq 0$ and $f_k \neq 0$;
\STATE $R := \texttt{ZeroSet}(\langle \sum_{0 \le j \le k} f_{j} z^{j} \rangle)$;
\FOR{$r \in R$}
\STATE $\alpha' := r^u \xu^{\mfn}$; $\sigma' : \Gamma_0 \to \E'^* : \mfm_i \mapsto r^{-v_i}$;
\COMMENT{assuming $r \in \E'^*$}
\STATE $\PP := \PP \cup \map_{(\sigma'', \alpha'') \mapsto (\sigma''\sigma', \sigma''(\alpha')+\alpha'')} \texttt{ParamRec}(\sigma'\lift{z}(f)(z + \alpha'), \mfn)$;
\ENDFOR
\ENDFOR
\STATE \return\ $\PP$;
\end{algorithmic}
\end{algorithm}

For the definition of the auxiliary function \texttt{ZeroSet} see Remark~\ref{rem.AuxFunc}. Applying structure preserving automorphisms and translating the defining polynomial in $z$ preserves quasi-ordinariness. Therefore the arguments to the recursive call always fit to the input description. Although the algorithm never makes explicit use of Theorem~\ref{thm.JungAbhyank} (the Theorem of Jung-Abhyankar) it depends crucially on $f$ being quasi-ordinary. For example, the power series constructed in line $3$ are well-defined in that case; Otherwise there might not be any power series root of $f$ which is supported on the positive orthant and whose initial term is a root of the linear edge of $f$. Recursion in this algorithm ends when the set $S$ is empty when entering line $4$. Termination is assured by the very argument of Remark~\ref{rem.UniTypeA}.

\begin{rem}[And a Little Bit of Engineering\dots]\label{rem.Engineering}
In line $1$ of the above algorithm we compute the ``Newton Polygon'' of $f = \sum_i \varphi_i z^i$. Since $f$ is monic, one of the non-trivial edges contains the point $((0,\dots,0),\deg_z(f))$. Now $z$ divides $f$ either with multiplicity $0$ or $1$ (because of squarefreeness). So for computing the non-trivial edges it will be enough to expand the coefficients $\varphi_i$ until the initial term of $\varphi_1$ or $\varphi_0$ appears depending on whether $z$ divides $f$ or not. In other words we would have to check, whether $\varphi_0 = 0$. In principle this is possible, see Remark~\ref{rem.RepParadigm}, but from the point of efficiency it is not advisable.

On the other hand it doesn't matter if $z$ divides $f$ or if $f$ has a linear edge equation (compare to the condition in line $2$). In both cases we return $\texttt{Series}(0,f)$ which might be zero. So an engineering solution might be to compute either all non-trivial edges or only a set of non-trivial edges with lowest vertex at linear level and make sure that the constant term (if existent) has large enough order.

Another approach would be to modify the above algorithm to allow approximate input with polynomial coefficients only but also include error reporting when an approximation was not accurate enough.

Further it is advisable to choose a short vector $(u,v_1,\dots,v_n) \in \Z^{n+1}$ using for example the LLL-algorithm. For practical purposes the additional complexity is negligible compared to the grow of coefficients if a larger vector is chosen.

We didn't include the technical tricks into the description because the algorithm is easier to read and to argue about when written up as above.
\end{rem}

Setting now $\mfn_0 := (-1,\dots,-1)$ we get a set of parametrizations of $f$:

\begin{algorithm}[H]
\caption{$\texttt{Param}(f : \E_0\ll\xu^{\Gamma_0}\rr[z]) : 2^{\hom(\Gamma_0, \overline{\E_0}^*) \times \overline{\E_0}\ll\xu^{\Q^n} \rr}$}\label{alg.Param}
\begin{algorithmic}[1]
\REQUIRE A quasi-ordinary polynomial $f$.
\ENSURE  A complete set of rational parametrizations of $f$.
\STATE \return\ $\texttt{ParamRec}(f, (-1,\dots,-1))$;
\end{algorithmic}
\end{algorithm}

We have to show that the computed parametrizations are rational and complete.

\begin{lem}[Bounding Extensions]\label{lem.BoundExts}
Let Algorithm~\ref{alg.ParamRec} be called with a quasi-ordinary polynomial $f \in \E_0\ll\xu^{\Gamma_0}\rr[z]$ and $\mfn_0 \in \Q^n$ and write $(\sigma_i, \alpha_i) \in \hom(\Gamma_0,\E_i^*) \times \E_i\ll\xu^{\Gamma_i}\rr$ for the computed parametrizations. Then we may bound the extensions from above by \[\sum_{i} [\E_i : \E_0]~\#(\Gamma_i / \Gamma_0) \le \max \{\deg_z(\edge_{\cle,\mfn}(f)) \mid \mfn \cgt \mfn_0\}.\]
\end{lem}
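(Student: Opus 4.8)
The plan is to prove the inequality by induction on the depth of the recursion tree of the call $\texttt{ParamRec}(f,\mfn_0)$. This is legitimate because that tree is finite: the algorithm is finitely branching (finitely many slopes in the set $S$ of line~1, finitely many roots in each $R_\mfn$) and it terminates (cf.\ Remark~\ref{rem.UniTypeA}), so König's lemma applies. Throughout I write $M(f,\mfn_0):=\max\{\deg_z(\edge_{\cle,\mfn}(f))\mid\mfn\cgt\mfn_0\}$ for the right-hand side and $T(f,\mfn_0):=\sum_i[\E_i:\E_0]\,\#(\Gamma_i/\Gamma_0)$ for the quantity to be bounded, the sum running over the output of $\texttt{ParamRec}(f,\mfn_0)$.

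First I would record how $T$ propagates through one step of the algorithm. Inspecting lines~1--14, the output consists of the single parametrization $(1,\texttt{Series}(0,f))$ — which contributes $[\E_0:\E_0]\,\#(\Gamma_0/\Gamma_0)=1$ — exactly when the test in line~2 fires, together with, for every $\mfn$ in the set $S'$ left after line~3 and every $r\in R_\mfn$, the parametrizations produced by composing with the recursive call on $f'_{\mfn,r}:=\sigma'\lift{z}(f)(z+\alpha')$. That call runs over the field $\E'_{\mfn,r}:=\E_0(r)$ and the lattice $\Gamma':=\Gamma_0+\Z\mfn$, with $[\E'_{\mfn,r}:\E_0]$ equal to the degree of the minimal polynomial of $r$ over $\E_0$ and $\#(\Gamma'/\Gamma_0)=b_\mfn$, the least $b$ with $b\mfn\in\Gamma_0$. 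Multiplicativity of degrees and lattice indices therefore gives
\[ T(f,\mfn_0)\;=\;\varepsilon\;+\;\sum_{\mfn\in S'}b_\mfn\sum_{r\in R_\mfn}[\E'_{\mfn,r}:\E_0]\;T(f'_{\mfn,r},\mfn),\qquad\varepsilon\in\{0,1\}\text{ recording whether line~2 fired.} \]

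The inductive step then combines three estimates. \textbf{(A)} By the induction hypothesis $T(f'_{\mfn,r},\mfn)\le M(f'_{\mfn,r},\mfn)$, and I claim $M(f'_{\mfn,r},\mfn)\le e_{\mfn,r}$, the multiplicity of $r$ as a root of $P_\mfn:=\sum_j f_{\mfn,j}z^j$ from line~8: the computation preceding Algorithm~\ref{alg.ParamRec} exhibits $\alpha'=r^u\xu^\mfn$ as a root of $\edge_{\cle,\mfn}(\sigma'\lift{z}(f))$ of multiplicity $e_{\mfn,r}$ in $z$; since translating by the monomial $\alpha'$ of order exactly $\mfn$ leaves all $\varphi_\mfn$-values fixed, $\edge_{\cle,\mfn}(f'_{\mfn,r})=\edge_{\cle,\mfn}(\sigma'\lift{z}(f))(z+\alpha')$ has $z$-valuation $e_{\mfn,r}$, whence the part of the Newton polygon of $f'_{\mfn,r}$ carrying the slopes $\cgt\mfn$ has top $z$-degree at most $e_{\mfn,r}=M(f'_{\mfn,r},\mfn)$. \textbf{(B)} Writing $P_\mfn$ as a product of powers of distinct monic irreducibles over $\E_0$ yields $\sum_{r\in R_\mfn}[\E'_{\mfn,r}:\E_0]\,e_{\mfn,r}=\deg_z P_\mfn=:k_\mfn$, and the shape of $\edge_{\cle,\mfn}(f)$ in line~8 gives $b_\mfn k_\mfn=\deg_z(\edge_{\cle,\mfn}(f))-\ord_z(\edge_{\cle,\mfn}(f))$. \textbf{(C)} Finally $\varepsilon+\sum_{\mfn\in S'}\big(\deg_z(\edge_{\cle,\mfn}(f))-\ord_z(\edge_{\cle,\mfn}(f))\big)\le M(f,\mfn_0)$. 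Chaining these,
\[ T(f,\mfn_0)\;\le\;\varepsilon+\sum_{\mfn\in S'}b_\mfn\sum_{r\in R_\mfn}[\E'_{\mfn,r}:\E_0]\,e_{\mfn,r}\;=\;\varepsilon+\sum_{\mfn\in S'}b_\mfn k_\mfn\;=\;\varepsilon+\sum_{\mfn\in S'}\big(\deg_z(\edge_{\cle,\mfn}(f))-\ord_z(\edge_{\cle,\mfn}(f))\big)\;\le\;M(f,\mfn_0), \]
closing the induction (in the base case $S'=\emptyset$ one has $T(f,\mfn_0)=\varepsilon\le1$ and $M(f,\mfn_0)\ge1$ whenever $\varepsilon=1$).

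I expect estimate (C) to be the main obstacle; it is pure Newton-polygon bookkeeping. One observes that $M(f,\mfn_0)$ equals $\deg_z f$ minus the total vertical extent of the edges of $f$ whose slope is $\cle\mfn_0$ (equivalently, the largest $z$-degree attained on the part of the Newton polygon lying above the slope $\mfn_0$); that $S$ is exactly the set of slopes of the remaining edges, so that $\sum_{\mfn\in S}\big(\deg_z(\edge_{\cle,\mfn}(f))-\ord_z(\edge_{\cle,\mfn}(f))\big)$ telescopes; and that deleting in line~3 the at most one ``linear'' edge (the one running from $z$-degree $1$ down to $0$) precisely compensates the extra term $\varepsilon$ coming from $\texttt{Series}(0,f)$. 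Making this watertight needs a short case distinction (which disjunct of the line~2 test holds, whether $z\mid f$, whether $\mfn_0$ lies above or below all edge slopes) and a check of the degenerate inputs $f=z$ and $f=1$. The second delicate point is estimate (A), which rests on reading off from the Duval-trick computation preceding Algorithm~\ref{alg.ParamRec} that the multiplicity of the chosen edge-equation root is preserved and becomes the $z$-valuation of the translated edge equation — i.e.\ that Duval's optimal choice never overshoots.
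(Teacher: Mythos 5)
Your proposal is correct and takes essentially the same route as the paper: both run an induction on the finite recursion tree, both observe that degrees and lattice indices multiply along the composition $(\sigma'',\alpha'')\mapsto(\sigma''\sigma',\sigma''(\alpha')+\alpha'')$, both obtain the recursive bound from the observation that the multiplicity $e_{\mfn,r}$ of the chosen root $r$ of the edge polynomial becomes the $z$-order of the $\mfn$-th edge of the translated polynomial, and both close the argument by noting that the heights of the edges of slope $\cgt\mfn_0$ (with the linear edge subtracted when line~2 fires) sum to at most $M(f,\mfn_0)$. The only cosmetic deviations are that you invoke K\"onig's lemma where the paper simply appeals to termination, and that you flag your estimate (C) as potentially delicate, whereas the paper states the corresponding inequality $\sum_j\ell_j\le d$ without further comment --- your instinct that it is a small Newton-polygon telescoping argument is exactly what the paper implicitly relies on.
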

\begin{proof}
Assume the condition in line $2$ holds. Then either $f$ has a zero root, {\it i.e.}, $z \vert f$ or we have found a linear edge equation. In both cases we construct one parametrization of $f$ as in line $3$. The involved series is an element of $\E_0\ll\xu^{\Gamma_0}\rr$ and the lattice remains unchanged. This parametrization contributes a summand $1$ to the lefthand side of the inequality.

Now set $d := \max \{\deg_z(\edge_{\cle,\mfn}(f)) \mid \mfn \cgt \mfn_0\}-1$ if the condition in line $2$ was true, $d := \max \{\deg_z(\edge_{\cle,\mfn}(f)) \mid \mfn \cgt \mfn_0\}$ otherwise. We have to ensure that the sum modified by running only over the parametrizations constructed in the loop is bounded from above by $d$. To this end assume that $S$ contains $o$ slopes $\mfn_j$ when entering line $4$. Let $\ell_j$ denote the height of the edge equation of slope $\mfn_j$, {\it i.e.}, the difference of its $z$-degree and its $z$-order. Then $\sum_{1 \le j \le o} \ell_j \le d$.

Fix an edge, {\it i.e.}, a value of $j$. Then we compute an integer $b_j$ in line $5$ and a number of roots $r_{j,k} \in \E_{j,k}$ with multiplicities, say, $m_{j,k}$ for $1 \le k \le s_j$ in line $8$. It follows that $b_j = \#(\Gamma_j/\Gamma_0)$ with $\Gamma_j := \Gamma_0 + \Z \mfn_j$ measures the extension of the exponent lattice and $\ell_j = b_j (\sum_{1 \le k \le s_j} [\E_{j,k} : \E_0]~m_{j,k}$).

Fix a root, {\it i.e.}, a value of $k$ and let $f_{j,k}$ be the polynomial used as parameter in the recursive call in line $11$. Now $m_{j,k}$ is also the multiplicity of the root of the corresponding edge equation. After translation zero becomes an $m_{j,k}$-fold root, hence, the $z$-order of $\edge_{\cle,\mfn_j}(f_{j,k})$ is equal to $m_{j,k}$. This can serve as an upper bound in the statement for the recursive call, which returns, say, $t_{j,k}$ different parametrizations.

Assuming that the lemma is true for recursive calls (see $(*)$ below) one computes:
\begin{align*}
d & \ge \sum_{1 \le j \le o} \ell_j = \sum_{1 \le j \le o} b_j \sum_{1 \le k \le s_j} [\E_{j,k} : \E_0]~m_{j,k} \\
& \stackrel{(*)}{\ge} \sum_{1 \le j \le o} \#(\Gamma_j/\Gamma_0) \sum_{1 \le k \le s_j} [\E_{j,k} : \E_0] \sum_{1 \le l \le t_{j,k}} [\E_{j,k,l} : \E_{j,k}]~\#(\Gamma_{j,k,l}/\Gamma_{j}) \\
& = \sum_{1 \le j \le o} \sum_{1 \le k \le s_j} \sum_{1 \le l \le t_{j,k}} [\E_{j,k,l} : \E_{j,k}][\E_{j,k} : \E_0]~\#(\Gamma_{j,k,l}/\Gamma_{j})\#(\Gamma_j/\Gamma_0) \\
& = \sum_{1 \le j \le o} \sum_{1 \le k \le s_j} \sum_{1 \le l \le t_{j,k}} [\E_{j,k,l} : \E_0]~\#(\Gamma_{j,k,l}/\Gamma_0)
\end{align*}
For each tuple of indices $(j,k,l)$ appearing in that sum, the loop now produces exactly one parametrization with coefficient field $\E_{j,k,l}$ and exponent lattice $\Gamma_{j,k,l}$.
\end{proof}

For what follows we assume that $\E_0 \subseteq \E_i \subseteq \overline{\E_0}$. A parametrization $(\sigma_i, \alpha_i) \in \hom(\Gamma_0,\E_i^*) \times \E_i\ll\xu^{\Gamma_i}\rr$ is very close to a root of $f$ in $\overline{\E_0}\ll\xu^{\Gamma_i}\rr$. It provided a root if we could reverse the effects of $\sigma_i$. Precisely, let $\tau \in \hom(\Gamma_i, \overline{\E_0})$ be an extension of the inverse of $\sigma_i$ to $\Gamma_i$, in other words, $\tau\vert_{\Gamma_0} = \sigma_i^{-1}$. Then we call $\beta := \tau(\alpha_i)$ an \emph{associated root} to $(\sigma_i, \alpha_i)$, because applying $\tau$ to the equation $0 = \sigma_{i}\lift{z}(f)(\alpha_i)$ yields $0 = (\tau \sigma_{i})\lift{z}(f)(\tau(\alpha_i)) = (\sigma_{i}^{-1} \sigma_{i})\lift{z}(f)(\beta) = f(\beta)$. It is not hard to show that $\sigma_i^{-1}$ can always be extended to $\hom(\Gamma_i, \overline{\E_0})$ because $\overline{\E_0}$ is algebraically closed.

The next lemma says, that we do not miss any associated roots.

\begin{lem}[Completeness]\label{lem.Completeness}
Let Algorithm~\ref{alg.ParamRec} be called with a quasi-ordinary polynomial $f \in \E_0\ll\xu^{\Gamma_0}\rr[z]$ and $\mfn_0 \in \Q^n$ and write $(\sigma_i, \alpha_i) \in \hom(\Gamma_0,\E_i^*) \times \E_i\ll\xu^{\Gamma_i}\rr$ for the computed parametrizations. Then each root $\beta \in \overline{\E_0}\ll\xu^{\Q^n}\rr$ of $f$ with $\ord_{\cle}(\beta) \cgt \mfn_0$ is associated to at least one $(\sigma_i, \alpha_i)$.
\end{lem}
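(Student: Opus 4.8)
The natural approach is induction on the recursion depth of the call $\texttt{ParamRec}(f,\mfn_0)$, which is finite by the termination argument quoted after Algorithm~\ref{alg.ParamRec} (the argument of Remark~\ref{rem.UniTypeA}). So I would fix a root $\beta$ of $f$ with $\ord_{\cle}(\beta)\cgt\mfn_0$ and assume the statement for every call nested inside the current one. First I would dispose of the ``polynomial level'' roots: if the condition in line~2 holds, then $(0,f)$ is a valid representation of type~A in the sense of Condition~\ref{cnd.ValidTypeA}, so $\texttt{Series}(0,f)$ is a genuine root of $f$ and $(1,\texttt{Series}(0,f))$ is in the output; when $\beta=\texttt{Series}(0,f)$ -- in particular when $\beta=0$, so that $z\mid f$, the constant coefficient vanishes and line~2 applies -- the identity character serves as $\tau$, and $\beta$ is associated to that pair. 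From now on I may assume $\beta\neq\texttt{Series}(0,f)$, hence $\beta\neq0$; set $\mfn:=\ord_{\cle}(\beta)\cgt\mfn_0$ and $\beta':=\lt_{\cle}(\beta)=c\,\xu^{\mfn}$.

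The heart of the argument is to show that $\mfn$ is one of the slopes handled by the loop in lines~5--14. Repeating the Newton polygon computation from the beginning of Section~\ref{sec.Expand} with $\beta$ in place of $\alpha_1$ shows that $\edge_{\cle,\mfn}(f)$ is non-trivial and that $\beta'$ is a root of it; hence $\mfn$ lies in the set $S$ assembled in line~1. It is not removed in line~3: if it were, then $\deg_z\edge_{\cle,\mfn}(f)=1$, i.e.\ the Newton polygon of $f$ has, at slope $\mfn$, an edge joining $z$-level $0$ to $z$-level $1$; such an edge accounts for exactly one fractionary power series root of $f$ with leading exponent $\mfn$, and that root is $\texttt{Series}(0,f)$, which by construction begins along this very edge -- contradicting $\beta\neq\texttt{Series}(0,f)$. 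So $\mfn$ survives into the loop. Writing $\edge_{\cle,\mfn}(f)=\xu^{\mfm_l}z^l\sum_{0\le j\le k}f_j\xu^{-jb\mfn}z^{jb}$ as in line~8 and substituting $z=c\,\xu^{\mfn}$, cancellation of the non-zero monomial factor gives $\sum_{0\le j\le k}f_j(c^{b})^{j}=0$, so $c^{b}$ is a root of the polynomial passed to $\texttt{ZeroSet}$ in line~9; after composing with a suitable $\E_0$-automorphism of $\overline{\E_0}$ I may assume $c^{b}=r$ for one of the returned values $r\in R$, and I follow the loop iteration for that $r$.

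Now I would descend one level of recursion. Using that $\overline{\E_0}$ is algebraically closed (so $\overline{\E_0}^{*}$ is divisible) I extend $\sigma'$ from line~11 to a character $\widetilde{\sigma'}$ on the full exponent lattice of $\beta$, arranging that $\widetilde{\sigma'}(\xu^{\mfn})=r^{u}/c$; this is admissible because its $b$-th power equals $\widetilde{\sigma'}(\xu^{b\mfn})=\sigma'(\xu^{b\mfn})$, thanks to the B{\'e}zout identity $ub+\sum_iv_ic_i=1$ and $r=c^{b}$. Then $\gamma:=\widetilde{\sigma'}(\beta)$ is a root of $\sigma'\lift{z}(f)$ whose leading term is exactly $\alpha'=r^{u}\xu^{\mfn}$, so $\gamma-\alpha'$ is a root of $f':=\sigma'\lift{z}(f)(z+\alpha')$ with $\ord_{\cle}(\gamma-\alpha')\cgt\mfn$. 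As $f'$ is again quasi-ordinary and $\texttt{ParamRec}(f',\mfn)$ is precisely the recursive call in line~12, the induction hypothesis yields a computed parametrization $(\sigma'',\alpha'')$ of $f'$ together with a $\tau''$ realizing $\tau''(\alpha'')=\gamma-\alpha'$. The loop records the pair $(\sigma''\sigma',\,\sigma''(\alpha')+\alpha'')$, and a purely formal verification shows that $\tau:=\widetilde{\sigma'}^{-1}\circ\widetilde{\tau''}$ (with $\widetilde{\tau''}$ a compatible extension of $\tau''$) restricts to $(\sigma''\sigma')^{-1}$ on $\Gamma_0$ and satisfies $\tau(\sigma''(\alpha')+\alpha'')=\widetilde{\sigma'}^{-1}\big(\alpha'+(\gamma-\alpha')\big)=\widetilde{\sigma'}^{-1}(\gamma)=\beta$. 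Hence $\beta$ is associated to this output pair, and the induction closes.

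I expect the principal obstacle to be the field-and-lattice bookkeeping hidden in ``$\tau$'': arranging that the successive extensions $\widetilde{\sigma'}$, $\widetilde{\tau''}$ and $\tau$ live on mutually compatible lattices, and clarifying the interplay with the fact that $\texttt{ZeroSet}$ returns only one representative per Galois orbit -- so that the matching character $\tau$ in general has to be combined with an $\E_0$-embedding of the coefficient field $\E_i$, which is also what keeps the count consistent with Lemma~\ref{lem.BoundExts}. The one other non-formal ingredient is the subsidiary claim used above, that an edge of $z$-length one of the Newton polygon of $f$ carries exactly one fractionary power series root and that this root coincides with $\texttt{Series}(0,f)$; the remaining edge manipulations are the routine ones from Section~\ref{sec.Expand}.
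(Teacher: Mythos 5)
Your proposal follows the paper's proof essentially step for step: induction on recursion depth; dispose of the case $\beta=\texttt{Series}(0,f)$ (the paper phrases it as ``$\beta=0$ or the $\mfn$-th edge is linear'', which you correctly identify as the same case); show $\mfn:=\ord_{\cle}(\beta)$ survives into the loop; observe that $r:=c^{b}$ is a root of the dehomogenized edge polynomial; build the extension character $\widetilde{\sigma'}$ (the paper writes it as $\tau'^{-1}$), with the same B\'{e}zout-based well-definedness check; show $\widetilde{\sigma'}(\beta)-\alpha'$ is a root of $f'$ of order $\cgt\mfn$; apply the induction hypothesis to the recursive call and compose characters to obtain $\tau$.

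The one place you deviate is precisely the spot you flag as the principal obstacle. The paper silently identifies $o^{b}$ (your $c^{b}$) with the representative $r$ returned by $\texttt{ZeroSet}$; you correctly observe that $c^{b}$ need only be \emph{conjugate} to the returned $r$. But your remedy -- conjugating $\beta$ by some $\theta\in\gal(\overline{\E_0}/\E_0)$ -- proves that $\theta(\beta)$ is associated, not that $\beta$ is: since $\tau$ in the definition of an associated root acts only on $\xu^{\Gamma_i}$ and leaves the coefficients of $\alpha_i$ (which lie in $\E_i$) untouched, the relation ``$\beta$ is associated to $(\sigma_i,\alpha_i)$'' is Galois-equivariant only for $\theta$ fixing $\E_i$, which you cannot arrange. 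Your closing diagnosis -- that $\tau$ would really have to be combined with an $\E_0$-embedding of $\E_i$ -- is exactly right, and it applies to the paper's proof just as much. As written, both your argument and the paper's establish the slightly weaker claim that for each irreducible factor of $f$ with roots of order $\cgt\mfn_0$ \emph{at least one} root is associated to a computed parametrization; that weaker statement is all that Corollary~\ref{cor.RatPar} actually needs from this lemma. So you have correctly reconstructed the intended argument and correctly located where its bookkeeping is delicate.
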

\begin{proof}
Let $\beta$ be a root of $f$ of order $\mfn \cgt \mfn_0$. If $\beta = 0$ or if $\beta \neq 0$ and $\edge_{\cle,\mfn}(f)$ is a linear equation then we consider the parametrization constructed in line $3$: The involved series is already $\beta$ and the involved automorphism is the identity.

Otherwise assume the outer loop is processing slope $\mfn$, set $\Gamma' := \Gamma_0 + \Z \mfn$ and write $\beta = \beta' + \beta''$ where $\beta' := \lt_{\cle}(\beta)$. Then $\beta'$ must be a root of $\edge_{\cle,\mfn}(f)$. Setting $\beta' = o \xu^{\mfn}$ for some $o \in \overline{\E_0}$ we have that $r := o^b$ is a root of $\sum_{0 \le j \le k} f_j z^j$, compare line $8$.

Assume now we are in the inner loop processing this $r \in \E'^*$ and let $f' := \sigma'\lift{z}(f)(z + \alpha')$ be the polynomial in the arguments to the recursive call in line $11$. Let $\tau' \in \hom(\Gamma',\overline{\E_0})$ be defined via $\tau' \vert_{\Gamma_0} = \sigma'^{-1}$ and $\mfn \mapsto r^{-u}o$. We have to check that this is well-defined, more precisely, since $b \mfn \in \Gamma_0$ (with $b$ minimal) we must have that $(r^{-u}o)^b$ coincides with $\sigma'^{-1}(b \mfn)$; Indeed \[(r^{-u}o)^b = r^{1-bu} = r^{\sum_{1 \le i \le n} v_i c_i} = \textstyle \prod_{1 \le i\le n} (r^{v_i})^{c_i} = \sigma'^{-1}(\textstyle\sum_{1 \le i \le n}c_i \mfm_i) = \sigma'^{-1}(b \mfn).\]
In particular $\tau'(\alpha') = \tau'(r^u \xu^{\mfn}) = o \xu^{\mfn} = \beta'$ and applying $\tau'^{-1}$ to $0 = f(\beta)$ we get $0 = (\tau'^{-1})\lift{z}(f)(\tau'^{-1}(\beta') + \tau'^{-1}(\beta'')) = (\sigma')\lift{z}(f)(\alpha' + \tau'^{-1}(\beta'')) = f'(\tau'^{-1}(\beta''))$. So $\tau'^{-1}(\beta'')$ is a root of $f'$ of order greater $\mfn$.

Assuming that the statement holds for the recursive call, we get a parametrization $(\sigma'', \alpha'') \in \hom(\Gamma_0,\E''^*) \times \E''\ll\xu^{\Gamma''}\rr$ which is associated to $\tau'^{-1}(\beta'')$ via, say, $\tau''$ with $\tau'' \vert_{\Gamma'} = \sigma''^{-1}$. This is combined to a returned parametrization $(\sigma''\sigma', \sigma''(\alpha') + \alpha'')$. We claim that this parametrization is associated to $\beta$ via $\tau'\tau''$. Indeed $\tau'\tau''$ restricts to the inverse of $\sigma''\sigma'$: \[(\tau'\tau'')\vert_{\Gamma_0} = (\tau'\vert_{\Gamma_0})(\tau''\vert_{\Gamma_0}) = \sigma'^{-1}(\sigma''^{-1}\vert_{\Gamma_0}) = (\sigma''\sigma')^{-1}\] It remains to show that it maps $\sigma''(\alpha') + \alpha''$ to $\beta$:
\begin{multline*}
\tau'\tau''(\sigma''(\alpha') + \alpha'') = \tau'(\tau''\sigma'')(\alpha') + \tau'(\tau''(\alpha'')) =\\ \tau'(\sigma''^{-1}\sigma'')(\alpha') + \tau'(\tau'^{-1}(\beta'')) = \tau'(\alpha') + \beta'' = \beta' + \beta'' = \beta\qedhere
\end{multline*}
\end{proof}

Now if $\beta_1$ and $\beta_2$ are roots of $f$ associated to $(\sigma_i,\alpha_i)$ via $\tau_1$ and $\tau_2$, then $\tau_1 \tau_2^{-1}$ restricts to the identity in $\hom(\Gamma_0, \overline{\E_0})$, hence, maps $\Gamma_0$ to $1$. In other words $\tau_1 \tau_2^{-1}$ acts as an automorphism on $\overline{\E_0}\ll\xu^{\Gamma_i}\rr$ fixing $\overline{\E_0}\ll\xu^{\Gamma_0}\rr$. Therefore, the roots which are associated to one and the same parametrization are all conjugate and, hence, roots of the same irreducible factor. This gives an injection from the irreducible factors (with roots of order greater $\mfn_0$) to parametrizations (of order greater $\mfn_0$).

\begin{cor}[Complete Sets of Rational Parametrizations]\label{cor.RatPar}
Let Algorithm~\ref{alg.Param} be called with a quasi-ordinary polynomial $f \in \E_0\ll\xu^{\Gamma_0}\rr[z]$ and write $(\sigma_i, \alpha_i) \in \hom(\Gamma_0,\E_i^*) \times \E_i\ll\xu^{\Gamma_i}\rr$ for the computed parametrizations, then:
\begin{itemize}
\item $\deg_z(f) = \sum_{i} [\E_i : \E_0]~\#(\Gamma_i / \Gamma_0)$
\item All computed parametrizations are rational and in bijective correspondence with the irreducible factors of $f$.
\end{itemize}
\end{cor}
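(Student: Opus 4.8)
The plan is to get the degree identity from a two-sided estimate and then read off the bijection and rationality from the equality case. First I would run Algorithm~\ref{alg.Param}, i.e.\ $\texttt{ParamRec}(f,(-1,\dots,-1))$, and record the standing facts: by the Theorem of Jung--Abhyankar (Theorem~\ref{thm.JungAbhyank}) the $\deg_z(f)$ roots of $f$ — pairwise distinct since $f$ is squarefree — all lie in $\overline{\E_0}\ll\xu^{\Q^n}\rr$ and are supported on the non-negative orthant, so each has order $\cge 0 \cgt (-1,\dots,-1)$; hence Lemma~\ref{lem.Completeness} and the conjugacy discussion following it apply to every root, and likewise every irreducible factor of $f$ has roots of order $\cgt\mfn_0$.

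For the upper estimate I would apply Lemma~\ref{lem.BoundExts} with $\mfn_0=(-1,\dots,-1)$: since $f$ is monic the vertex $((0,\dots,0),\deg_z(f))$ of its ``Newton Polygon'' lies on a non-trivial edge, and every edge equation is a subsum of $f$, so $\max\{\deg_z(\edge_{\cle,\mfn}(f))\mid\mfn\cgt\mfn_0\}=\deg_z(f)$ and therefore $\sum_i[\E_i:\E_0]\,\#(\Gamma_i/\Gamma_0)\le\deg_z(f)$. For the lower estimate I would count roots: by Lemma~\ref{lem.Completeness} every root of $f$ is an associated root of at least one $(\sigma_i,\alpha_i)$, while a single $(\sigma_i,\alpha_i)$ has at most $[\E_i:\E_0]\,\#(\Gamma_i/\Gamma_0)$ associated roots — the factor $\#(\Gamma_i/\Gamma_0)$ being the number of extensions of $\sigma_i^{-1}$ from $\Gamma_0$ to a character $\tau$ of $\Gamma_i$ with values in $\overline{\E_0}^\ast$ (using that $\overline{\E_0}^\ast$ is divisible), the factor $[\E_i:\E_0]$ coming from the $\E_0$-embeddings of the coefficient field $\E_i$ into $\overline{\E_0}$. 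This gives $\deg_z(f)\le\sum_i[\E_i:\E_0]\,\#(\Gamma_i/\Gamma_0)$, so both estimates are equalities; in particular each $(\sigma_i,\alpha_i)$ then has \emph{exactly} $[\E_i:\E_0]\,\#(\Gamma_i/\Gamma_0)$ associated roots and no root is associated to two different parametrizations, which is the first bullet.

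Next I would use the conjugacy observation stated just before the corollary: the set of roots associated to a fixed $(\sigma_i,\alpha_i)$ is stable under the Galois action fixing $\overline{\E_0}\ll\xu^{\Gamma_0}\rr$, hence is a union of full root-sets of irreducible factors of $f$; by the previous paragraph these unions are disjoint, exhaust all roots, and have cardinalities $[\E_i:\E_0]\,\#(\Gamma_i/\Gamma_0)$, so each is the root-set of exactly one irreducible factor $f_i$ with $\deg_z(f_i)=[\E_i:\E_0]\,\#(\Gamma_i/\Gamma_0)$, and distinct parametrizations give distinct factors. Together with the injection ``irreducible factors $\hookrightarrow$ parametrizations'' already established, this is a bijection. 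Finally, to see that $(\sigma_i,\alpha_i)$ is rational, let $g=f_i$ be the unique (by squarefreeness) irreducible factor with $\sigma_i\lift{z}(g)(\alpha_i)=0$ and consider the map $\psi:\E_0\ll\xu^{\Gamma_0}\rr[z]/\langle g\rangle\to\E_i\ll\xu^{\Gamma_i}\rr$, $z\mapsto\alpha_i$, $\gamma\mapsto\sigma_i(\gamma)$. The source is a domain ($g$ is prime by the ``Irreducible Monic Polynomials'' remark) and $\psi\neq 0$, hence $\psi$ is injective; the target $\E_i\ll\xu^{\Gamma_i}\rr$ is a normal domain, finite and free of rank $[\E_i:\E_0]\,\#(\Gamma_i/\Gamma_0)=\deg_z(g)$ over $\E_0\ll\xu^{\Gamma_0}\rr$ (via $\sigma_i$) and integral over the image of $\psi$; since the fraction field of the source also has degree $\deg_z(g)$ over $\totfrac(\E_0\ll\xu^{\Gamma_0}\rr)$, the two rings share a fraction field, so $\E_i\ll\xu^{\Gamma_i}\rr$ is the integral closure of the source and $\psi$ extends to an isomorphism $\ic(\E_0\ll\xu^{\Gamma_0}\rr[z]/\langle g\rangle)\cong\E_i\ll\xu^{\Gamma_i}\rr$, which is the rationality claim.

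The step I expect to be the main obstacle is the root-count in the second paragraph: one must pin down precisely that the number of roots of $f$ in $\overline{\E_0}\ll\xu^{\Q^n}\rr$ associated to $(\sigma_i,\alpha_i)$ equals $[\E_i:\E_0]\,\#(\Gamma_i/\Gamma_0)$, with the exponent-lattice contribution and the coefficient-field contribution multiplying cleanly, and that these root-sets are disjoint as $i$ varies — everything else being bookkeeping built on Lemmas~\ref{lem.BoundExts} and~\ref{lem.Completeness} and the conjugacy remark, together with the standard fact that a finite injection of normal domains with equal fraction fields is an isomorphism.
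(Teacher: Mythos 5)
Your proposal is correct in outline but takes a genuinely different route from the paper for the lower estimate, and that route has a subtle issue worth flagging. The paper deduces $\deg_z(f)\le\sum_i[\E_i:\E_0]\#(\Gamma_i/\Gamma_0)$ not by counting roots but by writing down, for each irreducible factor $f_j$ and its parametrization $(\sigma_{i_j},\alpha_{i_j})$, the chain of field inclusions $\totfrac(\E_0\ll\xu^{\Gamma_0}\rr)\subseteq\totfrac(\E_0\ll\xu^{\Gamma_0}\rr[z]/\langle f_j\rangle)\subseteq\totfrac(\E_{i_j}\ll\xu^{\Gamma_{i_j}}\rr)$ and comparing degrees; this is clean and only needs the injection ``factor $\hookrightarrow$ parametrization'' furnished by Lemma~\ref{lem.Completeness} plus the conjugacy remark. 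You instead count associated roots, and claim a fixed $(\sigma_i,\alpha_i)$ has at most $[\E_i:\E_0]\#(\Gamma_i/\Gamma_0)$ of them. Under the paper's literal definition an associated root is $\tau(\alpha_i)$ for a character $\tau\in\hom(\Gamma_i,\overline{\E_0}^*)$ extending $\sigma_i^{-1}$, which gives only the factor $\#(\Gamma_i/\Gamma_0)$; your extra factor $[\E_i:\E_0]$ from ``$\E_0$-embeddings of $\E_i$'' uses a broader notion of association (post-composing with a Galois embedding of the coefficient field). That broader notion is in fact the morally correct one --- with the literal definition, a parametrization whose coefficient field is a nontrivial extension does not have enough associated roots to cover a full Galois orbit, so Lemma~\ref{lem.Completeness} as stated should really be read modulo Galois conjugation --- and your argument does go through once you adopt it. The trade-off is: the paper's degree-chain argument sidesteps any need to pin down exactly which roots are ``associated,'' whereas your root count makes the combinatorics explicit (it is conceptually closer to the univariate Puiseux/Duval picture) at the cost of needing the Galois-equivariant form of ``associated root'' and a short check that the $[\E_i:\E_0]\cdot\#(\Gamma_i/\Gamma_0)$ choices give a multiplicatively consistent parameter count. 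Your closing argument for rationality (finite injection of normal domains with equal fraction fields, hence the target is the integral closure) is correct and in fact more explicit than the paper's terse ``equality of degrees must hold''.
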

\begin{proof}
To show this, we apply the above two lemmata with $\mfn_0 := (-1,\dots,-1)$ each. In Lemma~\ref{lem.BoundExts}, clearly, $\deg_z(f)$ is an upper bound for the righthand side of the inequality, so $\deg_z(f) \ge \sum_{i} [\E_i : \E_0]~\#(\Gamma_i / \Gamma_0)$.

Now let $f_j \vert f$ be the irreducible factors. By Lemma~\ref{lem.Completeness} and the previous discussion we may assume that there is an injection $j \mapsto i_j$ s.t.\ $f_j$ is parametrized by $(\sigma_{i_j}, \alpha_{i_j})$. Then we have chains of field inclusions \[\totfrac(\E_0\ll\xu^{\Gamma_0}\rr) \subseteq \totfrac(\E_0\ll\xu^{\Gamma_0}\rr[z]/\langle f_j \rangle) \subseteq \totfrac(\E_{i_j}\ll\xu^{\Gamma_{i_j}}\rr),\] where the second inclusion is given by $(\sigma_{i_j}, \alpha_{i_j})$. The extensions are algebraic of degrees $[\totfrac(\E_0\ll\xu^{\Gamma_0}\rr[z]/\langle f_j \rangle):\totfrac(\E_0\ll\xu^{\Gamma_0}\rr)]=\deg_z(f_j)$ respectively $[\totfrac(\E_{i_j}\ll\xu^{\Gamma_{i_j}}\rr):\totfrac(\E_0\ll\xu^{\Gamma_0}\rr)] = [\E_{i_j} : \E_0]~\#(\Gamma_{i_j} / \Gamma_0)$ and therefore \[\deg_z(f) = \sum_{j} \deg_z(f_j) \le \sum_{j} [\E_{i_j} : \E_0]~\#(\Gamma_{i_j} / \Gamma_0) \le \sum_{i} [\E_{i} : \E_0]~\#(\Gamma_{i} / \Gamma_0).\]

Together we have proven equality. In fact, since all summands are positive we find that the map $j \mapsto i_j$ is a bijection and for each chain of inclusions as above equality of degrees must hold. This gives the second statement.
\end{proof}


\section{Conclusion}\label{sec.Conclusion}

We have introduced the concept of formal desingularizations and shown how to compute them for hypersurfaces of $\P_{\E}^3$. The algorithm has been implemented and found to run very well. We can compute formal desingularizations faster by magnitudes than the general algorithms for desingularization. The reason is that our algorithm doesn't depend on Groebner basis computations. It relies only on linear algebra and polynomial factorization.

Here, we would like to point out an application of our algorithm. Namely, formal desingularizations can be used to compute adjoint sheaves. We will describe how to do that in another paper. Adjoint sheaves in arbitrary dimension are defined to be the direct image of the tensor powers of the canonic sheaf w.r.t.\ an arbitrary desingularization. Adjoint spaces for projective schemes ({\it i.e.}, graded components of the associated graded ring to that sheaf) are the keystone to the rational parametrization of curves. In \cite{JS:1998b} it has been shown that they are of equal importance for the computation of rational surface parametrizations. For example, they facilitate the computation of the arithmetic genus and the plurigenera of the surface. Thus, we can effectively check Castelnuovo's Criterion for the parametrizability of surfaces. Moreover, adjoint spaces can be used to construct certain rational maps that reduce the parametrization problem to a set of base cases. The final goal is an efficient implementation to rationally parametrize hypersurface in $\P_{\Q}^3$ (with or without introducing field extensions).

We finish with an open problem. With our current definition, a formal desingularization is just a loosely related set of formal prime divisors. For some applications, however, it would be nice to know the dual graph of the surface desingularization, {\it i.e.}, an annotated graph with one vertex for each exceptional divisor and edges whenever two divisors intersect. Such a graph could be the starting point for an algorithm to compute the minimal formal desingularization. In our method it would be easy to keep track of the dual graph of the embedded curve desingularization and in a certain sense the dual graph of the resolution projects to it. Such graph coverings have been studied in \cite{AN:2000, AN_AS:2000}. The problem is that we get formal prime divisors from two sources, namely, Algorithm~\ref{alg.DivisorsAboveCurve} and Algorithm~\ref{alg.DivisorsAboveCrossing}. However, it is not clear how the corresponding prime divisors intersect.

The algorithm of this paper as well as the method for the computation of adjoint spaces is available as a {\tt Magma}-package and can be downloaded via this link:\\
\centerline{\texttt{http://www.ricam.oeaw.ac.at/software/symcomp/adjoints.tar.gz}}
It will probably become part of the next major {\tt Magma}-release.

\section*{Acknowledgment}

I want to thank G{\'a}bor Bodn{\'a}r, Gavin Brown and my thesis advisor Josef Schicho for bringing up the subject. I am also grateful to Josef for many helpful suggestions.


\appendix
\section{Some Local Algebra}

In this appendix we gather a few results from local commutative algebra and present them in a form suitable for our needs. Again $\E$ denotes a field of characteristic zero which need not be algebraically closed. We are dealing with completions of stalks of regular schemes of finite type over $\E$. Therefore we first give a famous structure theorem in this setting. Recall that an \emph{essentially finite} local $\E$-algebra is the localization of a finitely generated $\E$-algebra at a prime ideal.

\begin{thm}[Cohen Structure Theorem]\label{thm.Cohen}
Let $(\widehat{A},\mathfrak{m})$ be the completion of an essentially finite, regular, local $\E$-algebra of Krull dimension $s$. Set $\F_0 := \widehat{A}/\mathfrak{m}$ with canonic projection $\pi: \widehat{A} \to \F_0$. Further let
\begin{itemize}
\item $\{u_i\}_{1 \le i \le r} \subset \widehat{A}$ be a set projecting to a transcendence basis of $\F_0$ over $\E$ and
\item
$\{v_j\}_{1 \le j \le s} \subset \mathfrak{m}$ a minimal set of generators.
\end{itemize}
Then there is a unique coefficient field $\F \subseteq \widehat{A}$ containing $\E$ and $\{u_i\}_{1 \le i \le r}$ s.t.\ $\pi$ restricts to an isomorphism $\F \to \F_0$ and
\begin{gather*}
\F\ll x_1,\dots,x_s\rr  \to \widehat{A}:
\begin{cases}
f \mapsto f & \text{for $f \in \F$},\\
x_j \mapsto v_j & \text{for $1 \le j \le s$}
\end{cases}
\end{gather*}
is also an isomorphism.
\end{thm}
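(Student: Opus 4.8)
The plan is to separate the statement into (i) the existence and uniqueness of the coefficient field $\F$ containing $\E$ and the $u_i$, and (ii) the assertion that the resulting power series map is an isomorphism; part (i) is the substance of the Cohen Structure Theorem in equal characteristic, part (ii) is graded-ring bookkeeping.

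For (i), I would first note that $\widehat A$ is equicharacteristic zero: the composite $\E\to A\to\widehat A$ is injective since $\E$ is a field, so $\E\subseteq\widehat A$. Next I would check that the $u_i$ are algebraically independent over $\E$ in $\widehat A$ --- a nonzero relation would lie in $\mathfrak m$, contradicting that the images $\bar u_i$ form a transcendence basis of $\F_0/\E$ --- and that, $\widehat A$ being local, every nonzero element of $\E[u_1,\dots,u_r]$ is outside $\mathfrak m$, hence a unit. Thus $K_0:=\E(u_1,\dots,u_r)$ embeds into $\widehat A$ and maps isomorphically onto $\E(\bar u_1,\dots,\bar u_r)\subseteq\F_0$. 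The extension $\F_0/K_0$ is algebraic, and separable because the characteristic is zero. Now I would lift it by Hensel's lemma: for each $\bar\theta\in\F_0$ with separable minimal polynomial $p\in K_0[T]$, the complete local ring $\widehat A$ contains a unique $\theta$ with $\theta\equiv\bar\theta\pmod{\mathfrak m}$ and $p(\theta)=0$. The set $\F$ of all such lifts is closed under the field operations (by uniqueness of Hensel lifts: the sum, product, and inverse of lifts is again the designated lift of the corresponding element of $\F_0$), it contains $\E$ and the $u_i$, and $\pi|_\F\colon\F\to\F_0$ is an isomorphism. For uniqueness I would observe that any coefficient field containing $\E$ and $\{u_i\}$ contains $K_0$ and, for each $\bar\theta$, a root of $p$ reducing to $\bar\theta$; by the uniqueness in Hensel's lemma that root is $\theta$, so the field contains $\F$, and equals $\F$ since both reduce isomorphically onto $\F_0$.

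For (ii), define $\Phi\colon\F\ll x_1,\dots,x_s\rr\to\widehat A$ by $f\mapsto f$ on $\F$ and $x_j\mapsto v_j$. Since $v_j\in\mathfrak m$, any power series in the $v_j$ converges $\mathfrak m$-adically, so $\Phi$ is a well-defined local homomorphism carrying $\mathfrak n:=(x_1,\dots,x_s)$ into $\mathfrak m$ and inducing $\pi|_\F$ on residue fields. Since $A$, hence $\widehat A$, is regular of Krull dimension $s$, its associated graded ring is a polynomial ring $\mathrm{gr}_{\mathfrak m}\widehat A\cong\F_0[y_1,\dots,y_s]$, with $y_j$ the class of $v_j$ in $\mathfrak m/\mathfrak m^2$ (the $v_j$ being minimal generators, they form an $\F_0$-basis of $\mathfrak m/\mathfrak m^2$, and regularity says there are no further relations). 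The induced graded map $\mathrm{gr}\,\Phi\colon\F[x_1,\dots,x_s]\to\mathrm{gr}_{\mathfrak m}\widehat A$ sends $x_j$ to $y_j$, is an isomorphism in degree zero, is surjective because $\mathrm{gr}_{\mathfrak m}\widehat A$ is generated in degree one, and is injective because its kernel is a prime of the domain $\F[x_1,\dots,x_s]$ with quotient of dimension $s$, hence of height zero, hence zero. Finally, both $\F\ll x_1,\dots,x_s\rr$ and $\widehat A$ being complete and separated for the respective adic filtrations and $\Phi$ being filtration-compatible, the bijectivity of $\mathrm{gr}\,\Phi$ forces $\Phi$ itself to be bijective, by the standard successive-approximation argument.

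The hard part will be (i): obtaining the Cohen lift in the \emph{prescribed} form. The transcendental direction is disposed of by hand through $K_0=\E(u_1,\dots,u_r)$, but lifting the possibly infinite separable algebraic extension $\F_0/K_0$, verifying that the set of Hensel lifts is genuinely a subfield, and pinning down uniqueness, is the real content; once regularity of $\widehat A$ is in hand, everything in (ii) is routine. An alternative for (i) would be to quote the textbook Cohen Structure Theorem for complete equicharacteristic local rings and then massage it into this form, but the self-contained Hensel argument above seems cleaner and makes the role of $\{u_i\}$ transparent.
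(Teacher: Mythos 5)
Your proof is correct, and for part (ii) it is essentially the same argument the paper uses, just spelled out a little more. The paper establishes surjectivity of the power-series map from the fact that $\{v_j\}$ generates $\mathfrak m$ (citing the graded-ring lemma in Atiyah--Macdonald 10.23) and then kills the kernel by a dimension count (citing 11.18); you package exactly these two facts at the level of $\mathrm{gr}_{\mathfrak m}\widehat A\cong\F_0[y_1,\dots,y_s]$ and then invoke the complete-and-separated principle. The one place where you genuinely depart from the paper is part (i): the paper simply cites Matsumura Thm.~28.3 (with $p$-bases replaced by transcendence bases in characteristic zero) for the existence and uniqueness of the coefficient field, whereas you build $\F$ by hand, adjoining $K_0=\E(u_1,\dots,u_r)$ and then taking the Hensel lift of the separable algebraic extension $\F_0/K_0$. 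Your route has the advantage of making the role of the prescribed data $\E$ and $\{u_i\}$ completely transparent and of being self-contained; the paper's citation is shorter. One remark on the Hensel step: to see that the set of lifts is closed under addition and multiplication, it is cleanest to reduce to finite subextensions via the primitive element theorem (the Hensel lift of a primitive element generates a subfield mapping isomorphically onto the corresponding finite subextension of $\F_0$, and uniqueness of Hensel lifts then identifies the lifts of $\bar\theta_1$, $\bar\theta_2$, $\bar\theta_1+\bar\theta_2$, $\bar\theta_1\bar\theta_2$ inside it); your phrase ``by uniqueness of Hensel lifts'' is the right idea but compresses this slightly. This is a routine point, not a gap.
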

\begin{proof}
The existence of a unique field $\F$ fulfilling the first assertion is the content of \cite[Thm.~28.3]{HM:1989} and its proof (where references to differential bases can be substituted by transcendence bases in characteristic zero). Since $\{v_j\}_{1 \le j \le s}$ is a set of generators for $\mathfrak{m}$ one easily sees that the homomorphism in the second assertion is surjective, {\it cf.} \cite[Lem.~10.23]{IM_MA:1969}. The rings on both sides have the same dimension. Therefore the kernel must be trivial by \cite[Cor.~11.18]{IM_MA:1969}.
\end{proof}

We actually need a constructive version of a kind of inverse of the above isomorphism in a special case.

\begin{cor}[Completion at Points in Affine $n$-Space]\label{cor.PtCompl}
Let $\mathfrak{p} := \langle f_1, \dots, f_r \rangle \subset \E[x_1,\dots,x_n]$ be a prime ideal of height $l \le  \min(r,n)$,
\begin{gather*}
\mathcal{J} := \frac{\partial (f_1, \dots, f_r)}{\partial (x_1, \dots, x_n)} \in \E[x_1,\dots,x_n]^{r \times n}
\end{gather*}
the Jacobian matrix, $\F_0 := \totfrac(\E[x_1,\dots,x_n]/\mathfrak{p})$ the residue field, $\pi: \E[x_1,\dots,x_n] \to \F_0$ the canonic projection and
\begin{gather*}
\phi: \E[x_1,\dots,x_n] \to \F_0[t_1,\dots,t_l]: x_i \mapsto \pi(x_i) + M_i\,(t_1,\dots,t_l)^T,
\end{gather*}
where $M_i$ are row vectors with entries in $\{0,1\}$. Write $M := (M_i)_{1 \le i \le n}$.

Then $\phi$ extends uniquely to a homomorphism $\widehat{\E[x_1,\dots,x_n]_{\mathfrak{p}}} \to \F_0\ll t_1,\dots,t_l\rr$. Moreover we can choose $M$ s.t.\ $\pi(\mathcal{J})\,M \in \F_0^{r \times l}$ has rank $l$ and then the extended homomorphism becomes an isomorphism.
\end{cor}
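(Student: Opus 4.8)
The plan is to prove the statement in three steps: first extend $\phi$ by a purely topological argument, then compute $\operatorname{rank}\pi(\mathcal{J})$ and exhibit a suitable $M$, and finally show that the rank condition forces $\widehat{\phi}$ to be an isomorphism.

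For the extension, write $\mathfrak{n} := \langle t_1,\dots,t_l\rangle \subset \F_0[t_1,\dots,t_l]$. Since $f_j \in \mathfrak{p}$, the constant term of $\phi(f_j)$ is $\pi(f_j) = 0$, so $\phi(\mathfrak{p}) \subseteq \mathfrak{n}$ and hence $\phi(\mathfrak{p}^k) \subseteq \mathfrak{n}^k$ for all $k$. Moreover, if $s \notin \mathfrak{p}$ then $\phi(s) \equiv \pi(s) \neq 0 \pmod{\mathfrak{n}}$, so $\phi(s)$ is a unit in the local ring $\F_0[t_1,\dots,t_l]_{\mathfrak{n}}$. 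Therefore $\phi$ first extends to a local homomorphism $\E[x_1,\dots,x_n]_{\mathfrak{p}} \to \F_0[t_1,\dots,t_l]_{\mathfrak{n}}$, continuous for the $\mathfrak{p}$-adic and $\mathfrak{n}$-adic filtrations, and then extends uniquely to the completions, giving $\widehat{\phi}\colon \widehat{\E[x_1,\dots,x_n]_{\mathfrak{p}}} \to \F_0\ll t_1,\dots,t_l\rr$. Uniqueness holds because any homomorphism extending $\phi$ must send $\mathfrak{m}_A = \mathfrak{p}A$ into $\mathfrak{m}_B$ (the $f_j$ generate $\mathfrak{p}A$ and map into $\mathfrak{n}$), hence is local and continuous, and $\E[x_1,\dots,x_n]_{\mathfrak{p}}$ is dense in its completion.

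Next, $\E[x_1,\dots,x_n]_{\mathfrak{p}}$ is a regular local ring of Krull dimension $l = \operatorname{ht}(\mathfrak{p})$ with residue field $\F_0$, so by Theorem~\ref{thm.Cohen} its completion $A := \widehat{\E[x_1,\dots,x_n]_{\mathfrak{p}}}$ is isomorphic to $\F\ll y_1,\dots,y_l\rr$ for a coefficient field $\F \cong \F_0$; in particular $A$ is a domain of dimension $l$, and $\mathfrak{m}_A/\mathfrak{m}_A^2 \cong (\mathfrak{p}/\mathfrak{p}^2)\otimes_{\E[x_1,\dots,x_n]/\mathfrak{p}}\F_0$ has $\F_0$-dimension $l$. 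Now, in the right-exact cotangent sequence
\[
(\mathfrak{p}/\mathfrak{p}^2)\otimes\F_0 \xrightarrow{\ \delta\ } \Omega_{\E[x_1,\dots,x_n]/\E}\otimes\F_0 \longrightarrow \Omega_{\F_0/\E} \longrightarrow 0,
\]
the identification $\Omega_{\E[x_1,\dots,x_n]/\E}\otimes\F_0 \cong \F_0^{\,n}$ via the $dx_i$ sends the spanning classes $\overline{f_j}$ to the rows of $\pi(\mathcal{J})$, so $\operatorname{rank}\pi(\mathcal{J}) = \dim_{\F_0}\operatorname{im}\delta = n - \dim_{\F_0}\Omega_{\F_0/\E}$. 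Since $\E$ has characteristic zero, $\F_0/\E$ is separably generated, whence $\dim_{\F_0}\Omega_{\F_0/\E} = \operatorname{trdeg}_{\E}\F_0 = \dim(\E[x_1,\dots,x_n]/\mathfrak{p}) = n - l$; therefore $\operatorname{rank}\pi(\mathcal{J}) = l$. Choosing $l$ columns of $\pi(\mathcal{J})$ that are $\F_0$-linearly independent and letting $M$ be the $n\times l$ zero-one matrix selecting exactly those columns yields $\pi(\mathcal{J})M$ of rank $l$.

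Finally, Taylor expansion in the $t_k$ gives, using $\pi(f_j)=0$,
\[
\phi(f_j) \equiv \sum_{k=1}^{l}(\pi(\mathcal{J})M)_{jk}\,t_k \pmod{\mathfrak{n}^2},
\]
so the cotangent map $\mathfrak{m}_A/\mathfrak{m}_A^2 \to \mathfrak{n}/\mathfrak{n}^2$ induced by $\widehat{\phi}$ carries the spanning set $\{\overline{f_j}\}$ onto the rows of $\pi(\mathcal{J})M$. If $\pi(\mathcal{J})M$ has rank $l$, these rows span the $l$-dimensional space $\mathfrak{n}/\mathfrak{n}^2$, so the cotangent map is surjective; since $\F_0\ll t_1,\dots,t_l\rr$ is complete with associated graded ring generated in degree one over $\F_0$, $\widehat{\phi}$ is surjective. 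Its kernel $I$ then satisfies $\dim(A/I) = l = \dim A$, and as $A$ is a domain this forces $I = 0$, so $\widehat{\phi}$ is an isomorphism (consistently, it induces the identity on $\F_0$, because $\phi(x_i)\equiv\pi(x_i)\pmod{\mathfrak{n}}$ and the $\pi(x_i)$ generate $\F_0$ over $\E$). I expect the only genuinely delicate point to be the identification $\operatorname{rank}\pi(\mathcal{J}) = l$, i.e.\ pinning down the image of the conormal map and invoking the characteristic-zero computation of $\dim_{\F_0}\Omega_{\F_0/\E}$; the rest is bookkeeping with adic topologies and the dimension theory of complete local rings.
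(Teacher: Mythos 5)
Your argument is correct and its overall shape matches the paper's: extend $\phi$ by continuity to the completions, show $\pi(\mathcal{J})$ has rank $l$, select $M$ as a column-picking matrix, and deduce that $\widehat{\phi}$ is an isomorphism from surjectivity plus a dimension count. The differences are in the two middle steps and are worth recording. Where the paper simply invokes regularity of $\E[x_1,\dots,x_n]$ to assert $\operatorname{rank}\pi(\mathcal{J}) = l$ (the Jacobian criterion), you derive it from the conormal exact sequence together with the characteristic-zero identity $\dim_{\F_0}\Omega_{\F_0/\E} = \operatorname{trdeg}_{\E}\F_0 = n-l$; this gives a self-contained proof of what the paper only cites. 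For surjectivity, the paper transports a Cohen coefficient field along $\widehat{\phi}$ and reruns the surjectivity argument from Theorem~\ref{thm.Cohen}, whereas you argue directly on associated graded rings; both are valid and comparable in effort. One small presentational caveat: the associated-graded criterion requires both that the cotangent map $\mathfrak{m}_A/\mathfrak{m}_A^2 \to \mathfrak{n}/\mathfrak{n}^2$ is onto \emph{and} that the induced residue-field map $A/\mathfrak{m}_A \to \F_0$ is onto. You do verify the latter (it is the identity on $\F_0$), but you phrase it as an after-the-fact ``consistency check'' when it is in fact a needed hypothesis that should logically precede the conclusion that $\widehat{\phi}$ is surjective. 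Your injectivity argument --- the kernel is a prime $I$ in the domain $A$ with $\dim(A/I) = l = \dim A$, forcing $I = 0$ --- is the explicit version of the paper's ``comparing dimensions.''
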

\begin{proof}
The homomorphism $\phi$ trivially extends to $\F_0\ll t_1,\dots,t_l\rr$. Then for any $g \in \E[x_1,\dots,x_n]$ one computes for the image
\begin{gather*}
\phi(g) \in \pi(g) + \pi(\partial g/\partial (x_1, \dots, x_n))M(t_1,\dots,t_l)^T + \mathfrak{m}^2
\end{gather*}
where $\mathfrak{m} := \langle t_1, \dots, t_l \rangle$. Now the above equation shows that if $g \in \E[x_1,\dots,x_n] \setminus \mathfrak{p}$ then we have $\pi(g) \neq 0$ for the constant part. Therefore the image is a unit and the homomorphism lifts uniquely to $\E[x_1,\dots,x_n]_{\mathfrak{p}}$. Impose the $\mathfrak{p}$-adic topology on the domain and the $\mathfrak{m}$-adic topology on the codomain. Writing down the long expansion one sees that the homomorphism is even uniformly continuous. Since the codomain is already complete we have a unique lifting to $\widehat{\E[x_1,\dots,x_n]_{\mathfrak{p}}}$.

Since $\E[x_1,\dots,x_n]$ is regular the Jacobian image $\pi(\mathcal{J})$ has rank $l$ equal to the height of the prime. Let $(j_1,\dots,j_l)$ be the column indices of a non-vanishing $l \times l$-minor and choose $M := (e_{j_1}, \dots, e_{j_l})$. Here $e_j$ denotes the column vector with $1$ in position $j$ and $0$ otherwise. Since multiplication by $M$ extracts exactly the columns of this minor also $\pi(\mathcal{J})M$ has rank $l$.

Applying the above formula to the components of the vector $(f_1,\dots,f_r)^T$ of generators of $\mathfrak{p}$ we find that its $\pi$-image is of the form \[\underbrace{(\pi(f_1),\dots,\pi(f_r))^T}_{=(0,\dots,0)} + \underbrace{\pi(\mathcal{J})M(t_1,\dots,t_l)^T}_{=: L(f_1,\dots,f_r)} + \text{ (higher order terms).}\] The rank condition assures that the components of $L(f_1,\dots,f_r)$ generate $\mathfrak{m}/\mathfrak{m}^2$ as $\F_0$-vector space. Hence $\mathfrak{m} = \langle \phi(f_1), \dots, \phi(f_r) \rangle$ by Nakayama's lemma.

Now let $\F \subseteq \widehat{\E[x_1,\dots,x_n]_{\mathfrak{p}}}$ be a coefficient field in the sense of Theorem~\ref{thm.Cohen}., {\it i.e.}, restricting the canonic projection to $\F$ gives an isomorphism \[\F \cong \widehat{\E[x_1,\dots,x_n]_{\mathfrak{p}}}/\mathfrak{p}\widehat{\E[x_1,\dots,x_n]_{\mathfrak{p}}} \cong \F_0.\] But then also $\phi(\F)$ is a coefficient field of $\F_0\ll t_1,\dots,t_l\rr$. Now we proceed as in the proof of Theorem~\ref{thm.Cohen}. The paragraph above shows that $\phi$ is surjective and comparing dimensions one proves injectivity.
\end{proof}

To show correctness of our algorithms we need that completion commutes with two common operations, namely, building the integral closure and computing the blow up algebra.

\begin{lem}[Integral Closure and Completion]\label{lem.NormCompl}
Let $A$ be a finitely generated $\E$-algebra, $f \in A[z]$ a monic polynomial, $\mathfrak{p} \subset A$ a prime ideal and consider the following diagram:\\
\centerline{\xymatrix{
\intclos{A[z]/\langle f \rangle} \ar[r] & \intclos{\widehat{A_{\mathfrak{p}}}[z]/\langle f \rangle} \\
A \ar[r] \ar[u] & \widehat{A_{\mathfrak{p}}} \ar[u]
}}

Then $\mathfrak{q} \mapsto \mathfrak{q}' := \mathfrak{q} \intclos{\widehat{A_{\mathfrak{p}}}[z]/\langle f \rangle}$ gives a bijective correspondence between prime ideals $\mathfrak{q} \subset \intclos{A[z]/\langle f \rangle}$ above $\mathfrak{p}$ and prime ideals $\mathfrak{q}' \subset \intclos{\widehat{A_{\mathfrak{p}}}[z]/\langle f \rangle}$ above $\mathfrak{p}\widehat{A_{\mathfrak{p}}}$, and the induced homomorphisms $\widehat{\intclos{A[z]/\langle f \rangle}_{\mathfrak{q}}} \to \widehat{\intclos{\widehat{A_{\mathfrak{p}}}[z]/\langle f \rangle}_{\mathfrak{q}'}}$ are isomorphisms.
\end{lem}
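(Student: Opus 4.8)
The plan is to reduce to the case that $A$ is local with maximal ideal $\mathfrak{p}$ and then identify $\intclos{\widehat{A_{\mathfrak p}}[z]/\langle f\rangle}$ with the completion of $C:=\intclos{A[z]/\langle f\rangle}$, the decisive input being that finitely generated $\E$-algebras are excellent.

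First I would localize. Since forming the integral closure commutes with localization at a multiplicative set, $C_{\mathfrak p}\cong\intclos{A_{\mathfrak p}[z]/\langle f\rangle}$; the primes $\mathfrak{q}$ of $C$ over $\mathfrak{p}$ are exactly the primes of $C_{\mathfrak p}$ over $\mathfrak{p}A_{\mathfrak p}$, and $\widehat{C_{\mathfrak q}}$ depends only on $C_{\mathfrak p}$. So I may assume $A$ is local with maximal ideal $\mathfrak{p}$. Put $B:=A[z]/\langle f\rangle$; as $f$ is monic this is a free $A$-module, so $\widehat{A}\otimes_A B=\widehat{A}[z]/\langle f\rangle$ is precisely the $\mathfrak{p}$-adic completion $\widehat B$ of $B$. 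Both $A$ and $B$ are essentially of finite type over $\E$, hence excellent, and reduced (here I use that $f$ is squarefree, as in all applications; otherwise replace $B$ by $B_{\mathrm{red}}$), so $C$ is module-finite over $B$.

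The crux is to show $\intclos{\widehat B}=\widehat C$. Since $B$ is local, $C$ is semilocal; its maximal ideals $\mathfrak{q}_1,\dots,\mathfrak{q}_n$ are exactly the primes of $C$ over $\mathfrak{p}$ (as $C$ is integral over the local ring $A$), and $\widehat C=\prod_i\widehat{C_{\mathfrak{q}_i}}$, the completion being $\mathfrak{p}$-adic, i.e.\ with respect to the Jacobson radical $\mathfrak{q}_1\cdots\mathfrak{q}_n$. Each $C_{\mathfrak{q}_i}$ is an excellent normal local domain, so each $\widehat{C_{\mathfrak{q}_i}}$ is a normal domain — this is the decisive use of excellence, see \cite{HM:1989} — and therefore $\widehat C$ is a normal ring. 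On the other hand, applying the flat base change $B\to\widehat B$ to the inclusion $B\hookrightarrow C$ shows that $\widehat B\hookrightarrow\widehat C$ is module-finite, and birational because $\widehat C/\widehat B=(C/B)\otimes_B\widehat B$ is killed by the conductor, which contains a non-zerodivisor of $B$ and hence of $\widehat B$. A module-finite, birational, normal extension of a reduced ring equals its integral closure in the total quotient ring, whence $\widehat C=\intclos{\widehat B}$.

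It then remains to read off the two assertions. Under the identification $\intclos{\widehat B}=\widehat C=\prod_i\widehat{C_{\mathfrak{q}_i}}$ the maximal ideals are the $n$ coordinate ideals, and a Chinese Remainder Theorem computation identifies the $i$-th one with $\mathfrak{q}_i\widehat C=\mathfrak{q}_i\intclos{\widehat B}$; this is the asserted bijection $\mathfrak{q}\mapsto\mathfrak{q}'$. Localizing $\widehat C=\prod_i\widehat{C_{\mathfrak{q}_i}}$ at its $i$-th maximal ideal $\mathfrak{q}_i'$ recovers the already complete local ring $\widehat{C_{\mathfrak{q}_i}}$, so the induced map $\widehat{C_{\mathfrak{q}_i}}\to\widehat{\intclos{\widehat B}_{\mathfrak{q}_i'}}$ is an isomorphism, as required. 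I expect the main obstacle to be the normality of $\widehat C$ — everything else is bookkeeping with localizations and the Chinese Remainder Theorem — and this is exactly where excellence of finitely generated $\E$-algebras is indispensable.
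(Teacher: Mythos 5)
Your proof is correct, and it is genuinely more detailed than the paper's, which simply cites \cite[Thm.~VIII.33]{OZ_PS:1960} (there described as ``Zariski's Main Theorem'') for the statement that integral closure commutes with completion. You instead give a self-contained derivation via the modern theory of excellent rings: localize, identify $\widehat{A}\otimes_A B$ with the $\mathfrak{p}$-adic completion $\widehat{B}$ using freeness and finiteness of $B$ over $A$, establish $\intclos{\widehat{B}}=\widehat{C}$ by showing $\widehat{C}$ is a module-finite, birational, normal extension of the reduced ring $\widehat{B}$, and read off the bijection and isomorphisms via the product decomposition of the semilocal completion. The decisive step --- normality of $\widehat{C_{\mathfrak{q}_i}}$ --- indeed hinges on excellence of essentially-finite-type $\E$-algebras, which is precisely the structural reason Zariski's analytic normality theorem holds; so the two approaches are mathematically the same fact packaged differently, but yours makes the mechanism visible where the paper is content with a literature pointer. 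Two small remarks: (i) to conclude $\totfrac(\widehat{B})=\totfrac(\widehat{C})$ from the conductor argument you should note explicitly that the conductor of $C$ in $B$ extends to the conductor of $\widehat{C}$ in $\widehat{B}$ by flatness, which you essentially do; (ii) you implicitly need $\widehat{B}$ reduced, which again follows from excellence (analytic reducedness of the local rings of $B$), and is worth stating since you invoke it when speaking of the integral closure in the total quotient ring. Neither gap is serious; the argument stands.
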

\begin{proof}
The fact that building the integral closure and completion commutes is known as Zariski's Main Theorem, see, {\it e.g.}, \cite[Thm.\ VIII.33]{OZ_PS:1960}.
\end{proof}

\begin{lem}[Blowing up and Completion]\label{lem.BlowCompl}
Let $A$ be a finitely generated $\E$-algebra, $\mathfrak{p} \subset A$ a prime ideal and consider the diagram\\
\centerline{\xymatrix{
B \ar[r] & B' := \widehat{A_{\mathfrak{p}}} \otimes_A B \\
A \ar[r] \ar[u] & \widehat{A_{\mathfrak{p}}} \ar[u]
}}
where $B$ is a coordinate ring of an affine chart of the blow up of $\spec A$ at $\mathfrak{p}$.

Then $B'$ is a coordinate ring of an affine chart of the blow up of $\spec \widehat{A_{\mathfrak{p}}}$ at $\mathfrak{m}\widehat{A_{\mathfrak{p}}}$. Further $\mathfrak{q} \mapsto \mathfrak{q}' := \mathfrak{q} B'$ gives a bijective correspondence between prime ideals $\mathfrak{q} \subset B$ above $\mathfrak{p}$ and prime ideals $\mathfrak{q}' \subset B'$ above $\mathfrak{m}\widehat{A_{\mathfrak{p}}}$, and the induced homomorphisms $\widehat{B_{\mathfrak{q}}} \to \widehat{B'_{\mathfrak{q}'}}$ are isomorphisms.
\end{lem}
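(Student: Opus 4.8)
The plan is to reduce to the case where $A$ is local and then to exploit one elementary fact about completions, namely that $\widehat{R}/\mathfrak{n}^n\widehat{R}\cong R/\mathfrak{n}^n$ for every $n$, where $(R,\mathfrak{n})$ is Noetherian local and $\widehat{R}$ its $\mathfrak{n}$-adic completion. First I would note that replacing $A$ by $A_\mathfrak{p}$ changes nothing relevant: $B_\mathfrak{p}:=(A\setminus\mathfrak{p})^{-1}B$ is the coordinate ring of the corresponding affine chart of the blow up of $\spec A_\mathfrak{p}$ at its closed point, one has $\widehat{A_\mathfrak{p}}\otimes_A B=\widehat{A_\mathfrak{p}}\otimes_{A_\mathfrak{p}}B_\mathfrak{p}$, the primes of $B$ lying over $\mathfrak{p}$ are exactly the primes of $B_\mathfrak{p}$ lying over $\mathfrak{m}:=\mathfrak{p}A_\mathfrak{p}$, and localization and completion at such a prime are unaffected. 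So from now on I may assume $(A,\mathfrak{m})$ is Noetherian local with $\mathfrak{m}=\mathfrak{p}$, write $\widehat{A}$ for the $\mathfrak{m}$-adic completion, set $B':=\widehat{A}\otimes_A B$, and for a prime $\mathfrak{q}\subset B$ over $\mathfrak{m}$ put $\mathfrak{q}':=\mathfrak{q}B'$.

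For the first assertion I would invoke the compatibility of blow up with flat base change. Writing $\mathfrak{m}=\langle g_1,\dots,g_r\rangle$, the ring $B$ is a standard affine chart of the blow up, obtained from the Rees algebra $\bigoplus_{n\ge 0}\mathfrak{m}^n$ by localizing at $g_i$ (for a suitable $i$) and taking the degree-zero part. Since $\widehat{A}$ is flat over $A$ we have $\widehat{A}\otimes_A\mathfrak{m}^n=(\mathfrak{m}\widehat{A})^n$, so $\widehat{A}\otimes_A\bigl(\bigoplus_n\mathfrak{m}^n\bigr)$ is the Rees algebra of $\widehat{A}$ along $\mathfrak{m}\widehat{A}$; as base change commutes with localization and with taking degree-zero parts, $B'$ is the corresponding chart of the blow up of $\spec\widehat{A}$ at $\mathfrak{m}\widehat{A}$. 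This is standard and I would only cite it.

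The remaining assertions rest on the observation that for any $A$-module $M$ annihilated by $\mathfrak{m}^n$ one has $\widehat{A}\otimes_A M=(\widehat{A}/\mathfrak{m}^n\widehat{A})\otimes_{A/\mathfrak{m}^n}M=(A/\mathfrak{m}^n)\otimes_{A/\mathfrak{m}^n}M=M$. Applied to $M:=B/\mathfrak{q}^n$ — which is killed by $\mathfrak{m}^n$ because $\mathfrak{m}\subseteq\mathfrak{q}$ — this yields canonical isomorphisms $B'/(\mathfrak{q}')^n=B'/\mathfrak{q}^nB'=\widehat{A}\otimes_A(B/\mathfrak{q}^n)\cong B/\mathfrak{q}^n$, induced by $B\to B'$, for every $n\ge 1$. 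For $n=1$ this says $B'/\mathfrak{q}B'\cong B/\mathfrak{q}$, a domain, so $\mathfrak{q}B'$ is prime; it clearly contains $\mathfrak{m}B'$, hence lies over the maximal ideal $\mathfrak{m}\widehat{A}$, and $B/\mathfrak{q}\hookrightarrow B'/\mathfrak{q}B'$ being an isomorphism gives $\mathfrak{q}'\cap B=\mathfrak{q}$ and $\kappa(\mathfrak{q}')=\kappa(\mathfrak{q})$. Because $B\to B'$ is flat with $B'/\mathfrak{m}B'=B/\mathfrak{m}B$, the fibres $\spec(B/\mathfrak{m}B)$ and $\spec(B'/\mathfrak{m}B')$ are canonically identified; since $\mathfrak{q}B'$ contracts to the image of $\mathfrak{q}$ in this common fibre ring, $\mathfrak{q}\mapsto\mathfrak{q}B'$ is exactly the bijection between primes of $B$ over $\mathfrak{m}$ and primes of $B'$ over $\mathfrak{m}\widehat{A}$ furnished by that identification.

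Finally, for the statement on completions I would localize the isomorphisms $B/\mathfrak{q}^n\cong B'/(\mathfrak{q}')^n$ above: each carries $\mathfrak{q}/\mathfrak{q}^n$ onto $\mathfrak{q}B'/\mathfrak{q}^nB'=\mathfrak{q}'/(\mathfrak{q}')^n$, so it identifies $B_\mathfrak{q}/\mathfrak{q}^nB_\mathfrak{q}=(B/\mathfrak{q}^n)_\mathfrak{q}$ with $B'_{\mathfrak{q}'}/(\mathfrak{q}')^nB'_{\mathfrak{q}'}=(B'/(\mathfrak{q}')^n)_{\mathfrak{q}'}$, compatibly as $n$ varies. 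Passing to the inverse limit gives $\widehat{B_\mathfrak{q}}\cong\widehat{B'_{\mathfrak{q}'}}$, and by construction this is the map induced by $B_\mathfrak{q}\to B'_{\mathfrak{q}'}$. The only genuinely non-formal ingredient is the compatibility of the blow up with flat base change used for the first assertion; everything else is bookkeeping, and I expect the main (organizational) difficulty to be keeping straight, throughout the reduction to the local case, in which ring each of the ideals $\mathfrak{q},\mathfrak{q}',\mathfrak{m},\mathfrak{m}\widehat{A}$ is being considered.
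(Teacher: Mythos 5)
Your argument is correct and complete, but it takes a genuinely different route from the paper: the paper's proof of this lemma is a one-line citation (``algebraic transcription of \cite[Prop.\ A.14.7]{KK_JLV:2004b}, which holds analogously for non-algebraically-closed ground fields''), whereas you give a self-contained elementary proof. Your reduction to the local Noetherian case, the use of flat base change with the Rees algebra to identify $B'$ as a chart, and especially the workhorse observation that $\widehat{A}\otimes_A M\cong M$ for any $A$-module $M$ killed by a power of $\mathfrak{m}$ (yielding $B'/(\mathfrak{q}')^n\cong B/\mathfrak{q}^n$ for all $n$, hence the bijection on primes, the equality of residue fields, and, after localizing and passing to the inverse limit, the isomorphism of completions) are all correct. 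Two small points worth spelling out if this were written up: the chain $B'/(\mathfrak{q}')^n\cong B/\mathfrak{q}^n$ requires noting that $(\mathfrak{q}')^n=(\mathfrak{q}B')^n=\mathfrak{q}^nB'$, and the claim that $\mathfrak{q}\mapsto\mathfrak{q}B'$ realizes the fibre-ring bijection is most cleanly finished by observing that any prime $\mathfrak{q}''\subset B'$ over $\mathfrak{m}\widehat{A}$ and the already-identified prime $(\mathfrak{q}''\cap B)B'$ have the same image in the common fibre ring $B/\mathfrak{m}B\cong B'/\mathfrak{m}B'$, hence coincide. What the paper's citation buys is brevity and the geometric viewpoint of the reference; what your argument buys is a transparent, purely commutative-algebraic proof that makes explicit exactly which pieces (flatness of completion, the Rees algebra description, the Artinian-quotient trick) are actually used and hence makes clear why the hypothesis ``ground field not algebraically closed'' plays no role.
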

\begin{proof}
This is an algebraic transcription of \cite[Prop.\ A.14.7]{KK_JLV:2004b} which holds analogously for ground fields which are not algebraically closed.
\end{proof}

Finally we state the theorem which provides the theoretical basis for the whole formal desingularization procedure.

\begin{thm}[Theorem of Jung-Abhyankar]\label{thm.JungAbhyank}
Let $f \in \E\ll x_1,\dots,x_n\rr [z]$ be a \emph{monic}, \emph{squarefree} polynomial s.t.\ $\disc_z(f) = x_1^{e_1} \cdots x_m^{e_m} u(x_1,\dots,x_n)$ where $m \le n$ and $u(0,\dots,0) \neq 0$. Then there is a natural number $d \ge 1$ and there are $\deg_z(f)$ distinct power series $\alpha_i \in \overline{\E}\ll x_1^{1/d},\dots,x_m^{1/d},x_{m+1},\dots,x_{n}\rr$ solving $f$, {\it i.e.}, $f(x_1, \dots, x_n, \alpha_i)=0$ for $1 \le i \le \deg_z(f)$.
\end{thm}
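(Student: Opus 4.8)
The plan is to reduce the statement to the classical Abhyankar--Jung theorem over an algebraically closed field and then invoke Abhyankar's Lemma; this is the substance of the original work of Jung and of Abhyankar, and I will only indicate the steps. First I would base change along $\E \into \overline{\E}$: this leaves $\disc_z(f)$ unchanged, so the monomial shape of the discriminant persists, and it suffices to exhibit $\deg_z(f)$ roots in $\overline{\E}\ll x_1^{1/d},\dots,x_m^{1/d},x_{m+1},\dots,x_n\rr$ for a suitable $d$. So assume $\E = \overline{\E}$. Write $R := \E\ll x_1,\dots,x_n\rr$, let $D := V(x_1 \cdots x_m) \subset \spec R$ (a normal crossings divisor, since $x_1,\dots,x_m$ are part of a regular system of parameters of the regular local ring $R$), and set $B := R[z]/\langle f\rangle$. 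Because $\disc_z(f) = x_1^{e_1}\cdots x_m^{e_m} u$ with $u(0,\dots,0)\neq 0$, the factor $u$ is a unit, $B$ is reduced (as $f$ is squarefree), and the finite morphism $\spec B \to \spec R$ is étale over $\spec R \setminus D$; equivalently, the normalization $\tilde S$ of $B$ is finite over $R$, normal, and étale over $R$ away from $D$.

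Next I would apply Abhyankar's Lemma to $\tilde S / R$ along $D$ --- legitimate because in characteristic zero every finite cover is automatically tamely ramified: there is an integer $e \ge 1$ such that, putting $R_e := \overline{\E}\ll x_1^{1/e},\dots,x_m^{1/e},x_{m+1},\dots,x_n\rr$ (a formal power series ring in $n$ variables over $\overline{\E}$, finite free over $R$), the normalization of $R_e[z]/\langle f\rangle$ is \emph{finite étale} over $R_e$. Since $R_e$ is a complete local ring with algebraically closed residue field, it is strictly Henselian, so every finite étale $R_e$-algebra is a finite product of copies of $R_e$; hence this normalization is isomorphic to $R_e^{k}$, and comparing ranks over $\totfrac(R_e)$ --- using again that $f$ is squarefree over $R_e$, so $R_e[z]/\langle f\rangle$ is reduced with total fraction ring of $\totfrac(R_e)$-dimension $\deg_z(f)$ --- forces $k = \deg_z(f)$. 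The composite $R_e[z]/\langle f\rangle \into R_e^{\deg_z(f)}$ is injective because its source is reduced, and reading off the image of $z$ produces $\alpha_1,\dots,\alpha_{\deg_z(f)} \in R_e$ with $f(\alpha_i)=0$; they are pairwise distinct since $\prod_{i<j}(\alpha_i-\alpha_j)^2 = \disc_z(f) \neq 0$ in the domain $R_e$. Taking $d := e$ completes the argument.

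The main obstacle is Abhyankar's Lemma itself --- equivalently, the Kummer-theoretic description of the (tame) fundamental group of the complement of a normal crossings divisor in a regular local scheme; everything else in the plan is formal. A more elementary but considerably more laborious alternative, closer in spirit to the algorithm of Section~\ref{sec.Jung}, is to induct on $m$ and on $\deg_z(f)$ and run the Newton--Puiseux process in one of the variables $x_1,\dots,x_m$: there the delicate point is to bound the ramification (both the common denominator $d$ and the degree of the field extension needed to solve the successive edge equations) using precisely the monomial shape of the discriminant, which is what keeps the fractional power-series solutions inside a power series ring rather than merely inside its field of fractions.
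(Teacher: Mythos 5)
Your sketch is correct and, up to the heavy input of Abhyankar's Lemma (which you rightly flag as the crux), fills in all the formal steps. It takes a genuinely different route from the paper. The paper's proof is two lines: it reduces to the case of irreducible $f$ --- a monic irreducible factor $g$ of $f$ has $\disc_z(g) \mid \disc_z(f)$, hence is again quasi-ordinary since $\E\ll\xu\rr$ is a UFD --- and then cites Kiyek--Vicente, Prop.~3.2.5, so essentially all the work is delegated to a reference. Your sketch instead runs the modern argument: pass to $\overline{\E}$ (harmless even though $\overline{\E}\ll\xu\rr$ strictly contains $\E\ll\xu\rr \otimes_\E \overline{\E}$ when $\overline{\E}/\E$ is infinite, because $f$ has only finitely many power-series coefficients and the discriminant and its factorization are unchanged), kill the automatically tame ramification over the normal crossings divisor $V(x_1\cdots x_m)$ by a Kummer cover $R_e/R$ via Abhyankar's Lemma, use that $R_e$ is a strictly Henselian local ring so the normalization of $R_e[z]/\langle f\rangle$ is a product of copies of $R_e$, and count ranks over $\totfrac(R_e)$ --- legitimate since $\disc_z(f)$ stays a nonzero element of the domain $R_e$, so $R_e[z]/\langle f\rangle$ stays reduced, as you note --- to obtain exactly $\deg_z(f)$ distinct roots. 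This handles reducible $f$ in one stroke rather than via a separate reduction, and it makes the role of characteristic zero (tameness) transparent; the price is that Abhyankar's Lemma over a regular local ring along an SNC divisor is itself a substantial theorem, comparable in depth to the citation the paper uses.
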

\begin{proof}
An irreducible factor of $f$ must again be monic and its discriminant must be a factor of $\disc_z(f)$, hence, can also be written as above. Now the statement can be found in \cite[Prop.\ 3.2.5]{KK_JLV:2004b}.
\end{proof}


\bibliographystyle{amsplain}
\bibliography{bibfile}

\end{document}